\DeclareMathOperator{\quot}{\mathsf{Quot}}
\newtheorem{theorem}{Theorem}
\newtheorem{lemma}{Lemma}
\newtheorem{corollary}{Corollary}
\newtheorem{proposition}{Proposition}
\theoremstyle{definition}
\theoremstyle{definition}
\newtheorem{remark}{Remark}
\theoremstyle{definition}
\newtheorem{definition}{Definition} 
\newtheoremstyle{TheoremNum}
        {7pt}{7pt}              
        {\itshape}                      
        {}                              
        {\bfseries}                     
        {.}                             
        { }                             
        {\thmname{#1}\thmnote{ \bfseries #3}}
    \theoremstyle{TheoremNum}
\newcommand{\BA}{\mathbb{A}}
\newcommand{\BC}{\mathbb{C}}
\newcommand{\BN}{\mathbb{N}}
\newcommand{\BP}{\mathbb{P}}
\newcommand{\BQ}{\mathbb{Q}}
\newcommand{\CE}{\mathcal{E}}
\newcommand{\CG}{\mathcal{G}}
\newcommand{\CL}{\mathcal{L}}
\newcommand{\CO}{\mathcal{O}}
\newcommand{\tE}{\widetilde{E}}
\newcommand{\tZ}{\widetilde{Z}}
\newcommand{\tCL}{\widetilde{\CL}}
\newcommand{\tlambda}{\widetilde{\lambda}}
\newcommand{\fY}{\mathsf{Y}}
\begin{document}

\parindent=30pt

\baselineskip=17pt
\title[The cohomology of the Quot scheme on a curve as a Yangian rep]{The cohomology of the Quot scheme on a smooth curve as a Yangian representation}

\author{Alina Marian and Andrei Negu\cb{t}}

\begin{abstract}

We describe the action of the shifted Yangian of $\mathfrak{sl}_2$ on the cohomology groups of the Quot schemes of 0-dimensional quotients on a smooth projective curve. We introduce a commuting family of $r$ operators in the positive half of the Yangian, whose action yields a natural basis of the Quot cohomology. These commuting operators further lead to formulas for the operators of multiplication by the Segre classes of the universal bundle. 

\end{abstract}

\address{The Abdus Salam International Centre for Theoretical Physics, Strada Costiera 11, 34151 Trieste, Italy \newline \text{ } \qquad Department of Mathematics, Northeastern University, 360 Huntington Avenue, Boston, MA 02115}
\email{amarian@ictp.it}

\vskip.2in

\address{École Polytechnique Fédérale de Lausanne (EPFL), Lausanne, Switzerland \newline \text{ } \qquad Simion Stoilow Institute of Mathematics (IMAR), Bucharest, Romania}

\email{andrei.negut@gmail.com}

\date{December 2024}

\vskip.2in

\maketitle

\vskip.4in

\section{Introduction}

\vskip.3in

\subsection{Quot schemes over curves}

Fix $V \to C$ a rank $r$ locally free sheaf over a smooth complex projective curve $C$. We consider the Grothendieck Quot scheme $\quot_{d}(V)$ parameterizing rank $0$ degree $d$ quotients of $V$: 
$$
0\to E\to V \to F\to 0,\quad \text{rank }F=0, \quad \text{deg }F=d.
$$
We will denote by $\pi, \rho$ the projections from the product $\quot_d(V) \times C$ to the two factors. In any setting, the Quot scheme carries a universal short exact sequence
$$
0 \to \mathcal E \to \rho^* (V) \to \mathcal F \to 0 \, \, \, \text{on}  \, \, \, \quot_d (V) \times C,
$$ 
and its deformation-obstruction complex is $\text{Ext}^\bullet_\pi (\mathcal E, \, \mathcal F)$. In the situation considered here, when the quotients $F$ are supported at finitely many points of $C$, it is therefore clear that $\quot_d(V)$ is a smooth projective variety of dimension $rd$. 

The present paper is concerned with a representation-theoretic interpretation of the {\it cohomology} (with $\mathbb{Q}$ coefficients) of the Quot scheme $\quot_d(V)$. On the one hand, our study explains and recovers the Betti number series  summed over all quotient degrees, paralleling in the curve setting the classic result for the Hilbert scheme of points over a smooth surface (cf. \cite{gottsche}, \cite{nakajima}, \cite{grojnowski}). On the other hand, it connects to the well-studied case of the equivariant cohomology of Quot schemes over $\mathbb A^1$ (the so-called Laumon spaces), realized as the universal Verma module for the {\it shifted Yangian} of $\mathfrak{sl}_2$ (cf.\cite{bffr, fffr}). We extend the Yangian action to the case of Quot schemes over an arbitrary smooth projective curve.

\subsection{A commuting family of operators}

A basic geometric object in our analysis is the nested Quot scheme 
$$\quot_{d, d+1} (V) \subset \quot_d(V) \times \quot_{d+1}(V),$$
\begin{equation}
  \quot_{d, d+1}(V) = \{ (E \overset{\iota} {\hookrightarrow} V, \, \, \, \, E' \overset{\iota'}\hookrightarrow V)\, \,  \text{with } E'\overset{\kappa} \hookrightarrow E \text{ and }  \iota' = \iota \circ \kappa\}.
\end{equation}
It is endowed with maps
 \begin{equation}
\label{eqn:diagram zk}
\xymatrix{& \quot_{d,d+1}(V) \ar[ld]_{p_-} \ar[d]^{p_C} \ar[rd]^{p_+} & \\ \quot_{d}(V) & C & \quot_{d+1}(V)}
\end{equation}
which remember $E \hookrightarrow V,$ the support point of $E/{E'},$ and $E' \hookrightarrow V$ respectively. The nested Quot scheme can be viewed as parametrizing points $E \subset V$ in $\quot_d (V)$, along with non-zero morphisms $E \to \mathbb C_x$ for $x \in C$. 
We thus have the isomorphism 
 \begin{equation}
 \label{eqn:quot proj 1}
 \xymatrix{\quot_{d, d+1} (V) \ar[r]^{\cong} \ar[rd]_-{p_{-}  \times \, p_C} & \mathbb P_{\quot_d (V) \times C}  \, (\mathcal E) \ar[d] \\  &\quot_d \times \,C}
  \end{equation}
where the right-hand side is the projective bundle of one-dimensional quotients of the universal subsheaf $\mathcal E \to \quot_d (V) \times C.$ We note that $\mathcal E$ is a locally free sheaf of rank $r$. We consider the tautological sequence
\begin{equation}
\label{secondexact}
0 \to \mathcal G \to (p_{-}  \times p_C)^* (\mathcal E) \to \mathcal L \to 0 \, \, \, \text{on} \, \, \, \mathbb P_{\quot_d (V) \times C} (\mathcal E),
\end{equation}
where $\mathcal L$ is the hyperplane line bundle of the projectivization, and the kernel $\mathcal G$ is a locally free sheaf of rank $r-1$. We write
$$
c_1 (\mathcal L) =\lambda,
$$ 
and consider the creation/annihilation operators
\begin{equation}
\label{eqn:formula e intro}
e_k = (p_+ \times p_C)_* ( \lambda^k \cdot p_-^* ) : H^*(\quot_d(V)) \rightarrow H^*(\quot_{d+1}(V) \times C),
\end{equation}
\begin{equation}
\label{eqn:formula f intro}
f_k = (p_{-} \times p_C)_* ( \lambda^k \cdot p_+^* ) : H^*(\quot_{d+1}(V)) \rightarrow H^*(\quot_{d}(V) \times C).
\end{equation}
for all $k,d \geq 0$. Operators closely related to $e_0$ and $f_0$ were studied in the symmetric product context in \cite{polishchuk}. It is also natural to use the Chern classes of $\mathcal G$ in order to define the operators
\begin{equation}
\label{eqn:formula a intro}
a_k = (p_+ \times p_C)_* \Big( c_k(\mathcal{G}) \cdot p_-^* \Big) : H^*(\quot_d(V)) \rightarrow H^*(\quot_{d+1}(V) \times C).
\end{equation}
for $k \in \{0,\dots,r-1\}$ and all $d \geq 0$. Our first result is

\begin{proposition}
\label{prop:commute a intro}

For any $k,k' \in \{0,\dots,r-1\}$ and $d \geq 0$, we have
\begin{equation}
\label{eqn:commute a intro} 
a_k  a_{k'} = a_{k'} a_k
\end{equation}
as operators $H^*(\quot_d(V)) \rightarrow H^*(\quot_{d+2}(V) \times C \times C)$, with the operator denoted by $a_k$ (respectively $a_{k'}$) acting on both sides in the first (respectively second) factor of $C \times C$.

\end{proposition}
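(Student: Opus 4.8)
The plan is to compute both composite operators as push--pull through a single geometric object and then to exploit a symmetry. The object is the doubly-nested Quot scheme
$$
Z := \quot_{d,d+1,d+2}(V) = \{\, E'' \hookrightarrow E' \hookrightarrow E \hookrightarrow V \ :\ \mathrm{length}(E/E')=\mathrm{length}(E'/E'')=1 \,\}.
$$
First I would identify $Z$ with the fiber product $\quot_{d,d+1}(V)\times_{\quot_{d+1}(V)}\quot_{d+1,d+2}(V)$, formed along $p_+$ on the first factor and $p_-$ on the second. Iterating the projective-bundle description \eqref{eqn:quot proj 1} exhibits this fiber product as a $\mathbb P^{r-1}$-bundle over $\quot_{d,d+1}(V)\times C$; in particular $Z$ is smooth of dimension $rd+2r$, which is the expected dimension of the fiber product, so the two factors are Tor-independent over $\quot_{d+1}(V)$ and cohomological base change holds with no correction terms. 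Writing $\mathcal G_1$ for the rank $r-1$ kernel bundle of the first projectivization (built from $\mathcal E_d$ at the support point $s_1$ of $E/E'$), $\mathcal G_2$ for the rank $r-1$ kernel bundle of the second (built from $\mathcal E_{d+1}$ at the support point $s_2$ of $E'/E''$), $\Phi\colon Z\to\quot_{d+2}(V)\times C\times C$ for the map $(E''\subset E'\subset E)\mapsto(E'',s_1,s_2)$, $\pi\colon Z\to\quot_d(V)$ for the map remembering $E$, and $\sigma$ for the involution swapping the two copies of $C$, base change through the fiber square gives
$$
a_{k'}a_k = \Phi_*\big(c_k(\mathcal G_1)\,c_{k'}(\mathcal G_2)\cdot\pi^*(-)\big),\qquad a_k a_{k'} = (\sigma\circ\Phi)_*\big(c_{k'}(\mathcal G_1)\,c_k(\mathcal G_2)\cdot\pi^*(-)\big).
$$

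The two right-hand sides become visibly equal once one has an involution $\tau$ of $Z$ reversing the two steps, i.e. with $\Phi\circ\tau=\sigma\circ\Phi$, $\pi\circ\tau=\pi$, and $\tau^*\mathcal G_1\cong\mathcal G_2$, $\tau^*\mathcal G_2\cong\mathcal G_1$: the projection formula together with $\tau_*=\tau^*$ then immediately finishes the proof. On the open locus $U\subset Z$ where $s_1\ne s_2$ such a $\tau$ is transparent --- there $E/E''\cong\mathbb C_{s_1}\oplus\mathbb C_{s_2}$, the flag is determined by $(E,E'')$ together with an ordering of the two support points, and $\tau$ reverses the ordering. The required identities on $U$ are immediate using that $E=E'$ near $s_2$ and $E'=E''$ near $s_1$; the latter also explains why on $U$ one may write $\mathcal G_2=\ker(\mathcal E_d|_{s_2}\twoheadrightarrow\mathbb C_{s_2})$, a symmetric counterpart of $\mathcal G_1=\ker(\mathcal E_d|_{s_1}\twoheadrightarrow\mathbb C_{s_1})$.

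The main obstacle is that $\tau$ does \emph{not} extend to a morphism of all of $Z$: over the locus where $E/E''$ is killed by the maximal ideal at a single point, the fiber of the forgetful map $Z\to\quot_{d,d+2}(V)$ is the $\mathbb P^1$ of lines in the two-dimensional sheaf $E/E''$, and this $\mathbb P^1$ carries no canonical involution (the two ``coordinate lines'' have no well-defined limit as the support points collide). I would handle this in one of two ways. Most directly, by a local computation: near a support point $p\in C$ the relevant model is the variety of successive colength-one submodules $E''\subset E'\subset \mathcal O_{\widehat C,p}^{\,r}$, on which one checks by hand that $\Phi_*\big(c_k(\mathcal G_1)c_{k'}(\mathcal G_2)\big)$ is symmetric in $k\leftrightarrow k'$ up to the swap $\sigma$. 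Alternatively, one compares $Z$ with the symmetric model $\widetilde Z:=\mathbb P_{\quot_d(V)\times C}(\mathcal E_d)\times_{\quot_d(V)}\mathbb P_{\quot_d(V)\times C}(\mathcal E_d)$ over $\quot_d(V)\times C\times C$, which does carry the obvious involution swapping the two factors (and hence the two tautological rank $r-1$ bundles) together with $\sigma$, and which is birational to $Z$ via an isomorphism over $\{s_1\ne s_2\}$: using $c(\mathcal G_1)=c(\mathcal E_d|_{s_1})/(1+\lambda_1)$ exactly, the exact sequence $0\to\mathcal E_{d+1}\to\mathcal E_d\to(\text{line bundle on the graph of }s_1)\to 0$ shows that the only discrepancy between $c(\mathcal G_2)$ and the symmetric expression $c(\mathcal E_d|_{s_2})/(1+\lambda_2)$ is supported over the ($\sigma$-invariant) diagonal $\{s_1=s_2\}$, where it can be matched after pushing forward. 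In either approach I expect the bookkeeping of these diagonal-supported correction terms, via the Grothendieck relation for the iterated projective bundle, to be the only genuinely technical point. The case $r=1$ is trivial, since then $a_0$ is the only operator; $r=2$ is a useful warm-up.
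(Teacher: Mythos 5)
Your reduction to the two-step nested Quot scheme $\quot_{d,d+1,d+2}$ and the recognition that the natural involution fails over the locus $\{s_1 = s_2\}$ are both correct, but the proposal stops precisely at the point where the real work begins. You correctly diagnose the problem: the fiber of $\quot_{d,d+1,d+2}\to\quot_{d,d+2}$ over the locus where $E/E''\cong\mathbb C_x^{\oplus 2}$ is a $\mathbb P^1$ with no canonical involution. But neither of your two proposed remedies is carried out. The ``local computation near a support point'' is a promise, not an argument; and for the ``symmetric model'' $\widetilde Z$, you yourself flag the ``bookkeeping of diagonal-supported correction terms'' as the genuinely technical point and then leave it open. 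Since the entire content of the proposition lives along that diagonal (away from it, the statement is as trivial as you say), the proposal has a genuine gap rather than a mere loose end.

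The paper's proof also passes through $\quot_{d,d+1,d+2}$ and the generically $2\!:\!1$ forgetful map $\tau\colon\quot_{d,d+1,d+2}\to\quot_{d,d+2}$, but it does \emph{not} try to build an involution at all; instead it exploits the fact that $\quot_{d,d+2}$ has already symmetrized the two steps, and so computing the \emph{pushforward} under $\tau$ is enough. Concretely, using $c(\mathcal G_i,z)\cdot\Delta_i = c(\mathcal E^{(i)},z) - c(\mathcal E^{(i-1)},z)$ (where $\mathcal E^{(0)},\mathcal E^{(1)},\mathcal E^{(2)}$ are the three nested universal bundles) to rewrite $c_k(\mathcal G_1)c_{k'}(\mathcal G_2)$, and the pushforward identity $(\tau\times 1_C)_*\bigl[c(\mathcal E',z)-c(\mathcal E,z)\bigr]=c(\mathcal E'',z)-c(\mathcal E,z)$ of Lemma~\ref{lemma:pushtau}, the difference between the two compositions collapses algebraically on $\quot_{d,d+2}\times C\times C$ with no residual diagonal terms to track. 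This is exactly the replacement for the missing involution that your proposal is searching for: rather than making the $\mathbb P^1$-fibers over the diagonal into a $\mathbb Z/2$-equivariant object, one integrates them out. To close your gap you would either have to carry out that computation (at which point you have reproduced the paper's proof) or execute in full one of your two stated alternatives — for instance, the paper's Appendix does supply a blow-up model $\fY_{d,d+1,d+2}$ with a genuine involution, but using it requires the non-trivial Proposition~\ref{prop:blow up} and identities such as \eqref{eqn:divisors on y} and \eqref{eqn:divisors on y squared}, none of which you establish.
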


\subsection{A natural basis for the cohomology}

As a consequence of Proposition \ref{prop:commute a intro}, we note that products such as
$$
a_{k_1} \dots a_{k_n} : H^*(\quot_d(V)) \rightarrow H^*(\quot_{d+n}(V) \times C^n)
$$
are independent of the order of $k_1,\dots,k_n \in \mathbb{N}$ (it is implied that if one permutes $a_{k_i}$ and $a_{k_j}$ for some $i\neq j$, one should also permute the $i$-th and $j$-th factors of $C$ in the product $C^n$). Therefore, we may assume that $k_1 \geq \dots \geq k_n$, and define 
$$
a_{k_1} \dots a_{k_n}(\gamma) : H^*(\quot_d(V)) \rightarrow H^*(\quot_{d+n}(V))
$$
for any $\gamma \in H^*(C^n)$ to be the composition
\begin{multline}
\label{eqn:a gamma intro}
H^*(\quot_d(V)) \xrightarrow{a_{k_1} \dots a_{k_n}} H^*(\quot_{d+n}(V) \times C^n) \xrightarrow{\cdot \rho^*(\gamma)} \\ \longrightarrow H^*(\quot_{d+n}(V) \times C^n) \xrightarrow{\pi_*} H^*(\quot_{d+n}(V))
\end{multline}
where $\pi, \, \rho : \quot_d(V) \times C^n \rightarrow \quot_d(V),\, C^n$ denote the two projections. Let $|0 \rangle$ denote the fundamental class of $\quot_0(V) = \text{point}$. We show

\begin{theorem}
\label{theorem:fock basis}

For any $d \geq 0$, we have
\begin{equation}
\label{eqn:fock basis}
H^*(\quot_d(V)) = \bigoplus^{r > k_1 \geq \dots \geq k_d \geq 0}_{\gamma \,\in\, \,  \BQ\text{-basis of } H^*(C^d)_\Sigma} \mathbb{Q} \cdot a_{k_1} \dots a_{k_d}(\gamma)|0\rangle
\end{equation}
where $H^*(C^d)_\Sigma$ denotes the space of coinvariants in $H^*(C^d)$ under the permutation of the $i$-th and $j$-th factors of $C^d$ for any $i$ and $j$ such that $k_i = k_j$.

\end{theorem}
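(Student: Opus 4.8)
The plan is to prove \eqref{eqn:fock basis} in two steps: first that the classes $a_{k_1} \dots a_{k_d}(\gamma)|0\rangle$ span $H^*(\quot_d(V))$, and then that they are linearly independent. For spanning I would induct on $d$, the case $d = 0$ being trivial. In the inductive step, \eqref{eqn:quot proj 1} (with $d$ replaced by $d-1$) identifies $\quot_{d-1,d}(V)$ with $\mathbb{P}_{\quot_{d-1}(V) \times C}(\CE)$, so by Leray--Hirsch $H^*(\quot_{d-1,d}(V))$ is a free module over $H^*(\quot_{d-1}(V)) \otimes H^*(C)$ on $1, \lambda, \dots, \lambda^{r-1}$; since each $c_k(\CG)$ equals $(-\lambda)^k$ plus terms of strictly smaller $\lambda$-degree (from \eqref{secondexact}), the classes $c_0(\CG), \dots, c_{r-1}(\CG)$ form an equally good module basis. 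Moreover $p_+ \colon \quot_{d-1,d}(V) \to \quot_d(V)$ is a surjective morphism of smooth projective varieties of the same dimension $rd$, so $(p_+)_* (p_+)^* = (\deg p_+) \cdot \mathrm{id}$ with $\deg p_+ \neq 0$, and therefore $(p_+)_*$ is surjective on rational cohomology. Unwinding the definitions, $(p_+)_*\big(c_k(\CG) \cdot (p_- \times p_C)^*(\alpha \boxtimes \eta)\big)$ is exactly the class obtained from $\alpha \in H^*(\quot_{d-1}(V))$ by applying $a_k$, multiplying by the pullback of $\eta \in H^*(C)$ from the new factor, and integrating that factor out; feeding the inductive description of $H^*(\quot_{d-1}(V))$ into these generators shows that the classes $a_{k_1} \dots a_{k_d}(\gamma)|0\rangle$ — over all orderings of $k_1, \dots, k_d \in \{0,\dots,r-1\}$ and all $\gamma \in H^*(C^d)$ — span $H^*(\quot_d(V))$. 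Finally, Proposition \ref{prop:commute a intro} lets one assume $k_1 \geq \dots \geq k_d$; and since it shows that $a_{k_1}\dots a_{k_d}(\cdot)|0\rangle$ is invariant under permuting the factors of $C^d$ indexed by equal $k_i$, this operator factors through $H^*(C^d)_\Sigma$, so one may restrict $\gamma$ to a $\BQ$-basis of the coinvariants.

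For linear independence, the plan is to compute the Poincaré pairings of the spanning classes and show their Gram matrix is block-triangular with invertible blocks. Transposing the correspondence that defines $a_k$ gives an operator $a_k^\vee = (p_- \times p_C)_*\big(c_k(\CG) \cdot p_+^*\big)$ which is adjoint to $a_k$ under Poincaré duality (pushforward and pullback being mutually adjoint, with the copies of $C$ carried along); hence $\big\langle a_{k_1}\dots a_{k_d}(\gamma)|0\rangle,\, a_{l_1}\dots a_{l_d}(\delta)|0\rangle\big\rangle$ reduces, after pairing $\gamma$ and $\delta$ over the relevant copies of $C$, to a vacuum matrix element of $a^\vee_{k_d}\cdots a^\vee_{k_1}\, a_{l_1}\cdots a_{l_d}$. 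Because $\quot_{-1}(V) = \varnothing$, each $a_k^\vee$ kills $|0\rangle$, so on normal-ordering — commuting every $a^\vee$ to the right past every $a$ — only fully-contracted terms survive. The commutator $[a_k^\vee, a_l]$ I would compute by excess intersection, comparing the two iterated correspondences $\quot_{d-1,d}(V) \times_{\quot_{d-1}(V)} \quot_{d-1,d}(V)$ and $\quot_{d,d+1}(V) \times_{\quot_{d+1}(V)} \quot_{d,d+1}(V)$; I expect the answer to be proportional to $\delta_{k,l}$ times the class of the diagonal of $C \times C$ (the analogue, for the whole family of $a$'s, of the Heisenberg relation satisfied by $e_0 = a_0$). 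Then the pairing vanishes unless the multisets $\{k_1,\dots,k_d\}$ and $\{l_1,\dots,l_d\}$ agree, and when they do it becomes, up to a nonzero scalar, a sum over matchings equal to the perfect Poincaré pairing of $H^*(C^d)_\Sigma$ with itself. This gives the desired block-triangularity, so the spanning classes are a basis; counting them then recovers the Betti numbers of $\quot_d(V)$.

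The crux is the final input: pinning down the commutator $[a_k^\vee, a_l]$ exactly — in particular verifying it is diagonal in $(k,l)$ and fixing its normalization — from the intersection theory of the nested Quot schemes, together with careful tracking of signs and of the several copies of $C$ so that the normal-ordering really does collapse onto the vacuum. (If one imports the Betti number generating series of $\quot_d(V)$ from the literature, linear independence follows from spanning by a dimension count; the self-contained argument goes through the pairing computation above.)
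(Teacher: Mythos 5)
Your spanning argument is correct but takes a genuinely different route from the paper's. You use the projective‐bundle structure $\quot_{d-1,d} \cong \mathbb{P}_{\quot_{d-1}\times C}(\CE)$ together with surjectivity of $(p_+)_*$ (which holds because $p_+$ is a generically finite proper map between smooth projective varieties of the same dimension), and then the triangular change of basis from $\{\lambda^k\}$ to $\{c_k(\CG)\}$. The paper instead starts from the fact that $H^*(\quot_d)$ is spanned by universal classes (Remark~\ref{remark:diagonal}), converts multiplication operators $m_k$ into sums of $a_i f_j$ via Proposition~\ref{prop:mult intro}, and then pushes all $f$'s to the vacuum using Proposition~\ref{prop:commute intro}. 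Your version is perhaps the more direct one geometrically; the paper's is the one that sets up precisely the machinery it will then reuse for linear independence.

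The linear independence argument has a genuine gap: the expectation that $[a_k^\vee, a_l]$ is proportional to $\delta_{k,l}$ times the diagonal class is false whenever $r > 1$. Indeed $a_0 = e_0$ (since $c_0(\CG)=1$), so $a_0^\vee = f_0$, and Proposition~\ref{prop:commute intro} with $i=j=0$ gives $[f_0,a_0] = \delta\cdot\chi_{0,r-1} = 0$ for $r > 1$: there is no Heisenberg contraction on the diagonal. (Concretely on $\quot_1 = \mathbb{P}_C(V)$: $a_0(\gamma)|0\rangle = \rho^*(\gamma)$ and $\int_{\mathbb{P}_C(V)}\rho^*(\gamma)\rho^*(\delta) = 0$ because $\rho$ has positive-dimensional fibers, so the entire $(0,0)$ block of your Gram matrix vanishes.) The true contraction pattern is anti-diagonal, $k + l = r-1$: the leading Kronecker term of $[f_j,a_i]$ in \eqref{eqn:comm a and f intro} is $\delta\cdot\chi_{i+j,r-1}(-1)^i$. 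This is exactly why Proposition~\ref{prop:linearly independent} hits $a_{i_1}\cdots a_{i_d}(\gamma)|0\rangle$ with $f_{r-1-i_d}\cdots f_{r-1-i_1}(\phi)$ — \emph{complementary} indices — and organizes the normal ordering by the decreasing-$(i+j)$ triangularity in \eqref{eqn:comm a and f intro}, together with a lexicographic ordering, rather than by any diagonal orthogonality. Since your $a_k^\vee$'s are a unitriangular combination of the $f_j$'s (via $a^\vee(z) = m(z)f(z)|_\Delta$), you could in principle run the same bookkeeping with them, but the commutator is simply not the diagonal one your proposal relies on. Your fallback of counting dimensions against the known Poincaré polynomial does yield linear independence, but it imports \eqref{eqn:poincare} as external input, whereas the paper proves Theorem~\ref{theorem:fock basis} first and derives \eqref{eqn:poincare} as a corollary.
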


Taking the direct sum of \eqref{eqn:fock basis} over all $d \geq 0$ allows us to recover the well-known formula for the Poincar\'{e} polynomials of Quot schemes of rank zero quotients over curves:
$$
P_{\quot(V)}(t,z) = \sum_{d=0}^{\infty} t^d P_{\quot_d(V)}(z) = \sum_{d=0}^{\infty} t^d \sum_{k=0}^{2rd} z^k \dim_{\mathbb{Q}}(H^k(\quot_d(V))) = 
$$
\begin{equation}
\label{eqn:poincare}
= \prod_{i=0}^{r-1} \frac{(1 + t z^{2i+1})^{2g}}{(1-t z^{2i}) (1- t z^{2i+2})}.
\end{equation}
Formula \eqref{eqn:poincare} was established in \cite{stromme}, \cite{bgl}, \cite{chen}, and recovered by a motive calculation in \cite{bfp}, \cite{ricolfi}. Theorem \ref{theorem:fock basis} gives a representation-theoretic interpretation of this formula (mirroring the analysis for Hilbert schemes of surfaces of \cite{grojnowski, nakajima}). 

\subsection{Multiplication operators}

To prove Theorem \ref{theorem:fock basis}, we establish two intersection-theoretic results, which are neatly packaged in terms of the operators $\{a_k,f_k\}_{k \in \{0 \dots, r-1\}}$. The first of these results is a formula for the  operators of multiplication by Segre classes of the universal sheaf following those developed in \cite{lehn} for Hilbert schemes of surfaces and in \cite{negut3} for moduli spaces of higher rank stable sheaves on surfaces. 

\begin{proposition}
\label{prop:mult intro}

For any $k > 0, d \geq 0$, the operator $H^*(\quot_d(V)) \rightarrow H^*(\quot_d(V) \times C)$ of multiplication with the class
\begin{equation}
\label{eqn:classes intro}
c_k( (V-\mathcal{E}) \otimes \mathcal{K}_C^{-1})
\end{equation}
is given by
\begin{equation}
\label{eqn:equality coefficients intro}
\sum_{i=0}^{r-1} a_i f_{r+k-i-2} \Big|_\Delta   (-1)^{i-k}
\end{equation}
Here the composition $a_i f_{r+k-i-2}$ is an operator $H^*(\quot_d(V)) \to H^*(\quot_d(V) \times C \times C)$, and $|_\Delta$ means restricting the target to the diagonal $\Delta : C \hookrightarrow C \times C$.
    
\end{proposition}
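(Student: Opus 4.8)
The plan is to realize both operators as push--pull along explicit correspondences and to match them by an intersection-theoretic computation on a self-fibered product of the nested Quot scheme, following the template of \cite{lehn} and \cite{negut3}.

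\emph{Setting up the composed correspondence.} Forced by the bidegrees, I read $a_i f_{r+k-i-2}$ as the annihilation operator $f_{r+k-i-2}\colon H^*(\quot_d(V))\to H^*(\quot_{d-1}(V)\times C)$ followed by the creation operator $a_i\colon H^*(\quot_{d-1}(V))\to H^*(\quot_d(V)\times C)$ acting in the $\quot_{d-1}(V)$-slot; both are built from $\quot_{d-1,d}(V)\cong\BP_{\quot_{d-1}(V)\times C}(\mathcal{E})$. Composing these correspondences over $\quot_{d-1}(V)$ and then restricting the two curve factors to the diagonal replaces the composite by the single correspondence
\[
W\ :=\ \BP_{\quot_{d-1}(V)\times C}(\mathcal{E})\ \times_{\,\quot_{d-1}(V)\times C\,}\ \BP_{\quot_{d-1}(V)\times C}(\mathcal{E}),
\]
the fiberwise self-product of our $\BP^{r-1}$-bundle; this slicing is transverse, of the expected codimension one, so it introduces no excess term. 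Over $W$ the operator $\bigl(a_i f_{r+k-i-2}\bigr)\big|_\Delta$ is pull--multiply--push carrying the class $c_i(\mathcal{G}_1)\,\lambda_2^{\,r+k-i-2}$, with subscripts $1,2$ labelling the two factors, the source $\quot_d(V)$ reached through the second $p_+$ and the target $\quot_d(V)\times C$ through the first $p_+\times p_C$.

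\emph{The collapse of the characteristic class.} The variety $W$ contains the diagonal copy $\Delta W\cong\quot_{d-1,d}(V)$ of codimension $r-1$, and $\Delta W$ is exactly the vanishing locus, of the expected dimension, of the tautological morphism $\mathcal{G}_1\hookrightarrow\mathcal{E}\twoheadrightarrow\mathcal{L}_2$, which is zero precisely where the two one-dimensional quotients of $\mathcal{E}$ coincide; hence $[\Delta W]=c_{r-1}(\mathcal{G}_1^\vee\otimes\mathcal{L}_2)$. The splitting principle gives the universal identity $\sum_{i=0}^{r-1}(-1)^i c_i(\mathcal{G}_1)\lambda_2^{\,r-1-i}=c_{r-1}(\mathcal{G}_1^\vee\otimes\mathcal{L}_2)$, so that
\[
\sum_{i=0}^{r-1}(-1)^{i-k}\,c_i(\mathcal{G}_1)\,\lambda_2^{\,r+k-i-2}\ =\ (-1)^{-k}\,\lambda_2^{\,k-1}\,[\Delta W].
\]
Thus the class carrying the composite is supported on $\Delta W$: the off-diagonal part of $W$ contributes nothing, and the whole operator $\sum_i(-1)^{i-k}a_i f_{r+k-i-2}\big|_\Delta$ equals $(-1)^{-k}$ times multiplication by $(p_+\times p_C)_*\bigl(\lambda^{\,k-1}\bigr)\in H^*(\quot_d(V)\times C)$, i.e.\ by $e_{k-1}$ applied to the fundamental class of $\quot_{d-1}(V)$.

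\emph{Identifying the push-forward with the Chern class.} It remains to prove
\[
(p_+\times p_C)_*\bigl(\lambda^{\,k-1}\bigr)\ =\ \pm\,c_k\bigl((V-\mathcal{E})\otimes\mathcal{K}_C^{-1}\bigr)\qquad\text{on }\ \quot_d(V)\times C,
\]
which is where the twist by $\mathcal{K}_C^{-1}$ intervenes. On $\quot_{d-1,d}(V)\times C$ one has the defining sequence $0\to\mathcal{E}_d\to(p_-\times\mathrm{id})^*\mathcal{E}_{d-1}\to(\Delta_{p_C})_*\mathcal{L}\to 0$, with $\Delta_{p_C}$ the graph of $p_C$ --- a regular embedding of codimension one whose normal bundle is $p_C^*T_C=p_C^*\mathcal{K}_C^{-1}$. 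Taking derived restriction along $\Delta_{p_C}$ yields the $K$-theory relation
\[
(p_+\times p_C)^*(V-\mathcal{E}_d)\ =\ (p_-\times p_C)^*(V-\mathcal{E}_{d-1})\ +\ \mathcal{L}\ -\ \mathcal{L}\otimes p_C^*\mathcal{K}_C,
\]
and tensoring by $\mathcal{K}_C^{-1}$ makes the normal-bundle factor cancel; a Grothendieck--Riemann--Roch-type bookkeeping, anchored at the base case $d=1$ (where $\quot_{0,1}(V)\cong\BP_C(V)$ and $\mathcal{F}_1$ is literally the push-forward of a line bundle along the graph of $p_C$), then produces the stated identity. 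The step I expect to be the main obstacle is precisely this last one: the map $p_+\times p_C$ is not a projective bundle and drops dimension by one, so controlling $(p_+\times p_C)_*(\lambda^{k-1})$ and tracking all signs and normalizations coherently through to $c_k\bigl((V-\mathcal{E})\otimes\mathcal{K}_C^{-1}\bigr)$ requires care.
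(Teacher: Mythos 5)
Your proposal is essentially the paper's proof, run in the reverse direction. The geometric core — the fiber-product correspondence $W=\mathfrak{Z}_d$ of \eqref{eqn:def z}, the identification of its diagonal $\Delta W\cong\quot_{d-1,d}$ as the zero locus of the tautological composite $\mathcal{G}_1\hookrightarrow\mathcal{E}\twoheadrightarrow\mathcal{L}_2$ (this is exactly Lemma \ref{lem:diag}), and the resulting algebraic collapse $\sum_{i=0}^{r-1}(-1)^i c_i(\mathcal{G}_1)\lambda_2^{r-1-i}=c(\mathcal{G}_1,\lambda_2)=[\Delta W]$ — is precisely what the paper uses. The paper starts from the class $\tfrac{1}{z-\lambda}$ on $\quot_{d-1,d}$ and pushes it forward two ways (via $\iota$ into $\mathfrak{Z}_d$ to produce $[a(z)f(z)|_\Delta]_{z^{<0}}$, and via $p_+\times p_C$ to produce the multiplication operator), whereas you start from the operator expression, recognize that it is carried by a class supported on $\Delta W$, and contract back to $\lambda^{k-1}$ on $\quot_{d-1,d}$; these are the same computation read in opposite directions, and your steps up to this point are correct.

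The step you flag as the main obstacle — evaluating $(p_+\times p_C)_*(\lambda^{k-1})$ — is indeed the remaining input, but your GRR-plus-base-case sketch does not actually carry it out, and your remark that ``the map $p_+\times p_C$ is not a projective bundle'' is the one substantive misconception in the proposal. Lemma \ref{lemma:projectivizations} shows that $p_+\times p_C$ \emph{does} realize $\quot_{d-1,d}$ as the projectivization $\mathbb{P}_{\quot_d\times C}(\mathcal{E}^\vee\otimes\mathcal{K}_C - V^\vee\otimes\mathcal{K}_C)$ in the virtual sense of \eqref{eqn:def proj 2}, with $\mathcal{L}\cong\mathcal{O}(-1)$; this is precisely what makes the pushforward computable. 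The general formula \eqref{eqn:segre 2} then gives $(p_+\times p_C)_*\bigl[\tfrac{1}{z-\lambda}\bigr]=1-\tfrac{c(V,z+K_C)}{c(\mathcal{E},z+K_C)}$, whose coefficient of $z^{-k}$ is $(-1)^{k-1}c_k\bigl((V-\mathcal{E})\otimes\mathcal{K}_C^{-1}\bigr)$. Completing the sign chase you explicitly leave open returns formula \eqref{eqn:equality coefficients}, which has the exponent $(-1)^{i-k-1}$ rather than the $(-1)^{i-k}$ appearing in the proposition's display; your $\pm$ hedging was therefore prudent, and the discrepancy is a sign inconsistency in the paper between the statement and its proof, not an error on your end.
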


More generally, the operator of multiplication by any polynomial in the Chern classes of $\mathcal{E}$ will be a suitable sum of products of the operators \eqref{eqn:equality coefficients intro}, and these can be put in canonical form by commuting all the $f$'s to the right of all the $a$'s. To compute such commutators in practice, we will need the following result. We will write $\delta \in H^2(C \times C)$ for the class of the diagonal, and $\chi_{i+j,r-1}$ for the Kronecker delta symbol (equal to 1 if $i+j=r-1$ and 0 otherwise).

\begin{proposition}
\label{prop:commute intro}

For any $i,j \in \{0,\dots,r-1\}$, we have the following relation
\begin{equation}
\label{eqn:comm a and f intro}
[f_j,a_i] = \delta \cdot\begin{cases} \chi_{i+j,r-1} (-1)^i + \sum_{s=0}^{i-1} a_s f_{i+j-s-1}  (-1)^{i-s} &\text{if } i+j \leq r-1 \\ - \sum_{s=i}^{r-1} a_s f_{i+j-s-1} (-1)^{i-s} &\text{if } i+j \geq r \end{cases} 
\end{equation}
The LHS of \eqref{eqn:comm a and f intro} is an operator $H^*(\quot_d) \rightarrow H^*(\quot_d \times C \times C)$, with $a_i$ and $f_j$ acting in the first and second factor of $C \times C$, respectively. The analogous convention applies to the RHS of \eqref{eqn:comm a and f intro}, even though the presence of $\delta$ ensures that the overall expression is supported on the diagonal of $C \times C$.

\end{proposition}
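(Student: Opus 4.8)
The plan is to compute the composition $f_j a_i$ and the composition $a_i f_j$ by reducing each to a pushforward on a single incidence variety, and then subtract. The operator $a_i$ takes $H^*(\quot_d)$ to $H^*(\quot_{d+1} \times C)$ via the correspondence $\quot_{d,d+1}$ with insertion of $c_i(\CG)$, and $f_j$ takes $H^*(\quot_{d+1})$ to $H^*(\quot_d \times C)$ via $\quot_{d,d+1}$ with insertion of $\lambda^j$; composing them, both $f_j a_i$ and $a_i f_j$ are realized on the fibered product $Z = \quot_{d,d+1} \times_{\quot_{d+1}} \quot_{d,d+1}$, that is, pairs $E'' \subset E' \subset E \subset V$ of colength $2$ and $1$ in $V$, together with the two support points. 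First I would analyze the geometry of $Z$. It splits naturally according to whether the two support points $x_1$ (of $E/E'$, recorded by $a_i$) and $x_2$ (of $E'/E''$, recorded by $f_j$) are distinct or not. Away from the diagonal, $Z$ is a $\mathbb P^{r-1} \times \mathbb P^{r-1}$-bundle over $\quot_d \times (C\times C \setminus \Delta)$, and on this open part the two operators commute — geometrically because the order of quotienting at disjoint points does not matter, and $\CG, \CL$ for the two steps are pulled back from independent factors. This is exactly why the commutator $[f_j,a_i]$ is a multiple of $\delta$. So the whole content is concentrated on the locus $x_1 = x_2$, where the two one-dimensional quotients both live at the same point $x$, and we must compare $E'' \subset E' \subset E$ with the "other order" $E'' \subset E_1' \subset E$; the subtlety is that for a fixed $E'' \subset E$ of colength $2$ supported at $x$, the intermediate sheaves form a $\mathbb P^1$ inside a $\mathbb P^{r-1}$ (the projectivization of $(E/E'')^\vee$ restricted to the length-$2$ structure sheaf at $x$), and the two correspondences cut out different sub-loci of this $\mathbb P^1 \times (\text{fibers})$.

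The second step is the local computation at the diagonal. After restricting to a formal neighborhood of $x$, the data $E'' \subset E \subset V$ near $x$ is governed by the rank-$r$ locally free sheaf $\CE$ on $\quot_d \times C$ and its first jets along $C$. I would set up the two line bundles $\lambda_1 = c_1(\CL_{x_1})$ (the hyperplane class from the $a_i$-step, living on the $\mathbb P^{r-1}$ of quotients of $\CE$) and $\lambda_2 = c_1(\CL_{x_2})$ (from the $f_j$-step, on the $\mathbb P^{r-1}$ of quotients of $\CG$, which is the kernel of the first quotient), together with $\CG_1 = \ker(\CE \twoheadrightarrow \CL_{x_1})$ and $\CG_2 = \ker(\CG_1 \twoheadrightarrow \CL_{x_2})$. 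Both $f_j a_i$ and $a_i f_j$ are pushforwards of monomials in these classes from (a blown-up version of) this nested bundle down to $\quot_d \times \Delta$. The key algebraic identity to extract is: the difference of the two pushforwards equals $\delta$ times a Grassmannian-type Gysin sum, and one evaluates it by the "sum over a $\mathbb P^1$" residue, i.e. by applying the Segre/Gysin formula $\sum_{\text{fiber}} \frac{\text{stuff}}{\prod(\lambda_1 - \lambda_2)\cdots}$. The appearance of the two cases $i+j \le r-1$ versus $i+j \ge r$ is precisely the splitting of a partial-fractions / geometric-series expansion of $\frac{1}{(1 - \lambda_1 t)(1-\lambda_2 t)}$-type generating function into a polynomial part and a "tail" part — the Kronecker delta $\chi_{i+j,r-1}$ is the boundary term of this expansion, and the sums $\sum_{s} a_s f_{i+j-s-1}(-1)^{i-s}$ on the RHS are exactly the re-expression of the residue in terms of the operators $a_s f_{t}$ themselves (recognizing that $a_s$ on $\quot_d$ followed by $f_t$ reproduces a Gysin pushforward of $c_s(\CG) \lambda^t$ from the same nested locus).

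Concretely, I would package this via generating series: set $a(x) = \sum_{i} a_i x^i$ and $f(y) = \sum_{j} f_j y^j$ (finite sums, $0 \le i,j \le r-1$), and aim to prove the single identity
\begin{equation}
\label{eqn:gen series comm}
[f(y), a(x)] = \delta \cdot \frac{x \, a(x) f(y) - y\, a(y) f(x) + (xy)^{\text{top}}(\cdots)}{x - y} \quad (\text{suitably truncated}),
\end{equation}
whose coefficient extraction gives \eqref{eqn:comm a and f intro}, with the two cases coming from whether the monomial $x^i y^j$ lies below or above the "antidiagonal" degree $r-1$ where the numerator's polynomial correction terms (the $\chi$ term) are supported. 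Establishing the series identity \eqref{eqn:gen series comm} is cleanest because on the diagonal $\CE$ fits in $0 \to \CG_1 \to \CE \to \CL_{x_1}\to 0$ and $0 \to \CG_2 \to \CG_1 \to \CL_{x_2} \to 0$, so one has the Chern-class relations $c(\CG_1) = c(\CE)/(1+\lambda_1)$ and $c(\CG_2) = c(\CG_1)/(1+\lambda_2)$; feeding these into the definitions of $a(x)$ and $f(y)$ turns both compositions into residues of the \emph{same} rational function in $\lambda_1, \lambda_2$, and the commutator is a genuine residue (difference of two contour choices), which is manifestly supported where $\lambda_1$ and $\lambda_2$ collide — i.e. on $\Delta$, explaining the prefactor $\delta$.

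I expect the main obstacle to be the careful treatment of the diagonal locus of $Z$: the fibered product $\quot_{d,d+1}\times_{\quot_{d+1}}\quot_{d,d+1}$ is \emph{not} smooth along $x_1 = x_2$ (the two $\mathbb P^1$'s of flags through a fixed colength-$2$ subsheaf at a point meet in a controlled but nontrivial way), so one must either resolve it or, preferably, compute the relevant pushforwards by excess-intersection / deformation-to-the-normal-cone, tracking the excess bundle contributed by $\mathcal K_C$ (the cotangent direction along $C$) at the diagonal. Getting the signs $(-1)^{i-s}$ and the exact range of the summation index $s$ correct is the delicate bookkeeping that this excess-intersection analysis must produce; everything else (the off-diagonal vanishing, the generating-series reformulation) is formal once the local model at the diagonal is pinned down.
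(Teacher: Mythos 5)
Your proposal has a genuine gap, and it also misses the much simpler route the paper actually takes.

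The geometric setup you describe is incorrect. The composition $f_j a_i$ goes $H^*(\quot_d) \to H^*(\quot_{d+1}\times C) \to H^*(\quot_d\times C\times C)$ and its correspondence is $\quot_{d,d+1}\times_{\quot_{d+1}}\quot_{d,d+1}$, which parametrizes wedges $E_1\supset E'\subset E_2$ with $E'$ of colength $d+1$ and $E_1,E_2$ of colength $d$; not chains $E''\subset E'\subset E$. Meanwhile $a_i f_j$ factors through $\quot_{d-1}$ on the way, so its correspondence is $\quot_{d-1,d}\times_{\quot_{d-1}}\quot_{d-1,d}$, parametrizing wedges $E_1\subset E''\supset E_2$ with $E''$ of colength $d-1$. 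These are two different incidence varieties, not the single $Z$ you write down; your claim that ``both $f_j a_i$ and $a_i f_j$ are realized on the fibered product $Z$'' is false. The off-diagonal identification between the two is genuinely nontrivial (it is the intersect/union construction $E\mapsto E_1\cap E_2$ versus $E'\mapsto E_1+E_2$ used in the paper's proof of the $[e,f]$ relation), and your sketch never invokes it. Once that identification is in place your observation that the commutator is supported on the diagonal is morally right, but the rest (the blow-up/excess-intersection computation, the residue bookkeeping, your formula \eqref{eqn:gen series comm}) is left entirely schematic, with the crucial signs and index ranges explicitly deferred.

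The paper's actual proof is a short algebraic computation that sidesteps all of this geometry. It starts from the identity $a(z)=e(z)m(z)\big|_\Delta$ (equation \eqref{eqn:formula a in terms of e}), then computes $f(w)a(z)$ by commuting $f(w)$ first past $e(z)$ using \eqref{eqn:rel ef_quot}, then past $m(z)$ using \eqref{eqn:rel fm_quot}, both of which are already established in Theorem~\ref{theorem:yangian}. This gives a uniform identity
$$
[f_j,a_i] = \delta\sum_{s=0}^{i-1}a_s f_{i+j-s-1}(-1)^{i-s} + \delta\sum_{s=0}^{i}h_{i+j-r-s+1}m_s(-1)^{i-s},
$$
and the two cases in \eqref{eqn:comm a and f intro} are produced by evaluating the $h\,m$-term: it is a Kronecker delta when $i+j\le r-1$ and, when $i+j\ge r$, it is rewritten using the multiplication formula \eqref{eqn:equality coefficients} (Proposition~\ref{prop:mult intro}) as a sum of $a_s f_t$'s. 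If you want to take a geometric route instead, you would need to redo the wedge-variety analysis from the $[e,f]$ proof; but the factorization $a=em|_\Delta$ makes that unnecessary.
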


In Section \ref{sec:theorem 1} we will use Propositions \ref{prop:mult intro} and \ref{prop:commute intro} (together with the well-known fact that any element of $H^*(\quot_d(V))$ is a polynomial in the Chern classes of the universal sheaf) to show that the classes
$$
\Big\{ a_{k_1} \dots a_{k_d}(\gamma)|0\rangle \Big\}_{r > k_1 \geq \dots \geq k_d \geq 0, \, \, \gamma \in \, {\mathbb Q}{\text{-basis of} \, H^*(C^d)_{\Sigma}}}
$$
span $H^*(\quot_d(V))$ and are linearly independent, thus establishing Theorem \ref{theorem:fock basis}.

\subsection{The Yangian action}
Recall the operators $e_k, \, f_k$ of \eqref{eqn:formula e intro} and \eqref{eqn:formula f intro}, and let us combine them into the following formal series:
$$
e(z) = \sum_{k=0}^{\infty} \frac {e_k}{z^{k+1}} \qquad f(z) = \sum_{k=0}^{\infty} \frac {f_k}{z^{k+1}}
$$
Consider the operators of multiplication by universal classes
\begin{equation}
\label{eqn:operator m intro}
m_i = (\text{multiplication by } c_i(\mathcal{E})) \, \circ \pi^* : H^*(\quot_d) \rightarrow H^*(\quot_{d} \times C).
\end{equation}
Let us also combine them into the following degree $r$ monic polynomial
$$
m(z) = \sum_{i=0}^r z^{r-i} (-1)^i m_i,
$$
which represents multiplication by the universal Chern series $$c (\mathcal E, z) =\sum_{i=0}^r z^{r-i} (-1)^i c_i (\mathcal E). $$
Next, we write the power series
\begin{equation}
\label{eqn:abstract h}
h(z) = \sum_{k=0}^{\infty} \frac {h_k}{z^{k+r}} := \frac {c(V, z + K_C)}{c(\mathcal E, z) c(\mathcal E, z+K_C)} 
\end{equation}
where $K_C \in H^2(C)$ denotes the canonical class. We make the convention that $h_k = 0$ if $k < 0$. Recall the diagonal class $\delta = [ \Delta] \in H^2 (C \times C)$, and let
\begin{equation}
{\mathsf H}_{\quot \times C^n} = \bigoplus_{d=0}^{\infty} H^*(\quot_d \times C^n), 
\end{equation}
for any $n \in \BN$.

\begin{theorem}
\label{theorem:yangian}
The following identities hold:
\begin{equation}
\label{eqn:rel mm_quot}
    [m(z), \, m(w)] = 0
\end{equation}
\begin{equation}
\label{eqn:rel ee_quot}
 \Big[ e(z)e(w) (z-w+\delta) \Big]_{z^{<0},w^{<0}}= \Big[ e(w) e(z) (z-w-\delta) \Big]_{z^{<0},w^{<0}}
\end{equation}
\begin{equation}
\label{eqn:rel ff_quot}
\Big[ f(z)f(w) (z-w-\delta) \Big]_{z^{<0},w^{<0}} = \Big[ f(w) f(z) (z-w+\delta) \Big]_{z^{<0},w^{<0}}
\end{equation}
\begin{equation}
\label{eqn:rel me_quot}
m(w) e(z) = \left[ e(z)m(w) \cdot \frac {w-z+\delta}{w-z} \right]_{w \gg z | z^{<0}, w^{\geq 0}}
\end{equation}
\begin{equation}
\label{eqn:rel fm_quot}
f(z)m(w) = \left[ m(w) f(z) \cdot \frac {w-z+\delta}{w-z}\right]_{w \gg z| z^{<0}, w^{\geq 0}}
\end{equation}
\begin{equation}
\label{eqn:rel ef_quot}
[e(z), f(w)] = \delta \cdot \frac {h(z)-h(w)}{z-w}
\end{equation}
where subscripts such as $z^{<0},w^{<0}$ means that we only retain those powers of $z,w$ prescribed by the inequalities, while the subscript $w \gg z$ means that we expand all rational functions in $z,w$ in the prescribed order. Formulas \eqref{eqn:rel mm_quot}--\eqref{eqn:rel ef_quot} are equalities of formal power series whose coefficients in $z, w$ are linear operators 
$$
{\mathsf H}_{\quot} \rightarrow {\mathsf H}_{\quot \times C \times C}.
$$
We make the convention that on both sides of each equation \eqref{eqn:rel mm_quot}--\eqref{eqn:rel ef_quot}, the operators in the series with variable $w$ (respectively $z$) are associated with the first (respectively second) factor of $C \times C$.
\end{theorem}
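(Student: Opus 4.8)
The plan is to realize each of $m(z),e(z),f(z)$ as a geometric correspondence and to verify the six relations by computations on fiber products of (nested) Quot schemes, in the style of the Nakajima--Grojnowski calculus \cite{nakajima, grojnowski} and its refinements. Two remarks make the bookkeeping manageable. First, summing \eqref{eqn:formula e intro}, \eqref{eqn:formula f intro} into generating series and using $\sum_{k\geq 0}\lambda^k/z^{k+1}=1/(z-\lambda)$ gives the closed forms $e(z)=(p_+\times p_C)_*\big(\frac{1}{z-\lambda}p_-^*\big)$ and $f(z)=(p_-\times p_C)_*\big(\frac{1}{z-\lambda}p_+^*\big)$, where by \eqref{eqn:quot proj 1} the space $\quot_{d,d+1}(V)$ is the projective bundle $\mathbb P_{\quot_d(V)\times C}(\mathcal E_d)$ with hyperplane class $\lambda=c_1(\mathcal L)$; here $\mathcal E_d$ denotes the universal subsheaf on $\quot_d(V)\times C$. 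Second, by \eqref{secondexact} the line bundle $\mathcal L$ is a quotient of $\mathcal E_d$, so $\lambda$ is a root of the alternating Chern polynomial, i.e. $c(\mathcal E_d,w)=(w-\lambda)\,c(\mathcal G,w)$; and on $\quot_{d,d+1}(V)\times C$ the universal subsheaves pulled back along $p_-$ and $p_+$ fit into an elementary modification $0\to\mathcal E_{d+1}\to\mathcal E_d\to\iota_*(\text{line bundle})\to0$ supported on the graph $\Gamma$ of $p_C$. The relation \eqref{eqn:rel mm_quot} is immediate, as multiplication by universal classes takes place in the commutative ring $H^*(\quot_d(V)\times C\times C)$.

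For \eqref{eqn:rel me_quot} and \eqref{eqn:rel fm_quot}: writing $m(w)e(z)$ and $e(z)m(w)$ with the projection formula, both become a single pushforward from $\quot_{d,d+1}(V)\times C$ of the class $\frac{1}{z-\lambda}p_-^*(-)$, multiplied by $c(\mathcal E_{d+1},w)$ in the first case and $c(\mathcal E_d,w)$ in the second. Their ratio is controlled by the modification sequence: it equals $1$ plus a term supported on $\Gamma$, and since $\Gamma$ is the graph of the support-point map $p_C$, pushing forward along $p_+\times p_C$ turns that term into a multiple of the diagonal class $\delta\in H^2(C\times C)$. Feeding in $c(\mathcal E_d,w)=(w-\lambda)c(\mathcal G,w)$ to absorb the factor $\frac{1}{z-\lambda}$ then produces exactly the rational function $\frac{w-z+\delta}{w-z}$ expanded in the region $w\gg z$; the same computation with $p_-$ and $p_+$ exchanged yields \eqref{eqn:rel fm_quot}. (Expanded in the coefficients of $z$ and $w$, these identities are reorganizations of the commutators $[m_i,e_k]$ and $[f_k,m_i]$, closely related to Proposition \ref{prop:commute intro}.)

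For \eqref{eqn:rel ee_quot} and \eqref{eqn:rel ff_quot}: both sides are pushforwards from the double nested Quot scheme $\quot_{d,d+1,d+2}(V)$ of flags $E\supseteq E'\supseteq E''$ with length-one successive quotients; by \eqref{eqn:quot proj 1} applied twice, this is the tower of projective bundles $\mathbb P_{\quot_{d,d+1}(V)\times C}(\mathcal E_{d+1})\to\mathbb P_{\quot_d(V)\times C}(\mathcal E_d)$ over $\quot_d(V)\times C\times C$, hence smooth of dimension $r(d+2)$; it carries the two hyperplane classes $\lambda_1,\lambda_2$ and the two support points $x$ (of $E/E'$) and $y$ (of $E'/E''$). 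One computes that $e(z)e(w)$ is the pushforward of $\frac{1}{(z-\lambda_2)(w-\lambda_1)}$ and $e(w)e(z)$ the pushforward of $\frac{1}{(z-\lambda_1)(w-\lambda_2)}$, but with the $z$- and $w$-labelled copies of $C$ attached to opposite steps of the flag. Multiplying by $z-w\pm\delta$, applying the truncation $[\,\cdot\,]_{z^{<0},w^{<0}}$, and pushing forward, the identity reduces to a comparison of these classes on $\quot_{d,d+1,d+2}(V)$: away from the locus $\{x=y\}$ the two match tautologically and the truncation discards the polynomial discrepancy, while on $\{x=y\}$ — where $E/E''$ is a length-two sheaf at a single point of $C$ — the relation between $\lambda_1$ and $\lambda_2$ forced by the geometry of that locus accounts for the $\pm\delta$ shift. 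The argument for \eqref{eqn:rel ff_quot} is identical with the roles of $p_\pm$ exchanged.

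The relation \eqref{eqn:rel ef_quot} is the heart of the theorem. Both $e(z)f(w)$ and $f(w)e(z)$ are correspondences on $\quot_d(V)^2\times C^2$: the latter is supported on pairs $(E_1,E_2)$ admitting a common colength-one subsheaf $E'\subseteq E_1\cap E_2$, the former on pairs admitting a common supersheaf $E''\supseteq E_1+E_2$ with each $E''/E_i$ of length one. Off the diagonal $\Delta_{\quot_d(V)}$ these loci are identified by $E'\leftrightarrow E_1\cap E_2$ and $E''\leftrightarrow E_1+E_2$, compatibly with the support points and the hyperplane classes, so their contributions cancel in the commutator. The difference is therefore supported on $\Delta_{\quot_d(V)}\times\Delta_C$, and equals $\Delta_{C*}$ of a class on $\quot_d(V)\times C$ obtained by integrating $\frac{1}{(z-\lambda)(w-\lambda)}$ over the fibers of the two relevant $\mathbb P^{r-1}$-bundles, weighted by the Euler class of an excess bundle built from the universal sheaves and $T\quot_d(V)=\text{Hom}_\pi(\mathcal E_d,\mathcal F_d)$; carrying out this fiber integral by Serre duality on $C$ — which is the source of the canonical-class shift $z\mapsto z+K_C$ — together with the exact sequence $0\to\mathcal E_d\to V\to\mathcal F_d\to0$ collapses the answer to $\delta\cdot\frac{h(z)-h(w)}{z-w}$ with $h(z)=\frac{c(V,z+K_C)}{c(\mathcal E_d,z)\,c(\mathcal E_d,z+K_C)}$. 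I expect this last step to be the main obstacle: identifying the excess/self-intersection class on the diagonal and matching the resulting characteristic-class expression — with its two denominators and its $K_C$-shift — to the stated $h(z)$ requires the same careful Koszul and Serre-duality bookkeeping as in the surface cases treated in \cite{lehn, negut3}, now adapted to the one-dimensional geometry of $C$. With \eqref{eqn:rel ef_quot} in hand, all six relations are verified.
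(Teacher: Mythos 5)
Your setup is correct — the identification of $e(z),f(z),m(z)$ as correspondences via \eqref{eqn:quot proj 1}, \eqref{eqn:quot proj 2}, \eqref{eqn:ses nested} and the elementary modification structure all match the paper — and \eqref{eqn:rel mm_quot} is indeed trivial. However there are two genuine gaps in the nontrivial relations.

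For \eqref{eqn:rel ee_quot} and \eqref{eqn:rel ff_quot}, your argument ends with a sketch: ``away from $\{x=y\}$ the two match tautologically... on $\{x=y\}$ the relation between $\lambda_1,\lambda_2$ accounts for the $\pm\delta$ shift.'' The first half is true (the two intermediate flags over $E''\subset E$ exchange the roles of $\lambda_1,\lambda_2$ and of the two copies of $C$), but the second half is where the actual content lies and you have not supplied it. The paper does not argue locus-by-locus; instead it pushes forward globally to $\quot_{d,d+2}\times C\times C$ via the generically $2{:}1$ map $\tau$, using $\quot_{d,d+1,d+2}\cong\mathbb P_{\quot_{d,d+2}\times C}(\mathcal E-\mathcal E'')$ and the pushforward formula \eqref{eqn:tau_p1} to express both $e(z)e(w)(z-w+\delta)$ and $e(w)e(z)(z-w-\delta)$ as explicit rational functions in $c(\mathcal E,\bullet)$ and $c(\mathcal E'',\bullet)$. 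The difference comes out as $\delta\cdot[g(z)+g(w)]$ for a single series $g$ — crucially a \emph{variable-separated} expression, which therefore has no mixed monomials $z^iw^j$ with both exponents negative, so the truncation kills it. That final structural observation is the mechanism behind the $\pm\delta$ shift, and nothing in your sketch produces it. (The paper's Appendix offers a second proof via the blow-up $\fY_{d,d+1,d+2}\to\quot_{d,d+1,d+2}$ along $Z$, where the identity $\tlambda_2-\lambda_1=\delta-\varepsilon$ and Proposition \ref{prop:h4} do exactly the bookkeeping you are gesturing at; if you prefer your diagonal-centered picture, that is the route to make it rigorous.)

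For \eqref{eqn:rel ef_quot}, you correctly reproduce the cut-and-paste argument ($E\mapsto E_1\cap E_2$, $E'\mapsto E_1+E_2$) showing the commutator is supported on the diagonal. But you then set off to compute an excess-intersection/Euler-class contribution on the diagonal, acknowledge this is the main obstacle, and do not carry it out. You have missed the decisive simplification: once the commutator is known to be the operator of multiplication by some class $\Psi(z,w)\in H^*(\quot_d\times C)[[z^{-1},w^{-1}]]$, it suffices to apply both sides to the fundamental class $1\in H^0(\quot_d)$. The classes $f(w)\cdot 1$ and $e(z)\cdot 1$ are computed directly from the Segre pushforward formulas \eqref{eqn:segre 1} and \eqref{eqn:segre 2} applied to the two projectivizations \eqref{eqn:quot proj 1} and \eqref{eqn:quot proj 2}, and a second application of the same formulas to $e(z)f(w)\cdot 1$ and $f(w)e(z)\cdot 1$ gives $\Psi(z,w)=\delta\cdot\frac{h(z)-h(w)}{z-w}$ with no excess-intersection analysis required. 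The $K_C$-shift you attribute to Serre duality indeed enters, but only once, through Lemma \ref{lemma:projectivizations} in the formula for $e(z)\cdot 1$.

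A smaller point: in your description of \eqref{eqn:rel me_quot} you say the factorization $c(\mathcal E_d,w)=(w-\lambda)c(\mathcal G,w)$ absorbs the factor $\tfrac{1}{z-\lambda}$. That is not what happens: $\tfrac{1}{z-\lambda}$ is carried through unchanged, and the factorization (valid only on the graph $\Gamma$, where $\delta$ restricts) absorbs the pole at $w=\lambda$ to show that $m(w)e(z)-e(z)m(w)\tfrac{w-z+\delta}{w-z}$ equals $\Delta_*\bigl(-\tfrac{c(\mathcal G,w)}{w-z}\bigr)$, a polynomial of degree $r-1$ in $w$, hence killed by the truncation.
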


\begin{remark}
From the tautological sequence \eqref{secondexact}, we note that if we assemble the $a$ operators as $$a (z) = \sum_{i=0}^{r-1} z^{r-i-1} (-1)^i a_i,$$
we have 
\begin{equation}
\label{eqn:formula a in terms of e first}
a(z) = e(z) m(z) \Big|_\Delta
\end{equation}
with the composition $e(z)m(z): H^*(\quot_d) \rightarrow H^*(\quot_{d+1} \times C \times C)$, and $|_\Delta$ denoting restriction to the diagonal of $C \times C$. The commuting operators $a$ are thus part of the larger algebra considered above.
\end{remark}

In terms of series coefficients, formulas \eqref{eqn:rel mm_quot}--\eqref{eqn:rel ef_quot} are equivalent to the following commutator relations for all $k,l \geq 0$:
\begin{equation}
\label{eqn:rel mm_quot coefficient}
[m_k,m_l] = 0 
\end{equation}
\begin{equation}
\label{eqn:rel ee_quot coefficient}
[e_{k+1},e_l] -[ e_k, e_{l+1}] = (-\delta) \left( e_k e_l + e_l e_k \right)  
\end{equation}
\begin{equation}
\label{eqn:rel ff_quot coefficient}
[f_{l},f_{k+1}] -[ f_{l+1}, f_{k}] = (-\delta) \left( f_k f_l  + f_l f_k  \right) 
\end{equation}
\begin{equation}
\label{eqn:rel me_quot coefficient}
[m_l, e_k] = \delta \sum_{s = 0}^{l-1} e_{k+s} m_{l-s-1} (-1)^{s+1} 
\end{equation}
\begin{equation}
\label{eqn:rel fm_quot coefficient}
[f_k , m_l] = \delta \sum_{s = 0}^{l-1}  m_{l-s-1} f_{k+s} (-1)^{s+1} 
\end{equation}
\begin{equation}
\label{eqn:rel ef_quot coefficient}
[e_k, f_l] = (-\delta) \cdot h_{k+l-r+1} 
\end{equation}
As before, we make the convention that the operators with labels $k \pm s$ (respectively $l\pm s$) take values in the first (respectively second) factor of $C \times C$, in every equation. Formulas \eqref{eqn:rel mm_quot coefficient}--\eqref{eqn:rel ef_quot coefficient} match the defining relations in the shifted Yangian on $\mathfrak{sl}_2$ (defined in greater generality in \cite{bk}), which we now recall.

\begin{definition}
\label{def:yangian}
Let $\hbar$ be a formal parameter and fix $r \in \mathbb N$. The shifted Yangian 
$Y_{\hbar}^r(\mathfrak{sl}_2)$ is the $\BQ[\hbar]$ algebra generated by symbols
$$
\Big\{ e_k, f_k, m_i \Big\}_{k \geq 0, i \in \{1,\dots,r\}}
$$
modulo the following relations for all $k,l \geq 0$:
\begin{equation}
\label{eqn:rel mm}
[m_k,m_l] = 0 
\end{equation}
\begin{equation}
\label{eqn:rel ee}
\frac {[e_{k+1},e_l]}{\hbar} -\frac {[ e_k, e_{l+1}]}{\hbar} =  e_k e_l + e_l e_k 
\end{equation}
\begin{equation}
\label{eqn:rel ff}
\frac {[f_{l},f_{k+1}]}{\hbar} - \frac {[ f_{l+1}, f_{k}]}{\hbar} = f_k f_l + f_l f_k 
\end{equation}
\begin{equation}
\label{eqn:rel me}
\frac {[m_l, e_k]}{\hbar} = - \sum_{s = 0}^{l-1} e_{k+s} m_{l-s-1} (-1)^{s+1}  
\end{equation}
\begin{equation}
\label{eqn:rel fm}
\frac {[f_k , m_l]}{\hbar} = -\sum_{s = 0}^{l-1}  m_{l-s-1} f_{k+s} (-1)^{s+1} 
\end{equation}
\begin{equation}
\label{eqn:rel ef}
\frac {[e_k, f_l]}{\hbar} = h_{k+l-r+1} 
\end{equation}
where
\begin{equation}
\label{eqn:abstract h yangian}
h(z) = \sum_{k=0}^{\infty} \frac {h_k}{z^{k+r}} := \frac {P(z)}{m(z)m(z+\hbar)}
\end{equation}
for some degree $r$ monic polynomial $P(z)$ (we declare $h_k = 0$ for $k<0$). 

\end{definition}

We note that ``shifted" refers to the series \eqref{eqn:abstract h yangian} starting at $z^{-r}$. Comparing the definition of the Yangian algebra with the identities of Theorem \ref{theorem:yangian}, it is clear we can define a $\mathbb Q$-linear map
\begin{equation}
\label{eqn:yangian rep}
Y_{\hbar}^r(\mathfrak{sl}_2) \longrightarrow \, \text{Hom} \, ({\mathsf H}_{\quot}, \, {\mathsf H}_{\quot \times C} ) 
\end{equation}
such that the Yangian generators are mapped to the geometric operators $e, f, m$, the formal variable $\hbar$ is sent to $K_C$, and the multiplication in the Yangian corresponds in the right hand side of \eqref{eqn:yangian rep} to composition of operators followed by restriction to the diagonal divisor $\Delta \subset C \times C$ (see Definition \ref{def:action} in Section \ref{sec:rep theory} for a more precise statement of the properties of the map \eqref{eqn:yangian rep}, and for the reason it deserves to be called an ``action").

\begin{corollary}
\label{cor:yangian}
For any rank $r$ vector bundle $V$ on a smooth projective curve $C$, the map
$$
Y_{\hbar}^r(\mathfrak{sl}_2) \longrightarrow \emph{Hom} \, ({\mathsf H}_{\quot}, \, {\mathsf H}_{\quot \times C} ) 
$$
determined by \eqref{eqn:formula e intro}, \eqref{eqn:formula f intro} and \eqref{eqn:operator m intro} is an action in the sense of Definition \ref{def:action}.

\end{corollary}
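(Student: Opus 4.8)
The plan is to deduce the corollary from Theorem~\ref{theorem:yangian} by unwinding Definition~\ref{def:action}. That definition requires: that the generators $e_k, f_k, m_i$ of $Y^r_{\hbar}(\mathfrak{sl}_2)$ be sent to the geometric operators \eqref{eqn:formula e intro}, \eqref{eqn:formula f intro}, \eqref{eqn:operator m intro}; that $\hbar$ be sent to $K_C$; and that, once the associative product of the Yangian is modeled on the geometric side by \emph{composition of operators followed by restriction to the diagonal} $\Delta \subset C \times C$, the defining relations of Definition~\ref{def:yangian} become true identities among operators $\mathsf{H}_{\quot} \to \mathsf{H}_{\quot \times C}$. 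The first two requirements are built into the assignment, so the entire content lies in the third, and that is precisely Theorem~\ref{theorem:yangian} once two dictionaries are put in place.

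The first step is to record that the composition-then-restriction rule defines a map out of an honest algebra. A word $g_1 \cdots g_n$ in the generators is sent to the composite $\mathsf{H}_{\quot} \xrightarrow{G_n} \mathsf{H}_{\quot \times C} \xrightarrow{G_{n-1}} \cdots \xrightarrow{G_1} \mathsf{H}_{\quot \times C^n}$, each $G_j$ acting on the $\quot$-factor and producing a fresh copy of $C$, followed by pullback along the small diagonal $C \hookrightarrow C^n$. Since the small diagonal factors as a chain of divisorial diagonal embeddings $C \hookrightarrow C^2 \hookrightarrow \cdots \hookrightarrow C^n$ and pullback along closed immersions is functorial, the outcome is independent of the order in which the $C$-factors are collapsed; in particular the ordering conventions of Theorem~\ref{theorem:yangian} (which $C$-factor carries which label) are immaterial after the restriction. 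Hence the rule extends $\mathbb{Q}$-linearly to the free algebra on the generators, with values in $\mathrm{Hom}(\mathsf{H}_{\quot}, \mathsf{H}_{\quot \times C})$, and it only remains to check that this map annihilates the relations of Definition~\ref{def:yangian}.

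The key geometric input is the self-intersection identity $\delta|_\Delta = \Delta^*[\Delta] = c_1(N_{\Delta/C \times C}) = c_1(T_C) = -K_C$, i.e.\ $(-\delta)|_\Delta = K_C$. Restricting the geometric relations \eqref{eqn:rel mm_quot coefficient}--\eqref{eqn:rel ef_quot coefficient} to $\Delta$ and using that $\Delta^*$ is a ring homomorphism, every factor of $\delta$ on the right-hand sides contributes exactly one factor of $-K_C$; comparing with \eqref{eqn:rel mm}--\eqref{eqn:rel ef} rewritten in the cleared form ``(commutator) $= \hbar \cdot (\text{remainder})$'', one sees that \eqref{eqn:rel mm_quot coefficient}--\eqref{eqn:rel ef_quot coefficient} are exactly the images of \eqref{eqn:rel mm}--\eqref{eqn:rel ef} under $\hbar \mapsto K_C$. (Reassuringly, no division by $\hbar$ is ever carried out on the geometric side, which matters since $K_C$ is nilpotent.) For relation \eqref{eqn:rel ef} one must also match the series $h(z)$: under $\hbar \mapsto K_C$, the denominator $m(z)m(z+\hbar)$ of \eqref{eqn:abstract h yangian} becomes multiplication by $c(\mathcal{E}, z)\, c(\mathcal{E}, z+K_C)$, because $m(z) = \sum_{i} z^{r-i}(-1)^i m_i$ represents multiplication by $c(\mathcal{E}, z)$ by the very definition of the $m_i$; hence the polynomial $P(z)$ of Definition~\ref{def:yangian} must be identified with multiplication by the numerator $c(V, z+K_C)$ of \eqref{eqn:abstract h}, which is indeed a monic degree $r$ polynomial in $z$ with coefficients pulled back from $C$ and polynomial in $K_C$ and the $c_i(V)$. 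Thus the geometric operators realize the version of $Y^r_{\hbar}(\mathfrak{sl}_2)$ specified by $P(z) = c(V, z + \hbar)$.

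Granting all of the above, the corollary follows formally: the map $Y^r_{\hbar}(\mathfrak{sl}_2) \to \mathrm{Hom}(\mathsf{H}_{\quot}, \mathsf{H}_{\quot \times C})$ is well defined and satisfies the axioms of Definition~\ref{def:action}. The genuine mathematics — Propositions~\ref{prop:commute a intro}, \ref{prop:mult intro}, \ref{prop:commute intro} and Theorem~\ref{theorem:yangian} — is already in hand, so the only real obstacle here is bookkeeping: unpacking Definition~\ref{def:action} and verifying that ``compose, then restrict to the diagonal'' is coherent enough to model an associative algebra action even though $\mathrm{Hom}(\mathsf{H}_{\quot}, \mathsf{H}_{\quot \times C})$ is not itself an algebra. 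As explained above, this coherence comes from the factorization of the small diagonal together with functoriality of proper pushforward and l.c.i.\ pullback, so no new computation beyond Theorem~\ref{theorem:yangian} is required.
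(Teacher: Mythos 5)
Your proposal is correct and follows essentially the same route as the paper, which proves the corollary in a single sentence by asserting that it follows directly from Theorem~\ref{theorem:yangian} and Definition~\ref{def:action}. Your version simply makes explicit the bookkeeping the paper leaves implicit (coherence of compose-then-restrict via the factorization of the small diagonal, the identification $P(z) = c(V,z+\hbar)$, the match of $\delta$-factors in the geometric commutator relations with the $\hbar$-multiples in the abstract relations under $\hbar \mapsto K_C$), and correctly flags the nilpotency of $K_C$ as the reason item (4) of Definition~\ref{def:action} is stated via $\Delta_*$ rather than by dividing by $\hbar$.
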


\subsection{Scope} 

While we have stated our results for the usual cohomology of Quot schemes on a smooth projective curve $C$, our approach admits a number of natural generalizations.

\begin{itemize}

\item One can drop the condition that $C$ be projective and/or work with torus equivariant cohomology, and all our results still hold. In fact, Theorem \ref{theorem:yangian} is modeled on the fundamental case of $C$ = equivariant $\mathbb{A}^1$ of \cite{bffr,fffr}.

\item One can replace cohomology by Chow groups, and most of our main results (Theorem \ref{theorem:yangian}, Propositions \ref{prop:commute a intro}, \ref{prop:mult intro} and \ref{prop:commute intro}, and Corollary \ref{cor:yangian}) continue to hold. As for Theorem \ref{theorem:fock basis}, our argument shows that the various $a_{k_1} \dots a_{k_d}(\gamma)|0\rangle$ span the Chow groups of $\quot_d(V)$. However the proof of their linear independence fails at the level of Chow groups (this is because Proposition \ref{prop:linearly independent} uses the non-degeneracy of the intersection pairing).

\item At the cost of working with the derived Quot schemes of \cite{ck}, one can consider the situation when the curve $C$ is singular. While we make no claims about the technicalities involved, we expect the approach and results in the present paper to carry over to the case of singular $C$. Note that the operators considered in \cite{rennemo,kivinen} in connection to DT theory and algebraic knots are particular cases of our operators when $V = \mathcal{O}_C$.

\item After the present paper was written, a sophisticated generalization of our results to the setting of hyperquot schemes (parametrizing $n$-step flags of finite length quotients on $C$) was carried out by Kaushik in \cite{kaushik}.  The shifted Yangian of $\mathfrak{sl}_2$  is replaced by the shifted Yangian of $\mathfrak{sl}_{n+1}$, which also aligns with the treatment of parabolic Laumon spaces in \cite{bffr}. Overall the geometry developed in \cite{kaushik} is intricate, as the analysis of the Yangian action in the hyperquot setting is far from a routine generalization of the $n=1$ case of our paper.  

\end{itemize}

\subsection{Context and previous work} Over the past few decades, there has been substantial progress toward understanding the intersection theory of the Quot scheme parametrizing quotients of a locally free sheaf over a smooth curve. It is known that the Quot scheme admits a virtual class for any quotient rank and degree (cf. \cite{mo1}). Several associated virtual invariants have been analyzed: intersections of universal Atiyah-Bott generators in \cite{bertram}, \cite{mo1}, Segre invariants of tautological bundles in \cite{op}, K-theoretic invariants in \cite{os}, \cite{mos}. Connections to the intersection theory of the stack of vector bundles on $C$ were developed in \cite{witten}, \cite{bdw}, \cite{alina}, \cite{mo2}, \cite{mo3}. Relations to the virtual intersection theory of the Quot scheme of rank $0$ quotients on a surface were recently explored in \cite{op}, \cite{lim}, \cite{ajlop}, \cite{stark}. 
 
In a different direction, the study of cohomology and $K$-theory through the action of correspondences given by inclusion of sheaves has had much success in the case of Hilbert schemes of points on surfaces in \cite{nakajima}, \cite{grojnowski} and also for higher rank sheaves on surfaces (cf. \cite{baranovsky}, \cite{negut1, negut2, negut3, negut4}). In the curve setting, the Laumon/hyperquot space of full flags of subsheaves  of $\mathbb C^r$ on $\mathbb P^1$, compactifying the space of maps from $\mathbb P^1$ to the full flag variety of $\mathbb C^r$, was analyzed from this viewpoint in \cite{fk}; natural Hecke correspondences of sheaves were shown to give an $\mathfrak{sl}_r$ action on the cohomology of the full-flag hyperquot schemes, summing over all degree vectors. Related aspects of the $\mathfrak{sl}_r$ action on the {\it{equivariant}} cohomology for a maximal torus action were further studied in \cite{fffr}, where the extension to Yangians was also introduced. We note also the explicit description of the equivariant cohomology ring of the Quot scheme in \cite{bcs}.

\subsection{Acknowledgements} 

A.M. was supported by the NSF through grant DMS-1902310. A.N. gratefully acknowledges NSF grant DMS-$1845034$, as well as support from the MIT Research Support Committee. A.M. is indebted to  Anthony Licata for inspiring conversations surrounding 
algebra actions on the cohomology of Quot schemes several years ago.  A.N. would like to thank Alexander Tsymbaliuk for many interesting discussions about Yangians.

\section{The geometry of (nested) Quot schemes}
\label{sec:geometry}

\subsection{Projective bundles and pushforward formulas} If $\mathsf A$ is a rank $r$ vector bundle on a variety $X$, we will write
$$
c(\mathsf A,z) = \sum_{i=0}^r (-1)^i c_i(\mathsf A) \cdot z^{r-i}
$$
for the Chern polynomial of $\mathsf A$. We extend the notation above multiplicatively to $K$-theory classes, i.e. formal differences $\mathsf A - \mathsf B$ of vector bundles on $X$. 

\begin{definition}

If $\mathsf A$ is a rank $r$ vector bundle on an algebraic variety $X$, we write
\begin{equation}
\label{eqn:def proj 1}
\mathbb{P}_X(\mathsf A) = \text{Proj}_X(\text{Sym}^\bullet \mathsf A)
\end{equation}
for the projective bundle of one-dimensional quotients of $\mathsf A$. It is endowed with a tautological line bundle $\mathcal{O}(1)$ and a tautological morphism
$$
\pi^*(\mathsf A) \twoheadrightarrow \mathcal{O}(1)
$$
where $\pi : \mathbb{P}_X(\mathsf A) \rightarrow X$ is the natural projection.
\end{definition}

We note the basic pushforward formula
\begin{equation}
\label{eqn:segre 1}
\pi_*\left[ \frac 1{z-\lambda}\right] = \frac 1{c(\mathsf A, z)}
\end{equation}
(as power series in $z$) where
$$
\lambda = c_1(\mathcal{O}(1)) \in H^2(\mathbb{P}_X(\mathsf A)).
$$
It is easy to see that \eqref{eqn:segre 1} implies the formula 
\begin{equation}
\label{eqn:segre 1 modified}
\pi_*\left[ \frac {P(\lambda)}{z-\lambda}\right] = \left[ \frac {P(z)}{c(\mathsf A, z)} \right]_{z^{<0}}
\end{equation}
for every polynomial $P$, where the notation $[\dots]_{z^{<0}}$ means that we expand in non-positive powers of $z$ and remove all terms of the form $z^n$ for $n\geq 0$.

\begin{definition}

Consider a morphism of vector bundles on an algebraic variety $X$
$$
\mathsf B \xrightarrow{\phi} \mathsf A
$$
We define the projectivization
\begin{equation}
\label{eqn:def proj 2}
\mathbb{P}_X(\mathsf A- \mathsf B) \stackrel{\iota}\hookrightarrow \mathbb{P}_X(\mathsf A)
\end{equation}
(the morphism $\phi$ is not part of the notation, but it is implied) as the derived zero locus of the composition
$$
\pi^*(\mathsf B) \xrightarrow{\pi^*(\phi)} \pi^*(\mathsf A) \twoheadrightarrow \mathcal{O}(1)
$$
The derived zero locus coincides with the actual zero locus precisely when the latter has the expected dimension $\dim X + \text{rank } \mathsf A - \text{rank }\mathsf B -1$.

\end{definition}

Let $\pi' = \pi \circ \iota : \mathbb{P}_X(\mathsf A - \mathsf B) \rightarrow X$ be the natural projection. By combining \eqref{eqn:segre 1} and \eqref{eqn:segre 1 modified}, we obtain
\begin{equation}
\label{eqn:segre 2}
\pi'_*\left[ \frac 1{z-\lambda}\right] = \left[\frac {c(\mathsf B, z)}{c(\mathsf A, z)} \right]_{z^{<0}}.
\end{equation}

\subsection{Correspondences}
\label{sub:corr}

Although for convenience we often use the language of linear maps, we note that all our calculations are carried out on the level of correspondences. Specifically, given two smooth projective varieties $X$ and $Y$, to a class $\gamma \in H^*(X \times Y)$, we associate the linear map 
$$
p_{2*}(\gamma \cdot p_1^*): \, H^* (X) \rightarrow H^* (Y), 
$$
where $p_1,p_2:X \times Y \to X,Y$ are the standard projections. Under this identification, composition of linear maps matches the following notion of composition of correspondences
$$
\gamma \in H^*(X \times Y), \gamma' \in H^*(Y \times Z) \leadsto \gamma' \circ \gamma := p_{13*}(p_{12}^*(\gamma) \cdot p_{23}^*(\gamma'))\in H^*(X \times Z)
$$
with $p_{ij}$ denoting the obvious projection from $X \times Y \times Z$ to two of the three factors. 

\begin{remark}

While we state many of our results (specifically, the ones in Theorem \ref{theorem:yangian} and Propositions \ref{prop:commute a intro}, \ref{prop:mult intro}, \ref{prop:commute intro}) as equalities of operators on cohomology, this is only a matter of convenience. The equalities and their proofs hold in arbitrary cohomology theories, in particular at the level of Chow groups.

\end{remark}
 
\subsection{The Quot scheme and universal classes}
\label{sub:quot}

Let $C$ be a smooth projective curve, and fix a locally free sheaf $V \to C$. We will write $r = \text{rank }V$, and henceforth drop $V$ from the notation of all our Quot schemes. For any $d \geq 0$, we consider the Quot scheme
$$
\quot_d = \Big\{0 \to E \to V \to F \to 0, \, \deg F = d, \, \text{rank }F = 0 \Big\}
$$
It is well-known and easy to see that $\quot_d$ is smooth and projective of dimension $rd$. We recall the universal short exact sequence
\begin{equation}
\label{eqn:ses}
0 \rightarrow \mathcal{E} \rightarrow V \rightarrow \mathcal{F} \rightarrow 0
\end{equation}
on $\quot_d \, \times \, C$, where we abuse notation by writing $V$ for the pull-back of our fixed locally free sheaf $V$ from $C$ to $\quot_d \times C$ via the natural projection.

Using the Chern classes $c_k(\mathcal{E}) \in H^{2k}(\quot_d \times C)$ of the universal subbundle $\mathcal E$, for any $k_1,\dots,k_n \in \mathbb{N}$ and any $\gamma \in H^*(C^n)$, we may consider the pushforward class 
\begin{equation}
\label{eqn:universalclass}
\pi_* \Big(c_{k_1}(\mathcal{E}_1) \dots c_{k_n}(\mathcal{E}_n) \rho^*(\gamma) \Big) \in H^*(\quot_d).
\end{equation}
Here $\pi , \rho: \quot_d \times C^n \to \quot_d, C^n$ denote the standard projections, and we let $\mathcal{E}_1,\dots,\CE_n$ be the universal sheaves on $\quot_d \, \times \, C^n$ pulled back from the factors of $C^n$.
We refer to the pushforwards \eqref{eqn:universalclass} as {\it universal classes}.

\begin{remark}
\label{remark:diagonal}
Considering the product $\quot_d \times \quot_d,$ we note that the diagonal class is the top Chern class of a universal pushforward bundle, 
$$[\Delta_{\quot_d}] = c_{rd} \left (\pi_* (\mathcal E^\vee \otimes \mathcal F')\right ) \in H^{2rd} (\quot_d \times \quot_d),$$ where $\pi: \quot_d \times \quot_d \times C \to \quot_d \times \quot_d$ is the projection, $\mathcal E$ is the universal subsheaf on the first factor, and $\mathcal F'$ is the universal quotient sheaf on the second. This formula holds in the Chow ring as well. By standard arguments, it follows that both the cohomology and the Chow ring of $\quot_d$ are spanned by universal classes in the sense of \eqref{eqn:universalclass}. 
\end{remark}

\subsection{Nested Quot schemes and projectivizations} 
\label{sub:nested}

Define the nested Quot scheme as the moduli space of sequences
\begin{equation}
\label{eqn:def nested quot scheme}
\quot_{d,d+1} = \Big\{ E' \stackrel{x}\hookrightarrow E \subset V\Big\} \subset \quot_d \times \quot_{d+1},
\end{equation}
with the right-most inclusion having colength $d$. We write throughout
\begin{equation}
\label{eqn:notation x}
E' \stackrel{x}\hookrightarrow E \quad \text{if} \quad E' \subset E \text{ and } E/E' \cong \mathbb{C}_x
\end{equation}
We recall the basic maps of diagram \eqref{eqn:diagram zk}:
$$
\xymatrix{& \quot_{d,d+1} \ar[ld]_{{p_-}} \ar[d]^{p_C} \ar[rd]^{{p_+}} & \\ \quot_{d} & C & \quot_{d+1}},
$$
and the fact \eqref{eqn:quot proj 1} that the map $p_- \times p_C$ realizes $\quot_{d,d+1}$ as the projectivization $\mathbb{P}_{\quot_d \times C}(\mathcal{E}).$ This shows that $\quot_{d,d+1}$ is a fine moduli space, smooth and projective of dimension $(d+1)r$. In turn, the map $p_{+}$  to $\quot_{d+1}$ is generically finite of degree $d+1$. Moreover, we have the following.

\begin{lemma}
\label{lemma:projectivizations}
The map $p_+ \times p_C$ realizes $\quot_{d,d+1}$ as the projectivization
\begin{equation}
\label{eqn:quot proj 2}
\mathbb{P}_{\quot_{d+1} \times C}(\mathcal{E}^\vee \otimes \mathcal{K}_C - V^\vee \otimes \mathcal{K}_C)
\end{equation}
with respect to the dual of the first morphism in \eqref{eqn:ses}. 
\end{lemma}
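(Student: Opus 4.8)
The plan is to prove directly that $p_+ \times p_C$ realizes \eqref{eqn:quot proj 2}, by first constructing a morphism $\Phi \colon \quot_{d,d+1} \to \BP_{\quot_{d+1}\times C}(\CE^\vee\otimes\CK_C - V^\vee\otimes\CK_C)$ over $\quot_{d+1}\times C$ (with respect to $p_+\times p_C$) out of the universal data via the universal property of the projectivization, and then exhibiting its inverse on the level of functors of points; this last step simultaneously forces the expected-dimension statement needed to identify the derived zero locus \eqref{eqn:def proj 2} with the honest one.

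\emph{Construction of $\Phi$.} Work on $\quot_{d,d+1}\times C$, where the universal flag reads $\CE'\subset\CE\subset V$, with $\CE'$ the pullback of the universal subsheaf on $\quot_{d+1}\times C$ under $p_+\times\mathrm{id}_C$ and $\CE$ the pullback of the one on $\quot_d\times C$ under $p_-\times\mathrm{id}_C$. By definition of the nested Quot scheme, $\CE/\CE'$ is flat over $\quot_{d,d+1}$ with one-dimensional fibres and is set-theoretically supported on the graph $\iota\colon\Gamma\hookrightarrow\quot_{d,d+1}\times C$ of $p_C$, hence $\CE/\CE'\cong\iota_*\mathcal{N}$ for a line bundle $\mathcal{N}$ on $\Gamma\cong\quot_{d,d+1}$ (in fact $\mathcal{N}\cong\CL$, though this is not needed). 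Applying $\mathcal{H}om(-,\CO)$ to $0\to\CE'\to\CE\to\iota_*\mathcal{N}\to 0$ and using that $\Gamma$ is a smooth effective Cartier divisor with normal bundle $p_C^*T_C$ (the normal bundle of a graph), one gets $0\to\CE^\vee\to(\CE')^\vee\to\iota_*(\mathcal{N}^\vee\otimes p_C^*T_C)\to 0$; tensoring with $\CK_C$ and using $T_C\otimes K_C\cong\CO_C$ converts the last term into $\iota_*\mathcal{N}^\vee$. This cancellation of the normal bundle against $\CK_C$ is the conceptual reason the canonical bundle appears in \eqref{eqn:quot proj 2}. Since $\CE'\hookrightarrow V$ factors through $\CE$, its dual $V^\vee\to(\CE')^\vee$ factors through $\CE^\vee$, so the composition $V^\vee\otimes\CK_C\to(\CE')^\vee\otimes\CK_C\to\iota_*\mathcal{N}^\vee$ vanishes. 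Restricting the surjection $(\CE')^\vee\otimes\CK_C\twoheadrightarrow\iota_*\mathcal{N}^\vee$ along $\iota$ and identifying $\Gamma\cong\quot_{d,d+1}$ so that $\Gamma\hookrightarrow\quot_{d,d+1}\times C\to\quot_{d+1}\times C$ becomes $p_+\times p_C$, we obtain a surjection of $(p_+\times p_C)^*(\CE^\vee\otimes\CK_C)$ onto the line bundle $\mathcal{N}^\vee$ that annihilates the image of $(p_+\times p_C)^*(V^\vee\otimes\CK_C)$. By the universal property of $\BP_{\quot_{d+1}\times C}(\CE^\vee\otimes\CK_C)$ and of the closed subscheme cut out by the vanishing of the composition with $V^\vee\otimes\CK_C$, this datum is classified by the required $\Phi$, with $\Phi^*\CO(1)=\mathcal{N}^\vee$.

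\emph{The inverse.} Unwind a $T$-point of the target: a map $T\to\quot_{d+1}$ (hence a family $\CE'_T\hookrightarrow V_T$ on $T\times C$ with $V_T/\CE'_T$ flat of relative length $d+1$), a map $x\colon T\to C$ with graph $\iota_x\colon\Gamma_x\hookrightarrow T\times C$ (an effective Cartier divisor), and a surjection onto a line bundle $\iota_x^*((\CE'_T)^\vee\otimes\CK_C)\twoheadrightarrow L$ killing the image of $\iota_x^*(V_T^\vee\otimes\CK_C)$. By adjunction this is the same as a surjection $\tilde q\colon(\CE'_T)^\vee\otimes\CK_C\twoheadrightarrow\iota_{x*}L$ on $T\times C$ killing the image of $V_T^\vee\otimes\CK_C$. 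Put $W=\ker\tilde q$; since $\iota_{x*}L$ has projective dimension $\le 1$, $W$ is locally free, and dualizing $0\to W\to(\CE'_T)^\vee\otimes\CK_C\to\iota_{x*}L\to 0$ and tensoring with $\CK_C$ (the normal-bundle cancellation run backwards) produces $0\to\CE'_T\to\CE_T\to\iota_{x*}L^\vee\to 0$ with $\CE_T:=W^\vee\otimes\CK_C$ locally free. The vanishing of $V_T^\vee\otimes\CK_C$ in $\iota_{x*}L$ lets us factor $V_T^\vee\otimes\CK_C\to(\CE'_T)^\vee\otimes\CK_C$ through $W$; the resulting map $V_T^\vee\otimes\CK_C\to W$ is injective, and its cokernel is the kernel of the natural surjection $\mathcal{E}xt^1(V_T/\CE'_T,\CK_C)\twoheadrightarrow\iota_{x*}L$ between two sheaves flat over $T$ (relative Serre duality along $C$ giving flatness and relative lengths $d+1$, resp.\ $1$). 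Because $\iota_{x*}L$ is flat over $T$, all its higher $\CO_T$-Tor vanish, so this kernel is itself flat over $T$ of relative length $d$; dualizing $V_T^\vee\otimes\CK_C\hookrightarrow W$ and tensoring with $\CK_C$ then yields an embedding $\CE_T\hookrightarrow V_T$ extending $\CE'_T\hookrightarrow V_T$ with $V_T/\CE_T$ flat of relative length $d$. Together with $0\to\CE'_T\to\CE_T\to\iota_{x*}L^\vee\to 0$, this is a $T$-point $\CE'_T\subset\CE_T\subset V_T$ of $\quot_{d,d+1}$. Tracing through the constructions, using double-dual identifications for locally free sheaves, one checks that this assignment is a two-sided inverse to $\Phi$. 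Since $\quot_{d,d+1}$ is smooth projective of dimension $(d+1)r=\dim(\quot_{d+1}\times C)+\mathrm{rank}(\CE^\vee\otimes\CK_C)-\mathrm{rank}(V^\vee\otimes\CK_C)-1$, the honest zero locus of \eqref{eqn:def proj 2} has the expected dimension, hence coincides with the derived one, so $\Phi$ is the asserted isomorphism.

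The main obstacle is the construction of the inverse — concretely, checking that the sheaf $\CE_T$ built from the quotient datum really embeds in $V_T$ with a flat quotient of the correct relative length. This is precisely the step that consumes the defining vanishing condition of the projectivization, and it rests on the small $\mathrm{Tor}$-computation above together with relative Serre duality along $C$; the duality bookkeeping for the pushforward from the graph (with the $\CK_C$-twist) is the only other point that needs care, and it is routine. As a partial alternative to the functorial inverse, one can note that $\Phi$ is proper and bijective on geometric points (both fibres over $(E',x)$ being the space of lines in $\ker(E'|_x\to V|_x)$, where $E/E'\cong\BC_x$), and that the honest zero locus is Cohen--Macaulay of pure dimension once it is known to have the expected dimension; but verifying surjectivity of $\Phi$ this way reduces to the same computation.
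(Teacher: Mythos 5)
Your argument is correct and uses the same underlying idea as the paper — Serre duality relating the extension data $\mathrm{Ext}^1(\mathbb{C}_x, E')$ to $E'_x \otimes K_{C,x}^{-1}$ — but you carry out the scheme-theoretic construction in full, whereas the paper only verifies the identification of the fiber of $p_+ \times p_C$ over a closed point $(\{E' \subset V\}, x)$ and explicitly leaves the ``natural scheme-theoretic generalization as an exercise to the reader.'' Concretely, the paper observes that a flag $E' \subset E \subset V$ with $E/E' \cong \mathbb{C}_x$ corresponds up to scaling to a nonzero element of $\mathrm{Hom}(\mathbb{C}_x, V/E') = \ker\bigl[\mathrm{Ext}^1(\mathbb{C}_x, E') \to \mathrm{Ext}^1(\mathbb{C}_x, V)\bigr]$, and then applies Serre duality once. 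Your proof upgrades this to an isomorphism of functors of points: the forward map $\Phi$ is built from the relative version of the same duality (the cancellation of $p_C^* T_C$ against $\mathcal{K}_C$ on the graph $\Gamma$ being exactly the sheafification of the Serre-duality step), and the inverse is constructed by dualizing back, with the flatness of the resulting quotient verified via relative Serre duality for families of torsion sheaves on the curve. The final dimension count identifying the honest zero locus with the derived one of \eqref{eqn:def proj 2} is the point the paper implicitly assumes when it invokes smoothness. So this is the ``exercise'' worked out — same key lemma, substantially more bookkeeping, and what it buys you is a genuinely complete proof (including the base-change compatibility and the regular-section statement) rather than a closed-point sketch.
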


\begin{proof} We prove the claim on the level of closed points, and leave the natural scheme-theoretic generalization as an exercise to the reader. The fiber of $$p_+ \times p_C: \quot_{d, d+1} \to \quot_{d+1} \times \, C$$ above a point $(\{E' \subset V\}, x)$ parametrizes
$$
E' \subset E \subset V \quad \text{s.t.} \quad E/E' \cong \mathbb{C}_x. 
$$
Any such flag corresponds up to dilation to a non-zero element of
$$
\text{Ext}^0 ({\mathbb C}_x, F') = \text{Ker }\Big[\text{Ext}^1(\mathbb{C}_x,E') \rightarrow \text{Ext}^1(\mathbb{C}_x,V)\Big].
$$
Using Serre duality, this is the same as an element of 
$$
\text{Ker }\Big[\text{Hom}(E' \otimes \mathcal{K}_C^{-1}, \mathbb{C}_x)^\vee \rightarrow \text{Hom}(V \otimes \mathcal{K}_C^{-1}, \mathbb{C}_x)^\vee \Big] 
=\text{Ker }\Big[ E'_x \otimes \mathcal{K}_{C,x}^{-1} \rightarrow  V_x \otimes \mathcal{K}_{C,x}^{-1} \Big]
$$
as prescribed by \eqref{eqn:quot proj 2}.

\comment{Equivalently, the fiber parametrizes $$V^\vee \subset E^\vee \subset {E'}^\vee \quad \text{s.t.} \quad {E'}^\vee/E^\vee \cong \mathbb{C}_x,$$
in other words it parametrizes morphisms ${E'}^\vee \to {\mathbb C}_x$ such that the induced composition $V^\vee \to {\mathbb C}_x$ is zero. By definition \eqref{eqn:def proj 2}, it follows that 
$$\quot_{d,d+1} \cong \mathbb{P}_{\quot_{d+1} \times C}(\mathcal{E}^\vee - V^\vee) \cong \mathbb{P}_{\quot_{d+1} \times C}(\mathcal{E}^\vee \otimes \mathcal{K}_C - V^\vee \otimes \mathcal{K}_C).$$}

\end{proof}

\subsection{An important subbundle}
\label{sub:g}

The nested Quot scheme $\quot_{d,d+1}$ is endowed with a tautological line bundle $\mathcal{L}$, whose fibers are naturally given by
$$
\CL|_{E' \stackrel{x}\hookrightarrow E \subset V} = E_x/E'_x
$$
It is easy to see that $\mathcal{L}$ is identified with $\mathcal{O}(1)$ and $\mathcal{O}(-1)$ under the projectivizations \eqref{eqn:quot proj 1} and \eqref{eqn:quot proj 2}, respectively. The obvious universal locally free sheaves $\mathcal{E}$ and $\mathcal{E}'$ on $\quot_{d,d+1} \times C$ fit into a short exact sequence
\begin{equation}
\label{eqn:ses nested}
0 \rightarrow \CE' \rightarrow \CE \rightarrow \mathcal{L} \otimes \mathcal{O}_{\Delta} \rightarrow 0
\end{equation}
where $\Delta \hookrightarrow \quot_{d,d+1} \times \, C$ denotes the divisor which is the pullback of the diagonal $\Delta \subset C \times C$ under the map $p_C \times 1_C: \quot_{d,d+1} \times \, C \to C \times C.$ If we restrict the short exact sequence \eqref{eqn:ses nested} to $\Delta$, then a simple Tor computation (essentially equivalent to $\text{Tor}_1^C(\BC_x,\BC_x) \cong \mathfrak{m}_x/\mathfrak{m}_x^2$ for a point $x \in C$) shows that we have an exact sequence
\begin{equation}
\label{eqn:tor nested}
0 \rightarrow \mathcal{L} \otimes \mathcal{K}_C \xrightarrow{\alpha} \CE'_\Delta \rightarrow \CE_\Delta \xrightarrow{\beta} \mathcal{L} \rightarrow 0
\end{equation}
of sheaves on $\quot_{d,d+1}$. In the formula above, we abuse notation by writing $\mathcal{K}_C$ for the pull-back of the canonical line bundle to $\quot_{d,d+1}$ via $p_C$. We note that 
$$
\mathcal{G} := \text{Coker }\alpha = \text{Ker }\beta 
$$
is a rank $r-1$ locally free sheaf on $\quot_{d,d+1}$. As a consequence of \eqref{eqn:tor nested}, we have
\begin{equation}
\label{eqn:chern g}
c(\mathcal{G},z) = \frac {c(\CE_\Delta,z)}{z-\lambda} = \frac {c(\CE'_\Delta,z)}{z-\lambda - K_C}  \in H^*(\quot_{d,d+1})[z]
\end{equation}
where we let
\begin{equation}
\label{eqn:lambda}
\lambda = c_1 (\mathcal L) \in H^2(\quot_{d,d+1}) 
\end{equation}
and write $K_C \in H^2(C)$ for the canonical divisor. By abusively ignoring to write down pull-back maps, $K_C$ is a well-defined cohomology class on any variety which maps to $C$, such as $\quot_{d,d+1}$ via the map $p_C$.

\section{A commuting family of operators}  

\subsection{Overview}

The main purpose of the present Section is to prove Proposition \ref{prop:commute a intro}. Recall the operators introduced in \eqref{eqn:formula a intro}, namely
$$
a_k = (p_+ \times p_C)_* \Big( c_k(\mathcal{G}) \cdot p_-^* \Big) : H^*(\quot_d) \rightarrow H^*(\quot_{d+1} \times C),
$$
for any $k \in \{0,\dots,r-1\}$, where the maps $p_\pm$ and $p_C$ are as in \eqref{eqn:diagram zk}. We will show that these operators commute, namely 
\begin{equation}
\label{eqn:commute a}
a_k  a_{k'} = a_{k'} a_k : H^*(\quot_d) \rightarrow H^*(\quot_{d+2} \times C \times C),
\end{equation}
thus establishing Proposition \ref{prop:commute a intro}. Here the map $a_k$ (respectively $a_{k'}$) takes values in the first (respectively second) copy of $C \times C$ on both sides of equation \eqref{eqn:commute a}. Note that equality \eqref{eqn:commute a} is non-trivial even for $k = k'$, due to the implicit switch of the two factors of $C \times C$ between the left and right-hand sides. 

Before starting the proof of Proposition \ref{prop:commute a intro}, let us recall its main application. Let $|0 \rangle$ denote the fundamental class of $\quot_0 = \text{point}$. As a consequence of \eqref{eqn:commute a}, we note that the classes 
\begin{equation}
\label{eqn:basis of cohomolgy}
\Big\{ a_{k_1} \dots a_{k_d}(\gamma) |0 \rangle \in H^*(\quot_d)\Big\}_{k_1,\dots,k_d \in \{0,\dots,r-1\}, \gamma \in H^*(C^d)}
\end{equation}
are unchanged by the simultaneous permutation of $k_i$ and $k_j$ and of the $i$-th and $j$-th factors of $C^d$. In Section \ref{sec:theorem 1}, we will show that the classes \eqref{eqn:basis of cohomolgy} for
$$
r > k_1 \geq \dots \geq k_d \geq 0 \qquad \text{and} \qquad \gamma \in \,\, \mathbb Q\, \text{-basis of } H^*(C^d)_{\Sigma}
$$
yield a basis of $H^*(\quot_d)$, where $\Sigma \subset S(d)$ is the subgroup of permutations generated by those transpositions $(ij)$ such that $k_i = k_j$. Therefore, the Poincar\'{e} polynomial of $\quot_d$ is given by enumerating such classes $a_{k_1} \dots a_{k_d}(\gamma)|0 \rangle$, and it is straightforward to obtain formula \eqref{eqn:poincare}. Let us note that $a_{k_1} \dots a_{k_d}(\gamma)|0 \rangle$ is explicitly given by considering the nested Quot scheme parametrizing
$$
\quot_{0,1,\dots,d} = \Big \{ E_0 \stackrel{x_1}\hookrightarrow E_1 \stackrel{x_2}\hookrightarrow \dots \stackrel{x_d}\hookrightarrow E_d = V \Big\}
$$
and pushing forward $c_{k_1}(\mathcal{G}_1) \dots c_{k_d}(\mathcal{G}_d) \rho^*(\gamma)$ from $\quot_{0,1,\dots,d}$ to $\quot_d$ via the natural forgetful map (here $\rho : \quot_{0,1,\dots,d} \to C^d$ be the map which remembers $(x_1,\dots,x_d)$). 

\subsection{Two-step nested Quot schemes}
\label{sub:two-step}

For the setup of Proposition \ref{prop:commute a intro}, as well as some of the Yangian identities that we will establish in Section \ref{sec:rep theory}, we need to consider the two-step nested Quot scheme
\begin{equation}
\label{eqn:def two-step nested quot scheme}
\quot_{d,d+1,d+2} = \Big\{ E'' \stackrel{y}\hookrightarrow E' \stackrel{x}\hookrightarrow E \subset V  \Big\}
\end{equation}
where the right-most inclusion has colength $d$. 
It comes equipped with nested universal locally free sheaves
$$
{\mathcal E}'' \hookrightarrow \mathcal E' \hookrightarrow \mathcal E \subset \rho^*(V) \, \, \, \text{on} \, \, \, \quot_{d, d+1, d+2} \times \, C
$$
where $\rho : \quot_{d,d+1,d+2} \times C \to C$ is the natural projection. We view the three universal subsheaves above as pulled back under the inclusion
$$
\quot_{d, d+1, d+2} \subset \quot_d \times \quot_{d+1} \times \quot_{d+2}
$$
We note also the natural morphism $$p_1 \times p_2: \, \quot_{d, d+1, d+2} \to C \times C, \qquad [E'' \stackrel{y}\hookrightarrow E' \stackrel{x}\hookrightarrow E \subset V]\, \, \,  \mapsto \, \, \,  (x, y). $$
There are two associated diagonal divisors $$\Delta_1, \Delta_2 \, \subset \, \quot_{d, d+1, d+2} \times C,$$ obtained by pulling back the diagonal $\Delta \subset C \times C$ under the maps $$p_1 \times 1_C, \, p_2 \times 1_C: \, \quot_{d, d+1, d+2} \, \times \, C \to C \times C.$$
By analogy with Lemma \ref{lemma:projectivizations}, it is clear that 
\begin{equation}
\label{eqn:two step quot as proj}
\quot_{d,d+1,d+2} = {\mathbb P}_{\quot_{d,d+1} \times C}(\mathcal{E}').
\end{equation}
Thus $\quot_{d, d+1, d+2}$ is a double projective bundle over $\quot_d \times C \times C$, in particular a smooth projective variety of dimension $r (d+2).$ 
Thanks to the double projective tower structure, there are two hyperplane line bundles $$\mathcal L_1, \mathcal L_2 \, \, \to \, \, \quot_{d,d+1,d+2},$$parametrizing the quotients $E_x/E'_x$, $E'_y/E''_y$, respectively. It is easy to see that $\mathcal L_2$ coincides with $\mathcal O(1)$ under the identification \eqref{eqn:two step quot as proj}, while $\mathcal L_1$ is pulled back from $\quot_{d,d+1}$.

We note the universal short exact sequences
\begin{equation}
\label{eqn:L12}
 0 \to \mathcal E' \to \mathcal E \to \mathcal L_1 \otimes \mathcal O_{\Delta_1} \to 0 \, \, \, \text{and} \, \, \, 0 \to \mathcal E'' \to \mathcal E' \to \mathcal L_2 \otimes \mathcal O_{\Delta_2} \to 0
 \end{equation}
on $\quot_{d, d+1, d+2} \times \, C,$
and the accompanying short exact sequences
\begin{equation}
\label{eqn:G12}
0 \to \mathcal G_1 \to {\mathcal E}_{\Delta_1} \to \mathcal L_1  \to 0 \, \, \, \text{and} \, \, \, 0 \to \mathcal G_2 \to {\mathcal E}_{\Delta_2}' \to \mathcal L_2  \to 0
\end{equation}
on $\quot_{d, d+1, d+2}.$
From \eqref{eqn:L12} we conclude
\begin{equation}
\label{eqn:chernL12}
c( \mathcal E', z) = c(\mathcal E, z) \cdot \frac{z-\lambda_1 + \Delta_1}{z- \lambda_1} \, \,\, \text{and} \, \, \, c( \mathcal E'', z) = c(\mathcal E', z) \cdot \frac{z-\lambda_2 + \Delta_2}{z- \lambda_2}
\end{equation}
in $H^* (\quot_{d, d+1, d+2} \times C).$ Using \eqref{eqn:G12}, we can rewrite these equations as
\begin{equation}
\label{eqn:Gdiag}
c( \mathcal E', z) - c(\mathcal E, z) = c(\mathcal G_1, z) \cdot \Delta_1 \, \,\, \text{and} \, \, \, c( \mathcal E'', z) - c(\mathcal E', z) =  c(\mathcal G_2, z) \cdot  \Delta_2.
\end{equation}
Finally from \eqref{eqn:chernL12} we record for future use the identity
\begin{equation}
\label{eqn:chernL12together}
c( \mathcal E'', z) = c( \mathcal E, z) \cdot \frac{z-\lambda_1 + \Delta_1}{z- \lambda_1} \cdot \frac{z-\lambda_2 + \Delta_2}{z- \lambda_2}
\end{equation}
in  $H^* (\quot_{d, d+1, d+2} \times C).$

\subsection{The scheme $\quot_{d, d+2}$}
\label{sub:quot d d+2}

We consider now the nested Quot scheme $$\quot_{d, d+2} = \{ E'' \hookrightarrow E \subset V, \, \, \, \text{with length } V/E = d, \, \, \, \text{length } V/E'' = d+2 \}.$$
For each point in $\quot_{d, d+2}$ the quotient $T = E/E''$ has length 2. There is accordingly a universal short exact sequence $$0 \to \mathcal E'' \to \mathcal E \to \mathcal T \to 0 \, \, \, \text{on} \, \, \, \quot_{d, d+2} \times \, C,$$ for a universal torsion sheaf $\mathcal T$ of relative length 2. The natural morphism 
$$\tau: \quot_{d, d+1, d+2} \to \quot_{d, d+2}, \qquad  [E'' \stackrel{y}\hookrightarrow E' \stackrel{x}\hookrightarrow E \subset V]\, \, \,  \mapsto \, \, \, [E'' \hookrightarrow E \subset V]$$
that forgets the middle sheaf is generically $2:1$. 
We may also consider the map
$$\tau \times p_1 \times p_2:  \quot_{d, d+1, d+2} \to \quot_{d, d+2} \times C \times C,$$ $$  [E'' \stackrel{y}\hookrightarrow E' \stackrel{x}\hookrightarrow E \subset V]\, \,   \mapsto \, \,  [E'' \hookrightarrow E \subset V, \, x, \, y],$$
which records in addition the two points at which $E/E'$ and $E'/ E''$ are supported. 

We observe that given a point $[E'' \hookrightarrow E \subset V] \in \quot_{d, d+2}$, specifying a middle sheaf $E'$ is equivalent to specifying a surjective morphism $T \to \mathbb C_x$ for $x \in \text{supp}\, T.$ (Here we set as before $T = E/E''$). It follows easily that $$\quot_{d, d+1, d+2} \cong \mathbb P_{ \quot_{d, d+2} \times C} (\mathcal E - \mathcal E'')\,  \stackrel{\tau \times p_1}\longrightarrow \, \quot_{d, d+2} \times C,$$
thus the map $\tau \times p_1$
realizes $\quot_{d, d+1, d+2}$ as the projectivization
$\mathbb P_{ \quot_{d, d+2} \times C} (\mathcal E - \mathcal E''),$ with hyperplane line bundle $\mathcal L_1 \to \quot_{d, d+1, d+2}.$
The general pushforward formula \eqref{eqn:segre 2} now gives
\begin{equation}
\label{eqn:tau_p1}
(\tau\times p_1)_* \left [ \frac{1}{z-\lambda_1} \right ] = \left[ \frac{c (\mathcal E'', z)}{c (\mathcal E, z)}\right]_{z^{<0}} = \frac{c (\mathcal E'', z)}{c (\mathcal E, z)} -1 
\end{equation}
under $$\tau \times p_1: \quot_{d, d+1, d+2} \to \quot_{d, d+2} \times \, C.$$
This will help us establish the following.
\begin{lemma}
\label{lemma:pushtau}
Under the map $\tau \times 1_C: \quot_{d, d+1, d+2} \times \, C \rightarrow \quot_{d, d+2} \times\,  C,$ we have 
\begin{equation}
\label{eqn:lemma 2}
(\tau \times 1_C)_* \, c(\mathcal E', z) = c(\mathcal E, z) + c (\mathcal E'', z)
\end{equation}
\end{lemma}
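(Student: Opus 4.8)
The plan is to compute the pushforward $(\tau \times 1_C)_* c(\mathcal{E}', z)$ by pulling back the class of everything to $\quot_{d,d+1,d+2} \times C$ and using the projection formula relative to $\tau \times 1_C$. The key input is that $\tau \times p_1 : \quot_{d,d+1,d+2} \to \quot_{d,d+2} \times C$ realizes the left side as a projective bundle $\mathbb{P}_{\quot_{d,d+2} \times C}(\mathcal{E} - \mathcal{E}'')$ with hyperplane bundle $\mathcal{L}_1$, so that the pushforward formula \eqref{eqn:tau_p1} holds. The obstacle is that \eqref{eqn:tau_p1} controls $(\tau \times p_1)_*$ applied to powers of $\lambda_1$, not to the full Chern polynomial $c(\mathcal{E}', z)$, which a priori involves other classes; so the first task is to express $c(\mathcal{E}', z)$ (as a class on $\quot_{d,d+1,d+2} \times C$) in terms of $c(\mathcal{E}, z)$, $c(\mathcal{E}'', z)$ (both pulled back from $\quot_{d,d+2} \times C$) and $\lambda_1$.

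First I would use \eqref{eqn:chernL12}, namely $c(\mathcal{E}', z) = c(\mathcal{E}, z) \cdot \frac{z - \lambda_1 + \Delta_1}{z - \lambda_1}$. The subtlety is that the relevant $C$ here is the last factor (the one not forgotten by $\tau$), and $\Delta_1 \subset \quot_{d,d+1,d+2} \times C$ is the pullback of the diagonal under $p_1 \times 1_C$; meanwhile $\lambda_1$ is pulled back from $\quot_{d,d+1}$, hence from $\quot_{d,d+1,d+2}$, hence constant along the last $C$. Rearranging, $c(\mathcal{E}', z) = c(\mathcal{E}, z) + c(\mathcal{E}, z) \cdot \frac{\Delta_1}{z - \lambda_1}$. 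Since $\Delta_1$ is the class of a section of $\tau \times p_1$ (set-theoretically the locus where the support point $x$ equals the $C$-coordinate), and more usefully since $c(\mathcal{E},z) \cdot \Delta_1 = c(\mathcal{E}_{\Delta_1}, z) \cdot \Delta_1$ with $c(\mathcal{E}_{\Delta_1}, z) = (z - \lambda_1) c(\mathcal{G}_1, z)$ by \eqref{eqn:G12}, we can also write $c(\mathcal E', z) - c(\mathcal E, z) = c(\mathcal{G}_1, z)\Delta_1$, which is exactly the first identity in \eqref{eqn:Gdiag}.

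Now I would apply $(\tau \times 1_C)_*$ to the identity $c(\mathcal{E}', z) = c(\mathcal{E}, z) + c(\mathcal{G}_1, z) \cdot \Delta_1$. The term $c(\mathcal{E}, z)$ is pulled back from $\quot_{d,d+2} \times C$ and $\tau$ is generically $2:1$, but $c(\mathcal E, z)$ itself is pulled back along $\tau \times 1_C$ only after we observe $\mathcal E$ on $\quot_{d,d+1,d+2}\times C$ is the pullback of $\mathcal E$ on $\quot_{d,d+2}\times C$ — wait, here I must be careful: $(\tau \times 1_C)_* (\tau \times 1_C)^* \alpha = \alpha \cdot (\tau\times 1_C)_*(1)$, and $(\tau \times 1_C)_*(1) = 0$ for dimension reasons (the fibers of $\tau$ are positive-dimensional), so actually the $c(\mathcal E,z)$ term does \emph{not} survive naively. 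The correct bookkeeping is to instead push forward the rearranged form via $\Delta_1$: because $\Delta_1 \cong \quot_{d,d+1,d+2} \times_C C \cong \quot_{d,d+1,d+2}$, the restriction $\Delta_1 \to \quot_{d,d+2} \times C$ is precisely $\tau \times p_1$, so $(\tau \times 1_C)_*(c(\mathcal{G}_1, z) \cdot \Delta_1) = (\tau \times p_1)_*(c(\mathcal{G}_1, z))$ regarded in $H^*(\quot_{d,d+2} \times C)$. Using $c(\mathcal{G}_1, z) = c(\mathcal{E}_{\Delta_1}, z)/(z - \lambda_1)$ and $c(\mathcal{E}_{\Delta_1}, z) = (\tau \times p_1)^* c(\mathcal{E}, z)$ (restriction of $\mathcal E$ to the diagonal equals its pullback along $\tau \times p_1$), the projection formula plus \eqref{eqn:tau_p1} give $(\tau \times p_1)_*(c(\mathcal{G}_1, z)) = c(\mathcal{E}, z) \cdot \left[\frac{c(\mathcal{E}'', z)}{c(\mathcal{E}, z)} - 1\right] = c(\mathcal{E}'', z) - c(\mathcal{E}, z)$. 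Therefore $(\tau \times 1_C)_* c(\mathcal{E}', z) = (\tau \times 1_C)_* c(\mathcal E, z) + c(\mathcal{E}'', z) - c(\mathcal{E}, z)$; and again $(\tau \times 1_C)_* c(\mathcal E, z)$ must be re-examined — the cleanest route is to apply $(\tau \times 1_C)_*$ directly to \eqref{eqn:chernL12} written as $c(\mathcal{E}', z)(z - \lambda_1) = c(\mathcal{E}, z)(z - \lambda_1 + \Delta_1)$, i.e. $(z-\lambda_1)(c(\mathcal E',z) - c(\mathcal E,z)) = c(\mathcal E,z)\Delta_1$, push forward, and combine with \eqref{eqn:tau_p1}; I expect this to collapse to \eqref{eqn:lemma 2} after using that on $\Delta_1$ one has $\lambda_1$-powers governed by \eqref{eqn:tau_p1}. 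The main obstacle is precisely this careful handling of which classes are pulled back from where and the fact that $\tau$ is not finite, so that bare pushforwards of pulled-back classes vanish and everything must be routed through the divisor $\Delta_1$ on which $\tau$ restricts to the honest map $\tau \times p_1$; once that is set up, the computation is a one-line application of \eqref{eqn:tau_p1}.
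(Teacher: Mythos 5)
There is a genuine error that prevents your argument from closing, and it is an internal inconsistency in your own reasoning. You correctly note that $\tau$ is generically $2:1$, and then in the very next clause you assert ``$(\tau \times 1_C)_*(1) = 0$ for dimension reasons (the fibers of $\tau$ are positive-dimensional).'' These two statements contradict each other. Since $\quot_{d,d+1,d+2} \cong \mathbb P_{\quot_{d,d+2}\times C}(\mathcal E - \mathcal E'')$ and $\mathcal E,\mathcal E''$ both have rank $r$, the relative dimension of $\tau\times p_1$ is $r - r - 1 = -1$; equivalently $\dim\quot_{d,d+1,d+2} = \dim\quot_{d,d+2}$. So $\tau$ (and hence $\tau\times 1_C$) is generically finite of degree $2$, its fibers are generically $0$-dimensional, and $(\tau \times 1_C)_*(1) = 2$, \emph{not} $0$. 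Because of this error you never finish: you reduce to $(\tau\times 1_C)_* c(\mathcal E',z) = (\tau\times 1_C)_* c(\mathcal E,z) + c(\mathcal E'',z) - c(\mathcal E,z)$ and then say the remaining term ``must be re-examined'' and that you ``expect this to collapse'' after further work — that is a gap, not a proof.

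Once you correct the degree computation, your argument is exactly the paper's and the missing step is one line. You already verified, correctly, that
\begin{equation*}
(\tau\times 1_C)_*\bigl(c(\mathcal G_1,z)\cdot\Delta_1\bigr) \;=\; (\tau\times p_1)_*\bigl(c(\mathcal G_1,z)\bigr) \;=\; c(\mathcal E,z)\cdot\Bigl(\tfrac{c(\mathcal E'',z)}{c(\mathcal E,z)} - 1\Bigr) \;=\; c(\mathcal E'',z) - c(\mathcal E,z),
\end{equation*}
using that $\Delta_1 \to \quot_{d,d+2}\times C$ restricts $\tau\times 1_C$ to $\tau\times p_1$ and that $\mathcal E_{\Delta_1} = (\tau\times p_1)^*\mathcal E$. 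Combined with the projection formula, $(\tau\times 1_C)_* c(\mathcal E,z) = c(\mathcal E,z)\cdot(\tau\times 1_C)_*(1) = 2\,c(\mathcal E,z)$, which yields $(\tau\times 1_C)_* c(\mathcal E',z) = 2\,c(\mathcal E,z) + c(\mathcal E'',z) - c(\mathcal E,z) = c(\mathcal E,z) + c(\mathcal E'',z)$. This is precisely the paper's proof (the paper factors out $c(\mathcal E,z)$ first and computes $(\tau\times 1_C)_*[1 + \Delta_1/(z-\lambda_1)] = 2 + (c(\mathcal E'',z)/c(\mathcal E,z) - 1)$, which is the same thing). You had the right decomposition and the right use of \eqref{eqn:tau_p1}; the only thing that went wrong was forgetting that $\tau$ is generically finite, something you yourself had just stated.
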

\begin{proof}
We calculate
\begin{eqnarray*}
(\tau \times 1_C)_* \, c(\mathcal E', z) &=& (\tau \times 1_C)_* \left [ c(\mathcal E, z) \left ( 1 + \frac{\Delta_1}{z-\lambda_1} \right ) \right ] \\
& = & c(\mathcal E, z) \cdot (\tau \times 1_C)_* \left [  1 + \frac{\Delta_1}{z-\lambda_1} \right ] \\
& = & c(\mathcal E, z)  \left [  2 + \frac{c (\mathcal E'', z)}{c (\mathcal E, z)} -1 \right ] \\
& = & c(\mathcal E, z) + c (\mathcal E'', z).
\end{eqnarray*}
(the third equality is due to the fact that $\tau$ is generically $2:1$, and the fact that restricting the map $\tau \times 1_C$ to the divisor $\Delta_1$ identifies it with $\tau \times p_1$). 

\end{proof}
Since $\tau$ is generically $2:1$, formula \eqref{eqn:lemma 2} is equivalent to
\begin{equation}
\label{eqn:lemma 2 equiv}
(\tau \times 1_C)_* \, [c(\mathcal E', z) - c (\mathcal E, z) ]= c(\mathcal E'', z) - c (\mathcal E, z).
\end{equation}

\subsection{The proof of commutativity} We are now ready to prove Proposition \ref{prop:commute a intro}.
Let us combine the operators \eqref{eqn:formula a intro} into a generating polynomial 
\begin{equation}
\label{eqn:formula a polynomial}
a(z) = \sum_{k=0}^{r-1} z^{r-1-k} (-1)^k a_k : H^*(\quot_d) \rightarrow H^*(\quot_{d+1} \times C)
\end{equation}
With the notation as in Subsection \ref{sub:g}, we have 
\begin{equation}
\label{eqn:def a}
a(z) = (p_+ \times p_C)_* \Big(c(\mathcal{G},z) \cdot p_-^*\Big) 
\end{equation}

\begin{proof} \emph{of Proposition \ref{prop:commute a intro}:} We need to prove the identity
\begin{equation}
\label{eqn:a series commute}
a(z) a(w) = a(w) a(z),
\end{equation}
where on both sides of the equation, the operators making up the $w$-series (respectively the $z$-series) take values in the first (respectively second) factor of the product $C \times C$. 
We establish \eqref{eqn:a series commute} as an equality of correspondences on $\quot_{d, d+2} \times C \times C,$ by showing
$$
(\tau \times p_1 \times p_2)_* \left ( c (\mathcal G_1, w) \cdot c (\mathcal G_2, z) \right ) =  (\tau \times p_2 \times p_1)_* \left (c (\mathcal G_1, z) \cdot c (\mathcal G_2, w) \right ).
$$
As an artifice, we may write  
$$(\tau \times p_1 \times p_2)_* ( c (\mathcal G_1, w) \cdot c (\mathcal G_2, z)) = (\tau \times 1_C \times 1_C)_* ( c(\mathcal G_1, w) \cdot c (\mathcal G_2, z) \cdot  \Delta_1 \cdot \Delta_2 ),$$
where $$\tau \times 1_C \times 1_C: \quot_{d,d+1, d+2} \, \times \, C \times \, C \to \quot_{d, d+2} \times C \times C,$$
and $\Delta_1$ (respectively $\Delta_2$) is the divisor in $\quot_{d,d+1,d+2} \times C \times C$ obtained by identifying the image of $p_1$ (respectively $p_2$) with the first (respectively second) extraneous copy of $C$. This enables us to use \eqref{eqn:Gdiag} to write
$$
(\tau \times p_1 \times p_2)_* ( c(\mathcal G_1, w) \cdot c (\mathcal G_2, z) ) = (\tau \times 1_C \times 1_C)_* \Big [ (c({\mathcal E}', w) -  c(\mathcal E, w) ) \cdot  (c ({\mathcal E}'', z ) -  c (\mathcal E', z))   \Big ]
$$
Above and throughout the argument, the universal sheaves $\CE,\CE',\CE''$ appearing in all $w$-Chern series (respectively $z$-Chern series) correspond to the first (respectively the second) factor of $C$ in the product $\quot_{d,d+1,d+2} \times C \times C$.
We have analogously
$$(\tau \times p_2 \times p_1)_* \left (c (\mathcal G_1, z) \cdot c (\mathcal G_2, w) \right )  = (\tau \times 1_C \times 1_C)_* \Big [  \left (c (\mathcal E', z) -  c (\mathcal E, z) \right ) \cdot  \left (c (\mathcal E'', w ) -  c (\mathcal E', w)\right ) \Big ]
$$
We therefore evaluate the difference 
$$(\tau \times p_1 \times p_2)_* ( c(\mathcal G_1, w) \cdot c (\mathcal G_2, z) ) - (\tau \times p_2 \times p_1)_* \left (c (\mathcal G_1, z) \cdot c (\mathcal G_2, w) \right )$$ to be 
$$(\tau \times 1_C\times 1_C)_* \Big [ (c (\mathcal E', w) - c (\mathcal E, w) )  (c (\mathcal E'', z) -  c (\mathcal E', z)) -  (c (\mathcal E', z) -  c (\mathcal E, z) ) (c (\mathcal E'', w ) -  c (\mathcal E', w)  ) \Big ] 
$$
\begin{multline*} 
=  (\tau \times 1_C \times 1_C)_* \Big [  \left (c (\mathcal E', w) -  c  (\mathcal E, w)\right ) \cdot  c  ({\mathcal E'', z}) + c (\mathcal E, w)\cdot c  (\mathcal E', z) -  \\  -  \left (c (\mathcal E', z) -  c (\mathcal E, z)\right ) \cdot  c (\mathcal E'', w ) - c (\mathcal E, z) \cdot c (\mathcal E', w) \Big ].
\end{multline*}
Invoking formulas \eqref{eqn:lemma 2} and \eqref{eqn:lemma 2 equiv}, the quantity above equals
\begin{align*}
&( c (\mathcal E'', w) -  c (\mathcal E, w)) \cdot  c (\mathcal E'', z ) + c (\mathcal E, w)\cdot (c  (\mathcal E'', z) + c (\mathcal E, z)) -\\
&-(c (\mathcal E'', z)-  c (\mathcal E, z)) \cdot  c (\mathcal E'', w ) -  c (\mathcal E, z) \cdot ( c (\mathcal E'', w ) + c (\mathcal E, w) )= 0 
\end{align*}
as required.

\end{proof}

\section{The Yangian action}
\label{sec:rep theory}

\subsection{The commutation relations}

Recall that we combined the creation/annihilation operators of \eqref{eqn:formula e intro}--\eqref{eqn:formula f intro} into power series
\begin{align}
&e(z) = \sum_{k = 0}^{\infty} \frac {e_k}{z^{k+1}} = (p_+ \times p_C)_* \left[\frac 1{z-\lambda} \cdot p_-^* \right] \label{eqn:operator e} : H^*(\quot_d) \rightarrow H^*(\quot_{d+1} \times C) \\
&f(z) = \sum_{k = 0}^{\infty} \frac {f_k}{z^{k+1}} = (p_- \times p_C)_* \left[\frac 1{z-\lambda} \cdot p_+^* \right] : H^*(\quot_{d+1}) \rightarrow H^*(\quot_{d} \times C) \label{eqn:operator f} 
\end{align}
where $\lambda = c_1(\mathcal{L}) \in H^2(\quot_{d,d+1})$. Similarly, we combined the multiplication operators \eqref{eqn:operator m intro} into the following degree $r$ monic polynomial in $z$
\begin{equation}
\label{eqn:operator m}
m(z) =\text{multiplication by } c(\mathcal{E},z)  : H^*(\quot_d) \rightarrow H^*(\quot_{d} \times C)
\end{equation}
Recall the following power series in $z^{-1}$
\begin{equation}
\label{eqn:operator h}
h(z) = \frac {c(V,z+K_C)}{m(z)m(z+K_C)} = \text{multiplication by } \frac {c(V,z+K_C)}{c(\mathcal{E},z)c(\mathcal{E},z+K_C)} 
\end{equation}
When $V$ is a vector bundle on a curve, the numerator $c(V,z)$ in ordinary cohomology is equal to $z^r - z^{r-1} c_1(V)$. However, everything that will follow also holds in equivariant cohomology, in which case $c(V,z)$ may be an arbitrary degree $r$ polynomial. 

\begin{proof} \emph{of Theorem \ref{theorem:yangian}:} We follow ideas developed in \cite{negut2, negut4} in the context of moduli spaces of sheaves on surfaces. Relation \eqref{eqn:rel mm_quot} is obvious, as all multiplication operators commute. Let us first prove \eqref{eqn:rel ee_quot}, namely
$$
\Big[ e (z) e (w) ( z- w + \delta) \Big]_{z^{<0},w^{<0}} = \Big[ e (w) e (z) (z-w-\delta) \Big]_{z^{<0},w^{<0}}.
$$
Just as in the case of the $a$-operators, we agree that on both sides of this equation, the operators making up the $w$-series (respectively the $z$-series) take values in the first (respectively second) factor of the product $C \times C$. 
We will establish the identity displayed above as an equality of correspondences on $\quot_{d, d+2} \times C \times C.$ 
To start, we note that
\begin{equation}
\label{eqn:e e as a correspondence}
e (z) e (w) ( z- w + \delta) = (z - w + \delta) \cdot  (\tau \times p_1 \times p_2)_* \left [ \frac{1}{w- \lambda_1} \cdot \frac{1}{z-\lambda_2} \right ]
\end{equation}
with the notation as in Subsection \ref{sub:quot d d+2}. It will be more convenient to use the morphism $$\tau \times p_1 \times 1_C: \, \quot_{d, d+1, d+2} \, \times \, C \to \quot_{d, d+2} \times C \times C,$$ noting that 
for any class $\gamma \in H^* (\quot_{d, d+1, d+2}),$ we have 
$$ (\tau \times p_1 \times p_2)_*(\gamma) = (\tau \times p_1 \times 1_C)_* (\gamma \cdot \Delta_2),$$where on the right hand side $\gamma$ is pulled back to the product $\quot_{d, d+1, d+2} \times \, C.$
We are thus able to rewrite \eqref{eqn:e e as a correspondence} as 
$$
e (z) e (w) ( z- w + \delta) = (z - w + \delta) \cdot (\tau \times p_1 \times 1_C)_* \left [ \frac{1}{w- \lambda_1} \cdot \frac{\Delta_2}{z-\lambda_2}  \right ]
$$
From \eqref{eqn:chernL12together}, we now express
$$\frac{\Delta_2}{z - \lambda_2} = \frac{c ({\mathcal E}'', z)}{c({\mathcal E}, z)} \cdot \frac{z-\lambda_1} {z-\lambda_1 + \Delta_1} -1 \, \, \text{on} \, \, \quot_{d, d+1, d+2} \times C,$$
and write 
$$
e (z) e (w) ( z- w + \delta) = (z - w + \delta) \cdot (\tau \times p_1 \times 1_C)_* \left [  \frac{1}{w- \lambda_1} \cdot \left [\frac{c ({\mathcal E}'', z)}{c({\mathcal E}, z)} \cdot \frac{z-\lambda_1} {z-\lambda_1 + \Delta_1} -1 \right ] \right ]$$
$$= (z - w + \delta) \cdot (\tau \times p_1 \times 1_C)_* \left [ \frac{1}{w- \lambda_1} \cdot \left [\left (\frac{c ({\mathcal E}'', z)}{c({\mathcal E}, z)}-1 \right ) - \frac{c ({\mathcal E}'', z)}{c({\mathcal E}, z)} \cdot  \frac{\Delta_1} {z-\lambda_1 + \Delta_1} \right ] \right ].
$$
To evaluate this pushforward expression on $\quot_{d, d+2} \times C \times C, $ we make essential use of \eqref{eqn:tau_p1}. We also observe that the divisor $\Delta_1 \in H^2(\quot_{d, d+1, d+2} \times C)$ is the pullback of the diagonal class $\delta \in H^2(\quot_{d, d+2} \times C \times C)$ under $\tau \times p_1 \times 1_C.$ Finally, the universal sheaves $\mathcal E, \, \mathcal E''$ in the $w$-series (respectively $z$-series) are always associated with the first (respectively second) curve factor in the product $\quot_{d, d+2} \times C \times C$; we leave this out of the notation for simplicity. We thus obtain that $e (z) e (w) ( z- w + \delta)$ is
$$ (z-w + \delta) \left [\frac{c ({\mathcal E}'', w)}{c({\mathcal E}, w)} - 1 \right ] \left [\frac{c ({\mathcal E}'', z)}{c({\mathcal E}, z)} - 1 \right ] - \, \delta \, \cdot \, \frac{c ({\mathcal E}'', z)}{c({\mathcal E}, z)}  (\tau \times p_1 \times 1_C)_* \frac{z-w + \Delta_1}{(w-\lambda_1)(z-\lambda_1 + \Delta_1)}$$
$$= (z-w + \delta) \left [\frac{c ({\mathcal E}'', w)}{c({\mathcal E}, w)} - 1 \right ] \left [\frac{c ({\mathcal E}'', z)}{c({\mathcal E}, z)} - 1 \right ] - \, \delta \, \cdot \, \frac{c ({\mathcal E}'', z)}{c({\mathcal E}, z)}  (\tau \times p_1 \times 1_C)_* \left [ \frac{1}{w-\lambda_1} - \frac{1}{z- \lambda_1 + \Delta_1} \right ]$$
$$= (z-w + \delta)  \left [\frac{c ({\mathcal E}'', w)}{c({\mathcal E}, w)} - 1 \right ]  \left [\frac{c ({\mathcal E}'', z)}{c({\mathcal E}, z)} - 1 \right ] - \, \delta \, \cdot \, \frac{c ({\mathcal E}'', z)}{c({\mathcal E}, z)}  \left [ \frac{c ({\mathcal E}'', w)}{c({\mathcal E}, w)} -  \frac{c ({\mathcal E}'', z + \delta)}{c({\mathcal E}, z + \delta)}\right ]$$ 
on $\quot_{d, d+2} \times C \times C.$ 

Turning now to the right-hand side $e (w) e (z) ( z- w - \delta),$ we write analogously
\begin{eqnarray*}
e (w) e (z) ( z- w - \delta) &=& (z - w - \delta) \cdot (\tau \times p_2 \times p_1)_* \left [ \frac{1}{z- \lambda_1} \cdot \frac{1}{w-\lambda_2}  \right ]\\
& = &(z - w - \delta) \cdot (\tau \times 1_C \times p_1)_* \left [ \frac{1}{z- \lambda_1} \cdot \frac{\Delta_2}{w-\lambda_2}  \right ].
\end{eqnarray*}
We obtain
$$e (w) e (z) ( z- w - \delta) = (z - w - \delta) \cdot (\tau \times \, 1_C \, \times \, p_1)_* \left [  \frac{1}{z- \lambda_1} \cdot \left [\frac{c ({\mathcal E}'', w)}{c({\mathcal E}, w)} \cdot \frac{w-\lambda_1} {w-\lambda_1 + \Delta_1} -1 \right ] \right ]$$
$$ = (z-w - \delta) \left [\frac{c ({\mathcal E}'', z)}{c({\mathcal E}, z)} - 1 \right ] \left [\frac{c ({\mathcal E}'', w)}{c({\mathcal E}, w)} - 1 \right ] - \, \delta \, \cdot \, \frac{c ({\mathcal E}'', w)}{c({\mathcal E}, w)}  (\tau \times 1_C \times p_1)_* \frac{z-w - \Delta_1}{(z- \lambda_1)(w- \lambda _1 + \Delta_1)}$$
$$ = (z-w - \delta) \left [\frac{c ({\mathcal E}'', z)}{c({\mathcal E}, z)} - 1 \right ] \left [\frac{c ({\mathcal E}'', w)}{c({\mathcal E}, w)} - 1 \right ] - \, \delta \, \cdot \, \frac{c ({\mathcal E}'', w)}{c({\mathcal E}, w)}  (\tau \times 1_C \times p_1)_* \left [ \frac{1}{w - \lambda_1 +\Delta_1} - \frac{1}{z- \lambda _1 } \right ]$$
$$ = (z-w - \delta) \left [\frac{c ({\mathcal E}'', z)}{c({\mathcal E}, z)} - 1 \right ] \left [\frac{c ({\mathcal E}'', w)}{c({\mathcal E}, w)} - 1 \right ] - \, \delta \, \cdot \, \frac{c ({\mathcal E}'', w)}{c({\mathcal E}, w)}  \left [ \frac{c ({\mathcal E}'', w +\delta)}{c({\mathcal E}, w + \delta)} -  \frac{c ({\mathcal E}'', z)}{c({\mathcal E}, z)}\right ].$$
The expression above is a cohomology class on $\quot_{d, d+2} \times C \times C$; as before, we recall that the universal sheaves $\mathcal E, \mathcal E''$ appearing in the $w$-Chern series (respectively $z$-series) are considered on the first (respectively second) $C$ factor. The difference of the two series $e(z) e(w) (z-w + \delta) - e(w) e (z) (z-w - \delta)$ is now easily seen to be 
\begin{equation}
\label{eqn:nozw}
 \delta \cdot [ g(z) + g (w)] \in H^*( \quot_{d, d+2} \times\,  C \times \, C),
\end{equation}  
where 
$$g (z) = 1  - 2 \, \frac{c ({\mathcal E}'', z)}{c({\mathcal E}, z)}  + \frac{c ({\mathcal E}'', z)}{c({\mathcal E}, z)} \cdot \frac{c ({\mathcal E}'', z + \delta)}{c({\mathcal E}, z + \delta)}.$$
Since \eqref{eqn:nozw} contains no mixed terms $z^i w^j$ with  $i, j < 0,$ we have established \eqref{eqn:rel ee_quot}. Relation \eqref{eqn:rel ff_quot} is proved identically, as it involves the same correspondences. 

We next prove \eqref{eqn:rel me_quot}, namely the equality
\begin{equation}
\label{eqn:m and e}
m(w)e(z) = \left[ e(z)m(w) \frac {w-z+\delta}{w-z} \right]_{w \gg z|z^{<0}, w^{\geq 0}}
\end{equation}
of power series of operators $H^*(\quot_d) \rightarrow H^*(\quot_{d+1} \times C \times C)[[z^{-1}]][w]$. The symbol $[\dots]_{w \gg z|z^{<0},w^{\geq 0}}$ means that we expand the RHS as a power series in $w \gg z$, and from the result only retain the negative powers of $z$ and the non-negative powers of $w$. The short exact sequence \eqref{eqn:ses nested} on $\quot_{d,d+1} \times C$ implies the equality of Chern polynomials
$$
c(\mathcal{E}',w) = c(\mathcal{E},w) \cdot \frac {w-\lambda+\delta}{w-\lambda} \in H^*(\quot_{d,d+1} \times C)
$$
The formula above will allow us to compute $m(w) e(z)$ and $e(z) m(w)$ as correspondences in $H^*(\quot_{d,d+1} \times C)$, with the latter factor of $C$ corresponding to the operator $m(w)$. Specifically,
\begin{align*}
&m(w) e(z) \quad \text{is given by the correspondence} \quad \frac {c(\mathcal{E}',w)}{z-\lambda} = \frac {c(\mathcal{E},w)}{z-\lambda} \cdot \frac {w-\lambda+\delta}{w-\lambda} \\
&e(z)m(w) \quad \text{is given by the correspondence} \quad \frac {c(\mathcal{E},w)}{z-\lambda}
\end{align*}
Thus, we have the following equality of correspondences
\begin{multline}
\label{eqn:mult correspondences}
m(w) e(z) - e(z)m(w) \frac {w-z+\delta}{w-z} = \\
= \frac {c(\mathcal{E},w)}{z-\lambda}  \left(\frac {w-\lambda+\delta}{w-\lambda} - \frac {w-z+\delta}{w-z} \right) =  \frac {-\delta \cdot c(\mathcal{E},w)}{(w-\lambda)(w-z)} = \Delta_* \left( \frac {- c(\mathcal{G},w)}{w-z} \right)
\end{multline}
(recall that $\Delta \cong \quot_{d,d+1} \hookrightarrow \quot_{d,d+1} \times C$ is the graph of the map $p_C$). Since $\CG$ is a locally free sheaf of rank $r-1$, the right-hand side of \eqref{eqn:mult correspondences} does not have any terms in the expansion as $w \gg z$ with negative powers of $z$. We therefore conclude \eqref{eqn:m and e}. Relation \eqref{eqn:rel fm_quot} is proved just like the preceding discussion, so we leave it as an exercise to the reader. 

Finally, let us prove \eqref{eqn:rel ef_quot}, specifically the equality
$$
e(z)f(w) - f(w)e(z) = \delta \cdot \frac {h(z)-h(w)}{z-w}
$$
of operators $H^*(\quot_d) \rightarrow H^*(\quot_d \times C \times C)[[z^{-1},w^{-1}]]$. As explained in Subsection \ref{sub:corr}, the compositions $e(z)f(w)$ and $f(w)e(z)$ are given by correspondences supported on the moduli spaces 
$$
\{E_1 \stackrel{x}\hookrightarrow E \stackrel{y}\hookleftarrow E_2\} \qquad \text{and} \qquad \{E_1 \stackrel{y}\hookleftarrow E' \stackrel{x}\hookrightarrow E_2\}
$$
respectively (where all coherent sheaves above are viewed as subsheaves of $V$). The assignments 
\begin{align*}
&E \mapsto E_1 \cap E_2  \\
&E' \mapsto E_1 + E_2 
\end{align*}
(as subsheaves of $V$) give mutually inverse maps between the aforementioned correspondences away from the locus $(E_1,x) \neq (E_2,y)$. The excision property of cohomology therefore implies that the commutator $e(z)f(w) - f(w)e(z)$ is supported on the diagonal $\{(E_1,x) = (E_2,y)\}$ of $(\quot_d \times C)^{\times 2}$. We conclude that
\begin{equation}
\label{eqn:commutator general}
e(z)f(w) - f(w)e(z) = \text{multiplication by }\Psi(z,w)
\end{equation}
for some $\Psi(z,w) \in H^*(\quot_d \times C) [[z^{-1},w^{-1}]]$. To compute the class $\Psi(z,w)$, it suffices to apply relation \eqref{eqn:commutator general} to the fundamental class
$$
e(z)f(w) \cdot 1 - f(w) e(z) \cdot 1 = \Psi(z,w)
$$
Formulas \eqref{eqn:segre 1} and \eqref{eqn:segre 2} applied to the setting of \eqref{eqn:quot proj 1} and \eqref{eqn:quot proj 2} (respectively) give
\begin{align}
&f(w) \cdot 1 = \frac 1{c(\mathcal{E},w)} \label{eqn:f on 1}  \\
&e(z) \cdot 1 = \left[- \frac {c(V,z+K_C)}{c(\mathcal{E},z+K_C)} \right]_{z^{<0}} = 1- \frac {c(V,z+K_C)}{c(\mathcal{E},z+K_C)}
\label{eqn:e on 1}
\end{align}
(the minus sign is due to $\mathcal{L}$ being isomorphic to $\CO(-1)$ on the projectivization \eqref{eqn:quot proj 2}). Let us apply $f(w)$ to the class \eqref{eqn:e on 1}. 
With the notation in Subsection \ref{sub:nested}, we have
\begin{eqnarray*}
\label{eqn:temp}
f(w) e(z) \cdot 1 &=& f(w) \cdot \left[1 - \frac {c(V,z+K_C)}{c(\mathcal{E}',z+K_C)} \right] \\ &=&  (p_- \times p_C)_* \left [\frac 1{w-\lambda} \cdot \left ( 1- \frac {c(V,z+K_C)}{c(\mathcal{E}',z+K_C)} \right ) \right ].
\end{eqnarray*}
 The short exact sequence \eqref{eqn:ses nested} yields
$$
c(\mathcal E',z) = c(\mathcal E,z) \cdot \frac {z-\lambda+\delta} {z-\lambda}
$$
where $\delta$ denotes the class of the diagonal in $C \times C$. We therefore write 
$$
f(w) e(z) \cdot 1 = (p_- \times p_C)_* \left [ \frac{1}{w-\lambda} \left [1- \frac {c(V,z+K_C)}{c(\mathcal{E},z+K_C)} \cdot \frac {z-\lambda +K_C}{z-\lambda +K_C+\delta}\right] \right] $$
$$ =  (p_- \times p_C)_* \left [ \frac{1}{w-\lambda} \cdot \left ( 1- \frac {c(V,z+K_C)}{c(\mathcal{E},z+K_C)} \right ) + \delta \cdot \frac {c(V,z+K_C)}{c(\mathcal{E},z+K_C)} \cdot \frac {1}{(w-\lambda )(z-\lambda +K_C+\delta)}\right].
$$
Since $\delta \cdot (\delta + K_C) = 0 \in H^* (C \times C),$ the above expression simplifies to 
$$ f(w) e(z) \cdot 1=  (p_- \times p_C)_* \left [ \frac{1}{w-\lambda} \cdot \left ( 1- \frac {c(V,z+K_C)}{c(\mathcal{E},z+K_C)} \right ) + \delta \cdot \frac {c(V,z+K_C)}{c(\mathcal{E},z+K_C)} \cdot \frac {1}{(w-\lambda )(z-\lambda)}\right]$$
$$=  (p_- \times p_C)_* \left [ \frac{1}{w-\lambda} \cdot \left ( 1- \frac {c(V,z+K_C)}{c(\mathcal{E},z+K_C)} \right ) + \frac{\delta}{w-z}  \cdot \frac {c(V,z+K_C)}{c(\mathcal{E},z+K_C)}  \cdot  \left(  \frac{1}{z-\lambda} - \frac{1}{w-\lambda} \right )   \right] $$
Now the pushforward formula \eqref{eqn:segre 1} applied to the setting of \eqref{eqn:quot proj 1} allows us to evaluate
\begin{multline}
\label{eqn:fe}
 f(w) e(z) \cdot 1 = \frac{1}{c(\mathcal E, w)} \cdot \left [ 1- \frac {c(V,z+K_C)}{c(\mathcal{E},z+K_C)} \right ]+ \\ + \frac{\delta}{w-z}  \cdot \frac {c(V,z+K_C)}{c(\mathcal{E},z+K_C)}   \cdot \left [  \frac{1}{c (\mathcal E, z)} - \frac{1}{c(\mathcal E, w)} \right ].
 \end{multline}
We now turn to the composition $e(z) f (w)$ and calculate
\begin{eqnarray*}
e(z) f(w) \cdot 1 &=& (p_{+} \times p_C)_* \left [ \frac{1}{z-\lambda} \cdot \frac{1}{c (\mathcal E_{-}, w)} \right ] \\
&= &(p_{+} \times p_C)_* \left [ \frac{1}{z-\lambda} \cdot \frac{1}{c (\mathcal E, w)} \frac{ w- \lambda + \delta}{w- \lambda} \right ]\\
& = & (p_{+} \times p_C)_* \left [ \frac{1}{z-\lambda} \cdot \frac{1}{c (\mathcal E, w)} +\frac{\delta}{(z-\lambda) (w-\lambda)} \cdot \frac{1}{c (\mathcal E, w)} \right ]\\
& = & (p_{+} \times p_C)_* \left [ \frac{1}{z-\lambda} \cdot \frac{1}{c (\mathcal E, w)} + \frac{\delta}{w-z} \cdot \frac{1}{c (\mathcal E, w)} \cdot \left ( \frac{1}{z-\lambda} - \frac{1}{w-\lambda} \right ) \right].
\end{eqnarray*}
On the first line of the above calculation, $\mathcal E_{-} $ denotes the universal subsheaf on $\quot_{d-1} \times C$ which in the following line we express, on the nested Quot scheme $\quot_{d-1, d},$ in terms of the universal subsheaf $\mathcal E$ on $\quot_d \times C.$ Using the pushforward formula \eqref{eqn:segre 2} in the setting of \eqref{eqn:quot proj 2}, we obtain
\begin{multline}
\label{eqn:ef}
e(z) f(w) \cdot 1 = \left [ 1 - \frac{c(V, z+ K_C)}{c(\mathcal E, z + K_C)} \right ] \cdot \frac{1}{c (\mathcal E, w)} + \\ + \frac{\delta}{w-z} \cdot \frac{1}{c (\mathcal E, w)} \cdot  \left [  \frac{c(V, w+ K_C)}{c(\mathcal E, w + K_C)} - \frac{c(V, z+ K_C)}{c(\mathcal E, z + K_C)} \right ].
\end{multline}
From \eqref{eqn:fe} and \eqref{eqn:ef} we now see that 
$$e(z) f(w) \cdot 1 - f(w) e(z) \cdot 1 = \delta \cdot \frac{h (z) - h (w)}{z-w}, \, \, \text{with} \, \, h (z) = \frac{c(V, z+ K_C)}{c(\mathcal E, z) c(\mathcal E, z + K_C)},$$ as wished. 
    
\end{proof}

\subsection{The Yangian action} Before we compare the geometric commutation relations of Theorem \ref{theorem:yangian} with the abstract construction of the shifted Yangian in Definition \ref{def:yangian}, we need to give a precise notion of the action of $Y_{\hbar}^r(\mathfrak{sl}_2)$ on $\mathsf H_{\quot} = \bigoplus_{d=0}^{\infty} H^*(\quot_d)$. Recall that we write
$$
{\mathsf H}_{\quot \times C^n} = \bigoplus_{d=0}^{\infty} H^*(\quot_d \times C^n)
$$
for any $n \in \BN$.

\begin{definition}
\label{def:action}

An action of $Y_{\hbar}^r(\mathfrak{sl}_2)$ on $\mathsf H_{\quot}$ is a $\BQ$-linear map
$$
Y_{\hbar}^r(\mathfrak{sl}_2) \xrightarrow{\Phi} \text{Hom} (\mathsf H_{\quot}, \mathsf H_{\quot \times C})
$$
satisfying the following properties for all $x,y \in Y_{\hbar}^r(\mathfrak{sl}_2)$:

\begin{enumerate}
	
\item $\Phi(1) = \pi^*$, where $\pi : \quot \times C \rightarrow \quot$ is the natural projection map \\

\item $\Phi(\hbar x)$ coincides with the composition:
\begin{equation}
\label{eqn:const}
\mathsf H_{\quot}  \xrightarrow{\Phi(x)} \mathsf H_{\quot \times C} \xrightarrow{\text{Id}_{\quot} \times \emph{multiplication by } K_C}  \mathsf H_{\quot \times C} 
\end{equation}

\item $\Phi(xy)$ coincides with the composition:
\begin{equation}
\label{eqn:hom}
\mathsf H_{\quot} \xrightarrow{\Phi(y)} \mathsf H_{\quot \times C} \xrightarrow{\Phi(x) \times \text{Id}_C} \mathsf H_{\quot \times C \times C} \xrightarrow{\text{Id}_{\quot} \times \Delta^*} \mathsf H_{\quot \times C}
\end{equation}

\item $\Delta_* \Phi \left( \frac {[x,y]}{-\hbar} \right)$ coincides with the difference between the compositions:
\begin{align*}
&\mathsf H_{\quot}\xrightarrow{\Phi(y)} \mathsf H_{\quot \times C} \xrightarrow{\Phi(x) \times \text{Id}_C} \mathsf H_{\quot \times C \times C} \\
&\mathsf H_{\quot} \xrightarrow{\Phi(x)} \mathsf H_{\quot \times C} \xrightarrow{\Phi(y) \times \text{Id}_C} \mathsf H_{\quot \times C \times C} \xrightarrow{\text{Id}_{\quot} \times \text{swap}^*} \mathsf H_{\quot \times C \times C}
\end{align*}
where $\emph{swap} : C \times C \rightarrow C \times C$ is the permutation map. 

\end{enumerate}

Note that for item (4) to be well-defined, we need $[x,y]$ to be a multiple of $\hbar$ for any $x,y \in Y^r_{\hbar}(\mathfrak{sl}_2)$, which is easily seen to be the case from relations \eqref{eqn:rel mm}--\eqref{eqn:rel ef}. 

\end{definition}

It is clear that Theorem \ref{theorem:yangian} yields an action 
$$
Y_{\hbar}^r(\mathfrak{sl}_2) \xrightarrow{\Phi} \text{Hom} (\mathsf H_{\quot}, \mathsf H_{\quot \times C})
$$
in the sense of Definition \ref{def:action}, thus establishing Corollary \ref{cor:yangian}.

\begin{remark} (Laumon spaces) Let us recall the smooth quasi-projective variety
\begin{equation}
\label{eqn:laumon}
\text{Laumon}_d = \Big \{ \CE \stackrel{\iota}\hookrightarrow \CO_{\BP^1}^{\oplus r}, \text{rank } \CE = r, \deg \CE = -d, \iota \text{ isomorphism near } \infty \in \BP^1\Big\}
\end{equation}
The action $\BC^* \curvearrowright \BP^1$ that fixes 0 and $\infty$ acts on $\text{Laumon}_d$, so we may consider the equivariant cohomology groups
$$
H_{\text{Laumon}} = \bigoplus_{d=0}^{\infty} H^*_{\BC^*}(\text{Laumon}_d)
$$
We identify the abstract symbol $-\hbar$ with the equivariant parameter of the $\BC^*$ action. In \cite{bffr}, it was shown that there exists an action 
\begin{equation}
\label{eqn:yangian on laumon}
Y_{\hbar}^r(\mathfrak{sl}_2) \curvearrowright H_{\text{Laumon}}
\end{equation}
with the generators $e_k,f_k,m_i$ acting by certain correspondences, akin to our \eqref{eqn:operator e}, \eqref{eqn:operator f}, \eqref{eqn:operator m}. $\text{Laumon}_d$ can be construed as an analogue of our $\quot_d$ when the projective curve $C$ is replaced by equivariant $\BA^1$. This case is simpler than that of projective $C$ due to the triviality of the cohomology of $\BA^1$. Thus, for projective $C$ we do not get an honest action of the shifted Yangian on the cohomology of Quot schemes, but instead we obtain the geometric notion in Definition \ref{def:action} (which is inspired by the case of surfaces treated in \cite[Definition 5.2]{negut3}).
\end{remark}

\begin{remark}
Because of formulas \eqref{eqn:chern g}, it is clear that we have
\begin{equation}
\label{eqn:formula a in terms of e}
a(z) = e(z) m(z) \Big|_\Delta = m(z) e(z-K_C) \Big|_\Delta
\end{equation}
where $e(z)m(z)$ and $m(z)e(z-K_C)$ are operators $H^*(\quot_d) \rightarrow H^*(\quot_{d+1} \times C \times C)$, and $|_\Delta$ denotes restriction to the diagonal of $C \times C$. Note that with this terminology, formula \eqref{eqn:mult correspondences} reads
\begin{equation}
\label{eqn:m e g}
m(w) e(z) - e(z)m(w) \frac {w-z+\delta}{w-z} = - \delta \cdot \frac {a(w)}{w-z} 
\end{equation}
which is a more ``precise" version of identity \eqref{eqn:rel me_quot}. 
\end{remark}

\section{Multiplication formulas and the proof of Theorem \ref{theorem:fock basis}} 
\label{sec:theorem 1} 

\subsection{Another nested Quot scheme} In this section, we will express the operators of multiplication $m(z)$ in terms of the creation and annihilation operators, thus obtaining an analogue of Lehn's formula for Hilbert schemes of points on surfaces \cite{lehn}. Consider the scheme
\begin{equation}
\label{eqn:def z}
\mathfrak{Z}_d = \Big\{ E_1 \stackrel{x}\hookrightarrow E \stackrel{x}\hookleftarrow E_2 \ \Big| \ E \text{ is a colength }d -1 \text{ subsheaf of }V, \ x \in C \text{ arbitrary}\Big\} 
\end{equation}
It is easy to see 
that
$$
\mathfrak{Z}_d = \mathbb{P}(\mathcal{E}) \times_{\text{Quot}_{d-1} \,\times \, C}\mathbb{P}(\mathcal{E}).
$$
We note therefore that $\mathfrak{Z}_d$ carries two tautological quotient line bundles $\mathcal{L}_1$ and $\mathcal{L}_2$ on the two projective factors. 
We have a commutative diagram of maps 
\begin{equation}
\label{eqn:comm diag}
\xymatrixcolsep{5pc}
\xymatrix{ \text{Quot}_{d-1,d} \ar[d]_{p_+ \times p_C} \ar[r]^-{\iota} & \mathfrak{Z}_d \ar[d]^{\pi} \\ \text{Quot}_{d} \times C  \ar[r]^-{\Delta_{\text{Quot}} \times \text{Id}_C} & \text{Quot}_{d} \times \text{Quot}_{d} \times C}
\end{equation}
where $\iota$ denotes the closed embedding corresponding to $E_1 = E_2 \subset V,$ and $\pi$ is the map which remembers $(E_1,E_2,x)$ in the notation of \eqref{eqn:def z}. We record the following

\begin{lemma}
\label{lem:diag}

The map $\iota$ is a regular embedding, cut out by the composition of the maps
\begin{equation}
    \label{eqn:sec diag}
\mathcal{G}_1 \hookrightarrow \mathcal{E} \twoheadrightarrow \mathcal{L}_2
\end{equation}
on $\mathfrak{Z}_d$, where $\mathcal{G}_i$ is defined as the kernel of the tautological map $\mathcal{E}_\Delta \twoheadrightarrow \mathcal{L}_i$, $\forall i \in \{1,2\}$, and $\Delta \cong \mathfrak{Z}_d \hookrightarrow \mathfrak{Z}_d \times C$ denotes the graph of the map which remembers the point $x$.

\end{lemma}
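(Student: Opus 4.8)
The plan is to analyze the map $\iota$ fiberwise over $\quot_{d-1} \times C$ and identify it with the diagonal inside a fibered product of projective bundles. Recall that $\mathfrak{Z}_d = \mathbb{P}(\mathcal{E}) \times_{\quot_{d-1} \times C} \mathbb{P}(\mathcal{E})$, so over a point $(\{E \subset V\}, x)$ the fiber is $\mathbb{P}(E_x) \times \mathbb{P}(E_x)$, where we use $\mathbb{P}(-)$ for the projective bundle of one-dimensional quotients. The subscheme $\quot_{d-1,d}$ embeds via $E_1 = E_2$, which is exactly the diagonal $\mathbb{P}(E_x) \hookrightarrow \mathbb{P}(E_x) \times \mathbb{P}(E_x)$. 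The first step is therefore to recall the standard fact that the diagonal of $\mathbb{P}(W) \times \mathbb{P}(W)$ (for a vector space $W$) is the zero locus of the canonical section of $\mathrm{pr}_1^* \mathcal{G} \otimes \mathrm{pr}_2^* \mathcal{O}(1)$, where $\mathcal{G}$ is the rank $(\dim W - 1)$ kernel bundle $0 \to \mathcal{G} \to \mathcal{O}(-1)^{??} $ — more precisely, $\mathcal{G} = \ker(W_{\mathbb{P}(W)} \twoheadrightarrow \mathcal{O}(1))$ on the first factor, and the section is the composite $\mathrm{pr}_1^*\mathcal{G} \hookrightarrow W \twoheadrightarrow \mathrm{pr}_2^*\mathcal{O}(1)$.

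Concretely, I would carry this out as follows. First, set up notation: on $\mathfrak{Z}_d$, restrict to the diagonal divisor $\Delta \hookrightarrow \mathfrak{Z}_d \times C$ (the graph of $x$), so that $\mathcal{E}_\Delta$ makes sense, and define $\mathcal{G}_i = \ker(\mathcal{E}_\Delta \twoheadrightarrow \mathcal{L}_i)$ for $i = 1,2$; this is locally free of rank $r-1$. Second, exhibit the tautological section $s \in \Gamma(\mathfrak{Z}_d, \mathcal{G}_1^\vee \otimes \mathcal{L}_2)$ coming from the composite \eqref{eqn:sec diag}, namely $\mathcal{G}_1 \hookrightarrow \mathcal{E}_\Delta \twoheadrightarrow \mathcal{L}_2$. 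Third, observe that the (scheme-theoretic, or derived) zero locus of $s$ is precisely the locus where $\mathcal{G}_1 \subset \mathcal{E}_\Delta$ already lies in the kernel of $\mathcal{E}_\Delta \twoheadrightarrow \mathcal{L}_2$; since both $\mathcal{G}_1$ and $\ker(\mathcal{E}_\Delta \to \mathcal{L}_2) = \mathcal{G}_2$ have rank $r-1$ inside the rank-$r$ bundle $\mathcal{E}_\Delta$, this forces $\mathcal{G}_1 = \mathcal{G}_2$, equivalently $\mathcal{L}_1 = \mathcal{L}_2$ as quotients of $\mathcal{E}_\Delta$, equivalently $E_1 = E_2$. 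So the zero locus is exactly $\quot_{d-1,d}$ as a closed subscheme of $\mathfrak{Z}_d$. Fourth, a dimension count: $\mathfrak{Z}_d$ has dimension $\dim(\quot_{d-1} \times C) + 2(r-1) = r(d-1) + 1 + 2(r-1)$, while $\quot_{d-1,d}$ has dimension $rd$, so the codimension is $r - 1 = \mathrm{rank}(\mathcal{G}_1^\vee \otimes \mathcal{L}_2)$; hence the zero locus has the expected dimension and $s$ is a regular section, so $\iota$ is a regular embedding cut out by \eqref{eqn:sec diag}, as claimed.

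The main obstacle I anticipate is the careful bookkeeping around where things live — distinguishing $\mathcal{E}$ on $\mathfrak{Z}_d \times C$ from its restriction $\mathcal{E}_\Delta$ to the diagonal, and making sure the two projectivizations $\mathbb{P}(\mathcal{E})$ are genuinely fibered over the \emph{same} base $\quot_{d-1} \times C$ (so that the "diagonal" makes sense). Once the setup is pinned down, the regularity is really just the classical statement that the diagonal of $\mathbb{P}(W) \times_X \mathbb{P}(W)$ is the zero locus of a section of a vector bundle of rank equal to its codimension, applied in families; the transversality/expected-dimension check is the content that makes "derived zero locus $=$ actual zero locus," and this is exactly guaranteed by the dimension count above together with smoothness of $\quot_{d-1,d}$ (established in Subsection \ref{sub:nested}). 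A secondary point worth spelling out is why the scheme-theoretic zero locus is reduced and equals $\quot_{d-1,d}$ on the nose rather than with multiplicity — this follows because the section is transverse, which in turn follows from the expected-dimension count since a regular section of a bundle of the right rank cuts out a local complete intersection of the predicted codimension, and smoothness of the result then rules out embedded or thickened components.
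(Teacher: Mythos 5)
Your argument is essentially the same as the paper's, which simply cites ``the well-known resolution of the diagonal in a projective bundle over an arbitrary base scheme.'' You re-derive that fact rather than cite it, by first identifying the set-theoretic zero locus of the tautological section $s \in \Gamma(\mathfrak{Z}_d, \mathcal{G}_1^\vee \otimes \mathcal{L}_2)$ with $\quot_{d-1,d}$, and then using the dimension count $\dim \mathfrak{Z}_d - \dim \quot_{d-1,d} = r-1 = \operatorname{rank}(\mathcal{G}_1^\vee \otimes \mathcal{L}_2)$ to show the section is regular. That part is correct and establishes that $\iota$ is a regular embedding.

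One step in your write-up is not right as stated, though. You claim that transversality ``follows from the expected-dimension count since a regular section of a bundle of the right rank cuts out a local complete intersection of the predicted codimension, and smoothness of the result then rules out embedded or thickened components.'' Having zero locus of the expected codimension tells you the section is \emph{regular}, hence the zero scheme $Z(s)$ is Cohen--Macaulay (an l.c.i.); it does not tell you $s$ is \emph{transverse} (e.g.\ $s = x^2$ on $\mathbb{A}^1$ is regular but not transverse), and ``smoothness of the result'' is smoothness of $\quot_{d-1,d}$, not a priori of $Z(s)$, so invoking it to rule out thickenings is circular. To close the gap cleanly: $Z(s)$ is CM and hence has no embedded components, so it suffices to show it is generically reduced, and that is the classical local computation on $\mathbb{P}^{r-1} \times \mathbb{P}^{r-1}$ (the resolution of the diagonal, which is scheme-theoretic). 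That fiberwise classical identification is exactly the single fact the paper's one-line proof quotes; once you invoke it you actually do not need the dimension count at all, since the scheme-theoretic statement holds fiberwise and hence in families.
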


\begin{proof} 
As the map $\iota$ is the diagonal embedding, the fact that it is cut out by the section \eqref{eqn:sec diag} is simply the well-known resolution of the diagonal in a projective bundle over an arbitrary base scheme.

\end{proof}

\subsection{Multiplication operators} We are now ready to deduce Proposition \ref{prop:mult intro}, expressing the multiplication by universal Chern classes in terms of the $a$ and $f$ operators. 
\begin{proof}
Let us consider the class
\begin{equation}
\label{eqn:expression}
\frac 1{z-\lambda} \in H^*(\text{Quot}_{d-1,d}) [[z^{-1}]]
\end{equation}
Because of Lemma \ref{lemma:projectivizations}, the push-forward of \eqref{eqn:expression} under $p_+ \times p_C$ is the power series
$$
\left[- \frac {c(V,z+K_C)}{c(\mathcal{E},z+K_C)} \right]_{z^{<0}} = 1 - \frac {c(V,z+K_C)}{c(\mathcal{E},z+K_C)} \in H^*(\text{Quot}_{d} \times C) [[z^{-1}]]
$$
(the minus sign is due to $\mathcal{L}$ being isomorphic to $\CO(-1)$ on the projectivization \eqref{eqn:quot proj 2}). Pushing forward the resulting class under the bottom map $\Delta_{\text{Quot}} \times \text{Id}_C$ in \eqref{eqn:comm diag} yields the correspondence that realizes 
\begin{equation}
\label{eqn:expression 1}
\text{the operator on } H^*(\quot_d \times C) \text{ of multiplication by } 1 - \frac {c(V,z+K_C)}{c(\mathcal{E},z+K_C)}  
\end{equation}
On the other hand, Lemma \ref{lem:diag} implies that the push-forward of the class \eqref{eqn:expression} under the regular embedding $\iota$ is
$$
\frac {c(\mathcal{G}_1,\lambda_2)}{z-\lambda_2} \in H^*(\mathfrak{Z}_d)[[z^{-1}]]
$$
where $\lambda_2 = c_1(\mathcal{L}_2)$. Because $c(\mathcal{G}_1,z)$ is a polynomial of degree $r-1$ in $z$, then the class above is simply
$$
\left[ \frac {c(\mathcal{G}_1,z)}{z-\lambda_2} \right]_{z^{<0}} \in H^*(\mathfrak{Z}_d)[[z^{-1}]]
$$
The push-forward of the class above under the map $\pi$ in diagram \eqref{eqn:comm diag} yields the correspondence that realizes
\begin{equation}
\label{eqn:expression 2}
\text{the operator } \left[ a(z)f(z) \Big|_\Delta  \right]_{z^{<0}} \text{ on } H^*(\quot_d \times C)
\end{equation}
By comparing \eqref{eqn:expression 1} to \eqref{eqn:expression 2}, we see that, on $H^*(\quot_d \times C),$
\begin{equation}
\label{eqn:equality series}
\text{the operator of multiplication by }\frac {c(V,z+K_C)}{c(\mathcal{E},z+K_C)} \, = \, \text{Id } - \left[ a(z)f(z) \Big|_\Delta  \right]_{z^{<0}}.
\end{equation}
By taking the coefficient of $z^{-k}$ in the series \eqref{eqn:equality series}, we immediately obtain
\begin{equation}
\label{eqn:equality coefficients}
\Big[ \text{operator of multiplication by } c_k((V-\mathcal{E}) \otimes \mathcal{K}_C^{-1}) \Big] = \sum_{i=0}^{r-1}  a_i f_{r+k-2-i} \Big|_\Delta (-1)^{i-k-1}
\end{equation}
for all $k \geq 1$, which is the statement of Proposition \ref{prop:mult intro}.
\end{proof}

It is elementary to deduce formulas for the operators of multiplication by $c_k(\CE)$ from either \eqref{eqn:equality series} or \eqref{eqn:equality coefficients}, although these formulas are rather cumbersome.

\subsection{Commutators}

Proposition \ref{prop:mult intro} shows the importance of studying the operators $a_i$ and $f_j$ in tandem. We now show Proposition \ref{prop:commute intro}, which for any $i,j \in \{0,\dots,r-1\}$, establishes the commutators of these operators as follows:
\begin{equation}
\label{eqn:comm a and f}
[f_j,a_i] = \delta \cdot \begin{cases} \chi_{i+j,r-1} (-1)^i  + \sum_{s=0}^{i-1} a_s f_{i+j-s-1} (-1)^{i-s} &\text{if } i+j \leq r-1 \\ - \sum_{s=i}^{r-1} a_s f_{i+j-s-1} (-1)^{i-s} &\text{if } i+j \geq r \end{cases}
\end{equation}
where $\chi$ denotes the Kronecker delta function. The LHS is an operator $H^*(\quot_d) \rightarrow H^*(\quot_d \times C \times C)$, with $a_i$ and $f_j$ acting in the first and second factor of $C \times C$, respectively. The RHS is a class supported on the diagonal of $C \times C$, due to the presence of $\delta$. 

\begin{proof}

In the subsequent formulas, any product of $k$ series among $e(z), f(z), m(z)$ takes values in $H^*(C^k)$; with this in mind, $\Delta_{ij}$ denotes the obvious codimension 1 diagonals of $C^k$, while $\delta$ denotes the class of the small (dimension 1) diagonal of $C^k$. We have the following formulas
\begin{align*}
f(w) a(z) &\stackrel{\eqref{eqn:formula a in terms of e}}= f(w) e(z) m(z) \Big|_{\Delta_{23}} \stackrel{\eqref{eqn:rel ef_quot}}= \left[ e(z) f(w) m(z)  \Big|_{\Delta_{13}} - \delta \cdot \frac {h(z) - h(w) }{z-w} m(z) \right]_{z^{\geq 0}} \\
&\stackrel{\eqref{eqn:rel fm_quot}}=  \left[ e(z) m(z) f(w) \Big|_{\Delta_{12}} \cdot \frac {z-w+\delta}{z-w}   - \delta \cdot \frac {h(z) - h(w)}{z-w}m(z) \right]_{z \gg w | w^{< 0}, z^{\geq 0}} = \\
&= a(z) f(w) + \delta \left[ \frac {a(z)f(w)}{z-w} -  \frac {h(z) - h(w) }{z-w} m(z) \right]_{z \gg w | w^{< 0}, z^{\geq 0}} 
\end{align*}
Extracting the coefficient of $z^{r-1-i} (-1)^i w^{-j-1}$ from the expression above yields the following identity for all $i \in \{0,\dots,r-1\}$ and all $j \geq 0$
\begin{equation}
\label{eqn:comm a and f general}
[f_j,a_i] = \delta \sum_{s=0}^{i-1} a_s f_{i+j-s-1} (-1)^{i-s} + \delta \sum_{s=0}^i h_{i+j-r-s+1} m_s (-1)^{i-s}
\end{equation}
where we write
$$
h(z) = \sum_{t=0}^{\infty} \frac {h_t}{z^{r+t}} \qquad \Rightarrow \qquad \frac {h(z)-h(w)}{z-w} = - \sum_{t,t' = 0}^{\infty} \frac {h_{t+t'-r+1}}{z^{t+1}w^{t'+1}}
$$
with the implication that $h_t = 0$ if $t<0$. Since the second sum in the RHS of \eqref{eqn:comm a and f general} is vacuous (respectively equal to $(-1)^i$) when $i+j < r-1$ (respectively $i+j=r-1$), we conclude the case $i+j \leq r-1$ of \eqref{eqn:comm a and f}. On the other hand, if $j < r \leq i+j$, then
$$
\sum_{s=0}^i h_{i+j-r-s+1} m_s (-1)^{i-s} = \sum_{s=0}^{i+j-r+1} h_{i+j-r-s+1} m_s (-1)^{i-s} = (-1)^i \Big[h(z)m(z)\Big]_{z^{-i-j+r-1}} = 
$$
$$
= (-1)^i \left[\text{multiplication by }\frac {c(V,z+K_C)}{c(\CE,z+K_C)}\right]_{z^{-i-j+r-1}} \stackrel{\eqref{eqn:equality coefficients}}= \sum_{s=0}^{r-1} a_s f_{i+j-s-1} (-1)^{i-s-1} 
$$
Combining the formula above with \eqref{eqn:comm a and f general} implies \eqref{eqn:comm a and f} in the case $j < r \leq i+j$.

\end{proof}

\subsection{The proof of Theorem \ref{theorem:fock basis}: a basis for cohomology}

An important observation concerning formula \eqref{eqn:comm a and f} is that all of the indices of $f$ in the right-hand side are smaller than $r$. Thus, formula \eqref{eqn:comm a and f} can be used to successively move any $f_j$ with $j < r$ to the right of any product of $a_i$'s. This is the key observation that will allow the following argument.
 
\begin{proposition}
\label{prop:linearly independent}

The products
$$
\Big\{ a_{i_1} \dots a_{i_d}(\gamma) |0\rangle \Big\}^{r > i_1 \geq \dots \geq i_d \geq 0}_{\gamma \in \text{a fixed }\BQ \text{-basis of } H^*(C^d)_{\Sigma}}
$$
are linearly independent elements of $H^*(\quot_d)$, where $\Sigma \subset S(d)$ is the subgroup of permutations generated by the transpositions $(ij)$ for all $i \neq j$ such that $k_i = k_j$.
    
\end{proposition}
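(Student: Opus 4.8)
The plan is to build, dual to the proposed spanning set, a family of linear functionals on $H^*(\quot_d)$ whose matrix against that set is block--triangular with invertible diagonal blocks; linear independence follows at once. Since $\quot_d$ is smooth and projective its intersection pairing is perfect, and the functionals are manufactured from the annihilation operators. The basic observation is that every $f_j$ kills the vacuum, as it maps $H^*(\quot_0)$ into $H^*(\quot_{-1}\times C)=0$; hence a composition $f_{j_1}\circ\cdots\circ f_{j_d}$, arranged to act in a fresh copy of $C$ at each stage, is a map $H^*(\quot_d)\to H^*(\quot_0\times C^d)=H^*(C^d)$. Post-composing with $\int_{C^d}\rho^*(\gamma')\cdot(-)$, i.e.\ with the intersection pairing of $C^d$, gives functionals $\ell_{\underline j,\gamma'}\colon H^*(\quot_d)\to\BQ$; equivalently these are the Poincar\'e pairings on $\quot_d$ against the vectors $e_{j_1}\cdots e_{j_d}(\gamma')|0\rangle$ built from the transpose correspondences. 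The non-degeneracy that will enter is that of the pairing of $C^d$ (equivalently, by K\"unneth, of the curve $C$), which is exactly the input with no analogue for Chow groups.

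To evaluate $\ell_{\underline j,\gamma'}\big(a_{i_1}\cdots a_{i_d}(\gamma)|0\rangle\big)$ one \emph{straightens}: apply the commutator relation \eqref{eqn:comm a and f} of Proposition \ref{prop:commute intro} repeatedly so as to move every $f$ to the right of every $a$. A fully straightened monomial ends in $f_\bullet\cdots|0\rangle=0$, so only commutator terms contribute. Two features of \eqref{eqn:comm a and f} drive the count. First, every $f$--index occurring on its right-hand side is again $<r$, so the straightening never leaves the admissible range. Second --- the crucial bookkeeping device --- assign to a monomial in the $a$'s and $f$'s the weight $\Phi=\sum(\text{indices of its }a\text{'s})+\sum(\text{indices of its }f\text{'s})$; then each ``scattering'' summand of \eqref{eqn:comm a and f} (the $a_s f_{i+j-s-1}$ terms) lowers $\Phi$ by exactly $1$ while pushing an $f$ one step to the right, while the ``contraction'' summand $\delta\cdot\chi_{i+j,\,r-1}(-1)^i$ (nonzero only when $i+j=r-1$) deletes an $a$ together with an $f$ and lowers $\Phi$ by $r-1$. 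In a surviving term all $d$ of the $f$'s must have been contracted, so the number of scatterings used is forced to equal $\sum_k i_k+\sum_k j_k-d(r-1)$, which is non-negative only if that quantity is.

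Now take $\underline j$ to be the dual multi-index $(r-1-m_1,\dots,r-1-m_d)$ of a fixed admissible $\underline m$ and test against $a_{\underline i}(\gamma)|0\rangle$: the count above becomes $\sum_k i_k-\sum_k m_k$, so $\ell_{\underline j,\gamma'}(a_{\underline i}(\gamma)|0\rangle)$ vanishes unless $\sum_k i_k\ge\sum_k m_k$; and when $\sum_k i_k=\sum_k m_k$ no scattering is allowed, so the only surviving terms come from matching each $f_{j_k}$ directly to some $a_{i_l}$ with $i_l+j_k=r-1$, which forces the multisets $\{i_k\}$ and $\{m_k\}$ to coincide, i.e.\ $\underline i=\underline m$. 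Ordering the admissible multi-indices by $|\underline i|=\sum_k i_k$, the matrix $\big[\ell_{\underline j,\gamma'}(a_{\underline i}(\gamma)|0\rangle)\big]$ (rows indexed by $(\underline i,\gamma)$, columns by $(\underline j,\gamma')$ with $\underline j$ dual to $\underline m$) is therefore block--triangular, its diagonal blocks being --- after the diagonal identifications imposed by the $\delta$'s, which is where the multiplicities arising when $\underline m$ has repeated parts get reorganized into the $\Sigma$--symmetrization --- the Gram matrices of $\langle-,-\rangle_{C^d}$ on the coinvariants $H^*(C^d)_\Sigma$. Since the symmetry group of $\underline m$ equals that of its dual, acts by isometries, and we are in characteristic $0$, this pairing remains non-degenerate on $H^*(C^d)_\Sigma$; hence the diagonal blocks are invertible, the whole matrix is invertible, and the $\ell_{\underline j,\gamma'}$ separate the proposed basis vectors.

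The main obstacle is the careful execution of the straightening: verifying the exact weight changes attributed to \eqref{eqn:comm a and f}, tracking the positions of the $f$'s so that the process provably terminates with the surviving terms being precisely the ``complete contractions'', keeping the signs straight, and --- for multi-indices with repeated parts --- confirming that the diagonal identifications really do convert the combinatorics into the $\Sigma$--coinvariant statement. The remaining ingredients (that $f$ annihilates the vacuum, that straightening stays within indices $<r$, and the non-degeneracy of the intersection pairing of $C^d$) are immediate from what has already been established.
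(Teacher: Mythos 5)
Your proof is correct and takes essentially the same approach as the paper's: test $a_{i_1}\cdots a_{i_d}(\gamma)|0\rangle$ against compositions of $f$-operators (which annihilate the vacuum), straighten using Proposition~\ref{prop:commute intro} so that only full contractions $\chi_{i+j,r-1}$ survive, and invoke the non-degeneracy of the intersection pairing of $C^d$ for the final step. Your explicit weight $\Phi$ and the observation that scatterings drop it by $1$ while contractions drop it by $r-1$ is a cleaner bookkeeping device than the paper's mixed sum/lexicographic ordering (which as written appears to have a sign/direction slip between the setup and the application), and it makes the triangularity argument and the reduction to the Gram matrix on $H^*(C^d)_\Sigma$ more transparent; but the mathematical content — in particular the two facts you single out as crucial, that the $f$-indices appearing in \eqref{eqn:comm a and f} stay $<r$ and that only the $\delta$-matchings $i+j=r-1$ contribute after straightening — is exactly what the paper uses.
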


\begin{proof} Suppose we have a non-trivial relation
\begin{equation}
\label{eqn:non-trivial relation}
a_{i_1} \dots a_{i_d}(\gamma) |0\rangle \in \sum_{r > i_1' \geq \dots \geq i_d' \geq 0} \sum_{\gamma' \neq \gamma} \mathbb{Q} \cdot a_{i'_1} \dots a_{i'_d}(\gamma') |0\rangle
\end{equation}
where we assume that the right-hand side only features indices such that either $i_1'+\dots+i_d' < i_1+\dots+i_d$ or $i_1'+\dots+i_d' = i_1+\dots+i_d$ but the partition $(i_1' \geq \dots \geq i_d')$ is lexicographically larger than $(i_1\geq \dots \geq i_d)$. From \eqref{eqn:comm a and f}, we see that as we commute $f_j$ past $a_i$, the answer is equal to $(-1)^i \cdot \delta_{i+j,r-1}$ plus a linear combination of $a_{i'}f_{j'}$ with $i'+j' < i+j$. Thus, if we apply the composed operator (for some $\phi \in H^*(C^d)$)
\begin{equation}
\label{eqn:the product}
f_{r-1-i_d} \dots f_{r-1-i_1} (\phi) : H^*(\quot_d) \rightarrow H^*(\quot_0) = \BQ \cdot |0 \rangle
\end{equation}
to both sides of \eqref{eqn:non-trivial relation}, the only terms that survive are the various $(-1)^i \cdot \delta_{i+j,r-1}$'s (this happens because $f_j|0 \rangle = 0$, so as we move the various $f_j$'s to the right of the product of $a_i$'s, the end result can be non-zero only if all the $f_j$'s are absorbed by the $a_i$'s through the terms $(-1)^i \cdot \delta_{i+j,r-1}$). However, because we assumed that all partitions that appear with non-zero coefficient in the RHS of \eqref{eqn:non-trivial relation} are strictly smaller than $(i_1\geq \dots \geq i_d)$ lexicograhically, the corresponding classes $a_{i'_1} \dots a_{i'_d}(\gamma') |0\rangle$ are all annihilated by \eqref{eqn:the product}. On the other hand, \eqref{eqn:the product} applied to the LHS of \eqref{eqn:non-trivial relation} is equal to multiplication by $\pm \int \phi \cdot \gamma$, which is non-zero for suitably chosen $\phi$ due to the non-degeneracy of the intersection pairing. We have thus obtained a contradiction to \eqref{eqn:non-trivial relation}.

\end{proof}

\begin{proposition}
\label{prop:span}

The products
$$
\Big\{ a_{i_1} \dots a_{i_d}(\gamma) |0\rangle \Big\}^{r > i_1 \geq \dots \geq i_d \geq 0}_{\gamma \in \text{a fixed }\BQ \text{-basis of } H^*(C^d)_{\Sigma}}
$$
linearly span $H^*(\quot_d)$, where the notation is that of Proposition \ref{prop:linearly independent}.
    
\end{proposition}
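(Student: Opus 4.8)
The plan is to induct on $d$, the base case $d=0$ being immediate since $\quot_0$ is a point. For $d'\ge 0$ let $W_{d'}\subseteq H^*(\quot_{d'})$ denote the span of all classes $a_{i_1}\cdots a_{i_{d'}}(\gamma)|0\rangle$ with $i_\bullet\in\{0,\dots,r-1\}$ and $\gamma\in H^*(C^{d'})$ arbitrary; by Proposition \ref{prop:commute a intro} this coincides with the span over the ordered indices $r>i_1\ge\dots\ge i_{d'}\ge 0$ and $\gamma$ in a basis of $H^*(C^{d'})_\Sigma$ occurring in Theorem \ref{theorem:fock basis}, so it suffices to prove $W_d=H^*(\quot_d)$. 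Assume $W_{d'}=H^*(\quot_{d'})$ for all $d'<d$. By Remark \ref{remark:diagonal}, $H^*(\quot_d)$ is spanned by the universal classes $\pi_*\big(c_{k_1}(\mathcal{E}_1)\cdots c_{k_n}(\mathcal{E}_n)\,\rho^*\gamma\big)$; splitting $\gamma$ by the K\"unneth formula into a pure tensor $\gamma_1\otimes\dots\otimes\gamma_n$ and discarding factors with $k_j=0$ (which integrate out a copy of $C$) or $k_j>r$ (which vanish), such a class equals $T_{k_1,\gamma_1}\cdots T_{k_n,\gamma_n}(1_{\quot_d})$, where $T_{k,\gamma_0}\colon H^*(\quot_d)\to H^*(\quot_d)$ is the operator "multiply by $c_k(\mathcal{E})$ on a fresh copy of $C$, cap with $\gamma_0$ there, and push that copy forward". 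It therefore suffices to prove (a) $1_{\quot_d}\in W_d$ and (b) $T_{k,\gamma_0}(W_d)\subseteq W_d$ for $1\le k\le r$ and $\gamma_0\in H^*(C)$.

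Step (a) is a short geometric computation. The forgetful map $p_+\colon\quot_{d-1,d}\to\quot_d$ is generically finite of degree $d$ between smooth projective varieties of the same dimension, so $(p_+)_*1_{\quot_{d-1,d}}=d\cdot 1_{\quot_d}$; since $c_0(\mathcal{G})=1$ we have $a_0=e_0=(p_+\times p_C)_*(p_-^*)$ from \eqref{eqn:operator e}, and applying $a_0$ to $1_{\quot_{d-1}}$ and then integrating out the support point (capping with $1\in H^0(C)$) returns exactly $(p_+)_*1_{\quot_{d-1,d}}=d\cdot 1_{\quot_d}$. Hence $1_{\quot_d}$ is a scalar multiple of $a_0$ applied to $1_{\quot_{d-1}}$, and since $1_{\quot_{d-1}}\in W_{d-1}$ by the inductive hypothesis while $a_0$ turns any length-$(d-1)$ $a$-monomial into a length-$d$ one, we get $1_{\quot_d}\in W_d$.

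The heart of the argument is step (b), and the idea is that multiplication by $c_k(\mathcal{E})$ is built out of the operators $a_i,f_j$ in such a way that it "passes through" $H^*(\quot_{d-1})$, where the inductive hypothesis applies. Starting from \eqref{eqn:equality series} --- which expresses multiplication by $\tfrac{c(V,z+K_C)}{c(\mathcal{E},z+K_C)}$ as $\text{Id}-[a(z)f(z)|_\Delta]_{z^{<0}}$, i.e.\ as $\text{Id}$ minus a series with no constant term in $z^{-1}$ --- one inverts to write multiplication by $c(\mathcal{E},z)$ as $c(V,z)\cdot(\text{Id}-X)^{-1}=c(V,z)\sum_{p\ge 0}X^p$ (after a harmless substitution removing the $K_C$-shift), with each $X^p=O(z^{-p})$. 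Because $c(\mathcal{E},z)$ is a monic polynomial of degree $r$, only finitely many of the $X^p$ contribute to each $c_k(\mathcal{E})$ with $k\le r$, and one obtains: multiplication by $c_k(\mathcal{E})$ is a finite $\BQ$-linear combination of multiplications by classes pulled back from $C$ (the coefficients of $c(V,z)$) and of composites $a_i\circ f_j\circ(\text{diagonal restriction})$ --- at most $k$ of the latter, all with $i<r$. One then applies such an operator to an element $\xi\in W_d$: in each composite, the $f_j$-step lands in $H^*(\quot_{d-1}\times C^\bullet)=W_{d-1}\otimes H^*(C^\bullet)$ by the inductive hypothesis and K\"unneth, the ensuing $a_i$-step ($i<r$) promotes a length-$(d-1)$ $a$-monomial to a length-$d$ one, and the diagonal restrictions and the multiplications by pulled-back classes preserve the span of length-$d$ $a$-monomials; the $\text{Id}$-terms do too, since $\xi$ is already in $W_d$. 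Capping with $\gamma_0$ and integrating out the fresh copy of $C$ finally sends $W_d\otimes H^*(C)$ back into $W_d$. This gives $T_{k,\gamma_0}(W_d)\subseteq W_d$, completing the induction; combined with Proposition \ref{prop:linearly independent}, this proves Theorem \ref{theorem:fock basis}.

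The step I expect to need the most care is the passage, inside (b), from Proposition \ref{prop:mult intro} --- which a priori gives only a formula for multiplication by the Segre-type classes $c_k\big((V-\mathcal{E})\otimes\mathcal{K}_C^{-1}\big)$ --- to a formula for multiplication by $c_k(\mathcal{E})$ itself, since this requires inverting the operator-valued series $\text{Id}-X$ and verifying that the resulting geometric series, though infinite in general, truncates after finitely many terms for each $c_k(\mathcal{E})$ with $k\le r$. The remaining work is bookkeeping: tracking which copy of $C$ each operator acts on and which diagonals are imposed. As an alternative organization of (b), one could instead normal-order, as in the proof of Proposition \ref{prop:linearly independent}, by commuting every $f_j$ to the right of every $a_i$ using the relations of Proposition \ref{prop:commute intro} --- in the form \eqref{eqn:comm a and f} when $j<r$ and its general form \eqref{eqn:comm a and f general} otherwise --- together with $f_j|0\rangle=0$; the cost there is having to control the multiplication operators $m_s$ that \eqref{eqn:comm a and f general} reintroduces whenever an $f$-index reaches or exceeds $r$, which again is absorbed by the same induction on $d$.
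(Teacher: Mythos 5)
Your proof is correct, and it takes a genuinely different route from the paper's at the crucial final step. Both arguments share the same two ingredients: writing $1_{\quot_d}=\tfrac{1}{d!}\,a_0\cdots a_0(1_{C^d})|0\rangle$, and the Lehn-type formula \eqref{eqn:equality series}/\eqref{eqn:equality coefficients} expressing multiplication by universal classes in terms of $a_if_j|_\Delta$ with $i<r$. Where the two proofs diverge is in how they close. The paper applies a direct rewriting: each $m_k$ in $m_{k_1}\cdots m_{k_t}(\gamma)\,a_0\cdots a_0(1_{C^d})|0\rangle$ is converted into sums of $a_if_j|_\Delta$, and then the commutator formula \eqref{eqn:comm a and f general} is used to push every $f$ to the right, where it annihilates $|0\rangle$; along the way new $m_s$ terms appear, get reconverted via \eqref{eqn:equality coefficients}, and one invokes a termination claim (``the $f$'s and $m$'s can only move closer to $|0\rangle$''). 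You instead organize the argument as an induction on $d$: any $f_j$ drops into $H^*(\quot_{d-1}\times C^\bullet)$, which by the inductive hypothesis and K\"unneth equals $W_{d-1}\otimes H^*(C^\bullet)$, and a subsequent $a_i$ with $i<r$ raises this back into $W_d$. This bypasses the normal-ordering termination argument entirely and does not even use the $[f_j,a_i]$ commutation relations of Proposition \ref{prop:commute intro} --- only the commutativity of the $a$'s themselves from Proposition \ref{prop:commute a intro}. You are also more explicit about the series inversion that turns the formula for multiplication by $c_k((V-\mathcal{E})\otimes\mathcal{K}_C^{-1})$ into one for $c_k(\mathcal{E})$, verifying that the geometric series truncates in degree $\leq r$; the paper only remarks that ``it is elementary to deduce formulas for the operators of multiplication by $c_k(\mathcal{E})$.'' The trade-off: your version makes the termination trivial at the cost of carrying the inductive hypothesis through K\"unneth decompositions, while the paper's version is uniform in $d$ but leans on a termination claim that requires a moment's thought to justify. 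Your closing remark correctly identifies the paper's normal-ordering organization as the alternative.
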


\begin{proof} As pointed out in Remark \ref{remark:diagonal}, $H^*(\quot_d)$ is linearly spanned by universal classes. Any such universal class can be regarded as the result of acting with the operator ``multiplication by a universal class" on the fundamental class
$$
1_{\quot_d} = \frac 1{d!} \cdot \underbrace{a_0 \dots a_0}_{d \text{ times}}(1_{C^d}) |0 \rangle \in H^0(\quot_d)
$$
(here we iterate the equality $a_0(1_C)\cdot 1_{\quot_{d-1}} = d \cdot 1_{\quot_d}$, which follows from the fact that the map $\quot_{d-1,d} \rightarrow \quot_d$ is generically a $d$-to-$1$ map). Therefore, we conclude that $H^*(\quot_d)$ is linearly spanned by classes of the form
$$
m_{k_1} \dots m_{k_t} (\gamma) a_0 \dots a_0 (1_{C^d})|0 \rangle
$$
for various $r \geq k_1 \geq \dots \geq k_t > 0$ and $\gamma \in H^*(C^t)$. However, formula \eqref{eqn:equality coefficients} implies that we can write any class as above as a linear combination of classes of the form
\begin{equation}
\label{eqn:any class}
a_{i_1}f_{j_1} \dots a_{i_t}f_{j_t}(\gamma) a_0 \dots a_0 (1_{C^d})|0 \rangle
\end{equation}
for various $i_1,j_1,\dots,i_t,j_t$ and $\gamma \in H^*(C^{2t})$. We may now use \eqref{eqn:comm a and f general} to move all the $f$'s to the right of all the $a$'s (recall that any expression where $f_j$ is immediately to the left of $|0\rangle$ vanishes, for all $j \geq 0$). When doing so, one may encounter sums of products of other $m_k$'s in the middle of the product of $a$'s and $f$'s in \eqref{eqn:any class}. Then we reconvert the corresponding $m_k$'s into sums of $a_if_j$'s via \eqref{eqn:equality coefficients}, and repeat the argument; it is easy to see that it terminates in finitely many steps, as the $f$'s and $m$'s can only move closer to $|0\rangle$ throughout the process. Thus, any class \eqref{eqn:any class} can be successively written as a linear combination of the classes $a_{i_1} \dots a_{i_d}(\gamma) |0\rangle$, just as we needed to prove.

\end{proof}
Together, Propositions \ref{prop:linearly independent} and \ref{prop:span} imply Theorem \ref{theorem:fock basis}. Note that our proof of Proposition \ref{prop:span} also holds at the level of Chow groups, but our proof of Proposition \ref{prop:linearly independent} does not.

\section{Appendix: the quadruple moduli space}

\subsection{The moduli space of quadruples}
\label{sub:quadruples}

In the preceding sections, we proved commutation relations such as \eqref{eqn:commute a intro} and \eqref{eqn:rel ee_quot} using the forgetful map $\quot_{d,d+1,d+2} \rightarrow \quot_{d,d+2}$ (which leaves out the middle sheaf in \eqref{eqn:def two-step nested quot scheme}). In the current Section, we will show an alternative method to obtain the aforementioned formulas, using a blow-up instead. 

\begin{definition}
\label{def:quadruples}

Consider the moduli space $\fY_{d,d+1,d+2}$ parameterizing quadruples
\begin{equation}
\label{eqn:quadruple}
 \xymatrix{& E' \ar@{^{(}->}[rd]^{y} & \\
E'' \ar@{^{(}->}[ru]^{x} \ar@{^{(}->}[rd]_{y}
& & E  \subset  V \\
& \widetilde{E}' \ar@{^{(}->}[ru]_{x} &} 
	\end{equation}
with the right-most inclusion having colength $d$.

\end{definition}

We have projection maps
$$
\pi^\uparrow, \pi^\downarrow : \fY_{d,d+1,d+2} \rightarrow \quot_{d,d+1,d+2}
$$
that forget $\widetilde{E}'$ and $E'$, respectively. It is clear that the maps above are projective, and therefore so is $\fY_{d,d+1,d+2}$ (in fact, the natural analogue of \cite[Proposition 2.37]{negut4} shows that the map $\pi^\uparrow$ is l.c.i.; as $\quot_{d,d+1,d+2}$ is smooth, this implies that $\fY_{d,d+1,d+2}$ is l.c.i.). In the following subsections, we will prove the following stronger result.

\begin{proposition}
\label{prop:blow up}

The map $\pi^\uparrow$ is the blow-up of the subvariety 
\begin{equation}
\label{eqn:divisor}
Z = \Big\{x = y \text{ and } E/E'' \cong \mathbb{C}_x \oplus \mathbb{C}_x \Big\} 
\end{equation}
of $\quot_{d,d+1,d+2}$ (with closed points as in \eqref{eqn:def two-step nested quot scheme}). Moreover, $Z$ is a smooth codimension 2 subvariety of $\quot_{d,d+1,d+2}$, hence $\fY_{d,d+1,d+2}$ is smooth of dimension $r(d+2)$.

\end{proposition}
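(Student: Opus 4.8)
The plan is to analyze the map $\pi^\uparrow : \fY_{d,d+1,d+2} \to \quot_{d,d+1,d+2}$ locally over the locus where the two colength-one steps in \eqref{eqn:def two-step nested quot scheme} are supported at distinct points versus the locus $Z$ where they coincide and the length-two quotient $E/E''$ is a direct sum $\BC_x \oplus \BC_x$. Away from $Z$, I would first check that $\pi^\uparrow$ is an isomorphism: if $x \neq y$, or if $x = y$ but $E/E'' \cong \mathcal{O}_C/\mathfrak{m}_x^2$ (the non-split length-two sheaf at $x$), then there is a unique sub-line $\widetilde{E}' \subset E$ with $E/\widetilde{E}' \cong \BC_x$ and $E'' \subset \widetilde{E}'$ once $E'$ is prescribed — indeed in the non-split case the unique length-one quotient of $E/E''$ forces $\widetilde{E}' = E'$, while in the distinct-point case $\widetilde{E}'$ is determined by the requirement that it agree with $E$ near $y$ and with $E''$ near $x$. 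So the content of the proposition is entirely concentrated over $Z$.

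**Step two** is to identify $Z$ as a smooth codimension-$2$ subvariety. The cleanest route: over $Z$, the quotient $T = E/E''$ splits as $\BC_x \oplus \BC_x$, so the datum is $E'' \subset E \subset V$ with $T$ supported at a single point and semisimple, i.e. $T \cong \BC_x^{\oplus 2}$. Equivalently, $Z$ fibers over $\quot_{d,d+2}$ (via the forgetful map $\tau$ composed with $\pi^\downarrow$, say — more precisely, over the locus in $\quot_{d,d+2}$ where the length-two torsion sheaf $\mathcal{T}$ is semisimple at one point) together with the choice of the middle sheaf $E'$, which is a point in $\BP(T) = \BP(\BC_x^{\oplus 2}) = \BP^1$. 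I would show $Z$ is the projectivization of a rank-two bundle over a smooth base: inside $\quot_{d,d+1}$, the locus where $E/E' \cong \BC_x$ and there exists a further $E'' \stackrel{x}\hookrightarrow E'$ with $E/E''$ semisimple is cut out by the vanishing of the composite map $\mathcal{G}|_\Delta \otimes \mathcal{K}_C^{-1} \hookrightarrow \CE|_\Delta \twoheadrightarrow \CL$ in the notation of \eqref{eqn:tor nested} (the "semisimplicity" condition is exactly that the extension class splits, which via the Tor computation of Subsection \ref{sub:g} is the vanishing of a section of a line bundle). Tracking this through, $Z$ should be a projective subbundle of $\quot_{d,d+1,d+2}$, visibly smooth, of codimension $2$. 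Granting this, $\fY_{d,d+1,d+2}$ being the blow-up of a smooth center in a smooth variety is automatically smooth of the same dimension $r(d+2)$.

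**Step three** is the blow-up identification proper. I would invoke the universal property: to give a map $S \to \fY_{d,d+1,d+2}$ over $\quot_{d,d+1,d+2}$ is to give, in addition to the pulled-back flag $\mathcal{E}'' \subset \mathcal{E}' \subset \mathcal{E} \subset V$ on $S \times C$, a second sub $\widetilde{\mathcal{E}}' $ with $\mathcal{E}''\subset \widetilde{\mathcal{E}}' \subset \mathcal{E}$, colength conditions as in \eqref{eqn:quadruple}. The "choice of $\widetilde{E}'$" is a choice of a line in the length-two sheaf $\mathcal{E}/\mathcal{E}''$ lying over the given line $\mathcal{E}'/\mathcal{E}''$; on the complement of $Z$ this line is forced, and over $Z$ the fiber of possible $\widetilde{E}'$ is all of $\BP^1 = \BP(\mathcal{E}/\mathcal{E}'')$. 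One shows this moduli problem is represented by $\mathrm{Bl}_Z \quot_{d,d+1,d+2}$ by producing the universal $\widetilde{\mathcal{E}}'$ on the blow-up (using the ideal sheaf $\mathcal{I}_Z$, which becomes invertible there, to resolve the indeterminacy of the rational section defining $\widetilde{E}'$ away from $Z$) and checking it has the right restriction to the exceptional divisor. The cleanest implementation is probably to follow the analogue of \cite[Proposition 2.37]{negut4} cited in the excerpt, realizing $\pi^\uparrow$ first as an l.c.i. morphism and then matching it fiberwise and scheme-theoretically with the blow-up via Lemma \ref{lem:diag}-style resolution-of-the-diagonal input.

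**The main obstacle** I anticipate is the scheme-theoretic analysis over $Z$: showing not merely that the fibers of $\pi^\uparrow$ over $Z$ are $\BP^1$'s set-theoretically, but that $\pi^\uparrow$ restricted over $Z$ is genuinely the projectivized normal bundle, with the correct (reduced, non-thickened) scheme structure. This requires a careful local (deformation-theoretic) computation near a point of $Z$: one must write down the versal deformation of a configuration $E'' \stackrel{x}\hookrightarrow \widetilde{E}' \stackrel{x}\hookrightarrow E$ with $E/E''$ semisimple, compare it with that of the triple (after forgetting $\widetilde{E}'$), and see the extra $\mathbb{P}^1$-direction blow down exactly onto the codimension-$2$ locus $Z$. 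The Tor-type computation of Subsection \ref{sub:g}, suitably doubled, is the technical heart here; everything else is bookkeeping with projective bundles and universal property of blow-ups.
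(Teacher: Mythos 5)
Your overall strategy matches the paper's: show $\pi^\uparrow$ is an isomorphism away from $Z$ (including over $\{x=y,\ E/E'' \cong \mathcal O/\mathfrak m_x^2\}$ --- you correctly identify this), prove $Z$ is a smooth codimension-$2$ subvariety by exhibiting it as a projective (sub-)bundle, and then invoke the universal property of blow-ups. The paper's Lemma \ref{lem:divisor} realizes $Z = \mathbb{P}_{\quot_{d,d+1}}(\mathcal{G}) \hookrightarrow \widetilde{Z} = \mathbb{P}_{\quot_{d,d+1}}(\mathcal{E}'_\Delta)$, which is in the same spirit as your ``$Z$ should be a projective subbundle'' remark, though cleaner than fibering $Z$ over a locus in $\quot_{d,d+2}$.

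The place where your proposal stops short is exactly the place you flag as ``the main obstacle'': establishing the scheme-theoretic identification near $Z$. You gesture at two possible routes (construct a universal $\widetilde{\mathcal{E}}'$ on the blow-up yielding a map $\mathrm{Bl}_Z \to \fY_{d,d+1,d+2}$, or invoke an l.c.i.\ argument), but neither is carried out, and each requires exactly the control you say you lack. The paper's resolution is to go in the opposite direction and reduce everything to a concrete line-bundle computation: Lemma \ref{lemma:divisors on y} writes down explicit morphisms $\widetilde{\CL}_2 \to \CL_1(\Delta)$ and $\CL_2 \to \widetilde{\CL}_1(\Delta)$ on $\fY_{d,d+1,d+2}$ (multiplication by a local equation $f$ of the diagonal) whose vanishing locus is precisely $\text{Exc} = (\pi^{\uparrow})^{-1}(Z)$. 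This shows $\text{Exc}$ is a Cartier divisor, which is what the universal property of the blow-up actually needs; one then gets a morphism $\nu : \fY_{d,d+1,d+2} \to \mathrm{Bl}_Z\quot_{d,d+1,d+2}$ and checks it is a bijection on closed points (both sides have $\mathbb{P}^1$-fibers over $Z$), hence, being proper, birational, and mapping onto a smooth --- in particular normal --- target, an isomorphism. Your direction of constructing a universal quadruple on $\mathrm{Bl}_Z$ could in principle work too, but you would then have to verify the family satisfies the flatness/colength conditions defining the moduli functor along the exceptional divisor, which is harder than checking that a single pulled-back ideal sheaf is invertible. In short: same skeleton, but the load-bearing step (the Cartier-divisor computation via explicit line bundle maps) is missing from your writeup, and your hand-off to ``the analogue of [Ne4, Prop.\ 2.37]'' does not by itself supply it.
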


\subsection{} We begin by investigating the subvariety $Z$ of \eqref{eqn:divisor}, and proving its smoothness.

\begin{lemma}
\label{lem:divisor}

The subvariety $\tZ = \{x = y\}$ is a smooth divisor in $\quot_{d,d+1,d+2}$. One dimension down, the subvariety $Z$ of \eqref{eqn:divisor} is a smooth divisor in $\tZ$.

\end{lemma}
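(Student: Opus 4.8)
The statement has two parts: first that $\tZ = \{x=y\}$ is a smooth divisor in $\quot_{d,d+1,d+2}$, and second that $Z = \{x=y \text{ and } E/E'' \cong \BC_x \oplus \BC_x\}$ is a smooth divisor inside $\tZ$. The plan is to use the double projective bundle structure of $\quot_{d,d+1,d+2}$ recorded in \eqref{eqn:two step quot as proj}, namely $\quot_{d,d+1,d+2} = \mathbb{P}_{\quot_{d,d+1} \times C}(\mathcal{E}')$, together with the description $\quot_{d,d+1} = \mathbb{P}_{\quot_d \times C}(\mathcal E)$ from \eqref{eqn:quot proj 1}. Under these identifications, the first factor of $C$ carries the support point $x$ of $E/E'$ (pulled back from $\quot_{d,d+1}$) and the second factor carries the support point $y$ of $E'/E''$. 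The divisor $\tZ = \{x=y\}$ is then exactly the preimage of the diagonal $\Delta \subset C \times C$ under the map $p_1 \times p_2 : \quot_{d,d+1,d+2} \to C \times C$; since $\Delta$ is a smooth divisor in $C \times C$ and $p_1 \times p_2$ is a smooth morphism (being a composition of projective bundle projections followed by the smooth projection $\quot_d \times C \times C \to C \times C$), its preimage $\tZ$ is a smooth divisor. This handles the first claim.

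\textbf{The divisor $Z$ inside $\tZ$.} For the second claim, I would work on $\tZ$, where $x = y$, so the two diagonals $\Delta_1$ and $\Delta_2$ of $\quot_{d,d+1,d+2}\times C$ agree after restriction. Over $\tZ$ we have the two line bundles $\mathcal L_1, \mathcal L_2$ (the quotients $E_x/E'_x$ and $E'_x/E''_x$) and, from the $\operatorname{Tor}$-type exact sequences \eqref{eqn:tor nested} applied in each step, a map $\mathcal{L}_2 \otimes \mathcal K_C \xrightarrow{\alpha} \CE'_\Delta$ and the surjection $\CE_\Delta \xrightarrow{\beta} \mathcal L_1$. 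Composing with $\CE'_\Delta \hookrightarrow \CE_\Delta$ yields a canonical morphism of line bundles
\begin{equation*}
\mathcal L_2 \otimes \mathcal K_C \longrightarrow \mathcal L_1
\end{equation*}
on $\tZ$, i.e. a section $s$ of $\mathcal L_1 \otimes \mathcal L_2^{-1} \otimes \mathcal K_C^{-1}$. The key geometric observation is that $s$ vanishes at a point of $\tZ$ precisely when the length-$2$ torsion sheaf $T = E/E''$ fails to be a non-split extension of $\BC_x$ by $\BC_x$, i.e. precisely when $T \cong \BC_x \oplus \BC_x$; indeed $\alpha$ lands in $\ker\beta = \mathcal G_1$ exactly in the split case. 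Thus $Z = \{s = 0\}$ set-theoretically. I would then show the section $s$ is \emph{regular} (cuts out a smooth divisor), most efficiently by a local coordinate computation: étale-locally on the base, $\quot_{d,d+1,d+2}$ looks like a product of the double-point Quot scheme of $\BA^1$ (the relevant local model being $\{E'' \hookrightarrow E' \hookrightarrow E \subset \CO_{\BA^1}^{\oplus r}\}$ with all three inclusions supported near a single point) with a smooth factor, and there one checks directly that the vanishing locus of $s$ — which amounts to the equation forcing the length-$2$ subscheme $E/E''$ of $\BA^1$ to be $2x$ with a split quotient — is smooth of the expected codimension $1$ in $\tZ$.

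\textbf{Main obstacle.} The routine parts (that $\tZ$ is smooth, that $Z$ is cut out by the section $s$ set-theoretically) are straightforward given the projective bundle descriptions already established. The real work is verifying that $s$ vanishes with multiplicity one along $Z$, equivalently that $Z$ is a reduced smooth divisor rather than a thickened or singular locus; this is where one must get one's hands dirty with the local model of nested Quot schemes at a double point. An alternative, perhaps cleaner, route is to identify $\tZ$ itself with a projective bundle over a simpler base — over $\tZ$, specifying the flag $E'' \subset E' \subset E$ with $x=y$ fixed amounts to a complete flag of length-$1$ quotients of the fixed length-$2$ sheaf $T = E/E''$, so $\tZ$ fibers over $\quot_{d,d+2}\times C$ (via $\tau\times p_1$, restricted) with fiber the flag variety of a length-$2$ torsion sheaf; the locus $Z$ is then the preimage of the stratum where $T$ is split (equivalently where $T \cong \CO_{2x}$ fails), which one can analyze using the known geometry of $\operatorname{Quot}$ of length $2$ on a smooth curve. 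I would present the local-coordinate argument as the primary proof and mention the flag-bundle viewpoint as motivation.
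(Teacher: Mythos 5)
Your handling of $\tZ$ is sound, though the route differs from the paper's: you argue that $\tZ$ is the preimage of the diagonal under the smooth morphism $p_1 \times p_2$, whereas the paper restricts the projective bundle structure $\quot_{d,d+1,d+2} = \mathbb{P}_{\quot_{d,d+1}\times C}(\CE')$ along the section $\Delta \cong \quot_{d,d+1} \hookrightarrow \quot_{d,d+1}\times C$ given by $p_C$, obtaining $\tZ = \mathbb{P}_{\quot_{d,d+1}}(\CE'_\Delta)$. Both are correct; the paper's identification is the one that makes the second half of the lemma fall out for free, as explained below.

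For $Z$ there is a concrete error in the section you construct. Write $\alpha_1 : \CL_1 \otimes \CK_C \to \CE'_{\Delta_1}$, $\beta_1: \CE_{\Delta_1} \to \CL_1$ for the maps of \eqref{eqn:tor nested} applied to the nesting $E' \subset E$, and $\alpha_2 : \CL_2 \otimes \CK_C \to \CE''_{\Delta_2}$, $\beta_2: \CE'_{\Delta_2} \to \CL_2$ for those coming from $E'' \subset E'$. Your proposed composite
\begin{equation*}
\CL_2 \otimes \CK_C \xrightarrow{\alpha_2} \CE''_\Delta \hookrightarrow \CE'_\Delta \hookrightarrow \CE_\Delta \xrightarrow{\beta_1} \CL_1
\end{equation*}
is identically zero: by the exactness of \eqref{eqn:tor nested}, the image of $\alpha_2$ is precisely the kernel of $\CE''_\Delta \to \CE'_\Delta$, so the class dies already at the first inclusion. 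The nondegenerate map is the one in the opposite direction, $\CL_1 \otimes \CK_C \xrightarrow{\alpha_1} \CE'_\Delta \xrightarrow{\beta_2} \CL_2$, and this is exactly what appears in the paper (formula \eqref{eqn:map above}): choosing a local coordinate $t$, it sends a generator $w$ of $E_x/E'_x$ to the class of $tw \in E'_x$ in $E'_x/E''_x$, which vanishes iff $t$ annihilates $E/E''$, i.e.\ iff $E/E'' \cong \BC_x \oplus \BC_x$.

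Even after correcting the direction, the reducedness gap you flag is real, and the local-coordinate computation you defer is precisely where the paper saves effort. On $\tZ = \mathbb{P}_{\quot_{d,d+1}}(\CE'_\Delta)$, the short exact sequence $0 \to \CL \otimes \CK_C \to \CE'_\Delta \to \CG \to 0$ from \eqref{eqn:tor nested} presents $\CG$ as a corank-one quotient bundle; the paper then simply sets $Z := \mathbb{P}_{\quot_{d,d+1}}(\CG) \hookrightarrow \mathbb{P}_{\quot_{d,d+1}}(\CE'_\Delta)$, which is automatically a smooth codimension-one subvariety, with no transversality or reducedness left to prove. All that remains is checking that this projective subbundle coincides with the set-theoretic locus $\{E/E'' \cong \BC_x \oplus \BC_x\}$ of \eqref{eqn:divisor}, which is the elementary local computation above. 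In short, rather than cutting $Z$ out by a section and then worrying whether the vanishing is transverse, the paper recognizes $Z$ directly as a projective subbundle, trading a delicate verification for a structural one.
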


\begin{proof}

Since the subvariety of $\quot_{d,d+1,d+2}$ determined by $\{x=y\}$ is none other than 
$$
\widetilde{Z} = \mathbb{P}_{\quot_{d,d+1}}(\mathcal{E}'_{\Delta})
$$
(where we recall that $\Delta \cong \quot_{d,d+1} \hookrightarrow \quot_{d,d+1} \times C$ is the graph of $p_C$) it is clear that it is a smooth divisor in $\quot_{d,d+1,d+2}$. Moreover, the subvariety
$$
Z = \mathbb{P}_{\quot_{d,d+1}}(\mathcal{G}) \hookrightarrow \widetilde{Z}
$$
(induced by the short exact sequence $0 \rightarrow \mathcal{L} \otimes \mathcal{K}_C \rightarrow \mathcal{E}'_\Delta \rightarrow \mathcal{G} \rightarrow 0$ of \eqref{eqn:tor nested}) is clearly smooth and one dimension lower. By definition, the projectivization $Z$ defined above parameterizes those triples $E'' \stackrel{x}\hookrightarrow E'  \stackrel{x}\hookrightarrow E$ such that the composition
\begin{equation}
\label{eqn:map above}
E_x/E'_x \otimes \underbrace{\mathfrak{m}_x/\mathfrak{m}_x^2}_{K_{C,x}} \xrightarrow{f} E'_x \rightarrow E'_x/E''_x
\end{equation}
vanishes. However, the map $f$ above is defined by taking an arbitrary (local around $x$) section $v$ of $E$, an arbitrary coordinate $t$ on $C$ (also local around $x$) and noting that $tv$ is a section of $E'$. The condition that the composition \eqref{eqn:map above} vanishes implies that $tv$ is actually a section of $E''$, which means that the length 2 quotient sheaf $E/E''$ is annihilated by $t$. This is precisely equivalent to the definition of $Z$ in \eqref{eqn:divisor}. 

\end{proof}

\subsection{} 

Let us now prove the parts of Proposition \ref{prop:blow up} that pertain to $\fY_{d,d+1,d+2}$, specifically the fact that it is the blow-up of the locus $Z \hookrightarrow \quot_{d,d+1,d+2}$. To this end, consider the subvarieties
\begin{align}
&\text{Exc} = \Big\{ x = y \text{ and } E/E'' \cong \BC_x \oplus \BC_x \Big\} \label{eqn:exceptional} \\
&\text{Diag} = \Big\{x = y \text{ and } E' = \tE' \Big\}  \label{eqn:diagonal}
\end{align}
of $\fY_{d,d+1,d+2}$. We have
\begin{equation}
\label{eqn:two divisors}
\Big\{x = y\Big\} = \text{Exc} \cup \text{Diag} \subset \fY_{d,d+1,d+2}
\end{equation}
because if $x = y$ but $E/E'' \not \cong \BC_x \oplus \BC_x$, then $E' = \tE'$ in any quadruple \eqref{eqn:quadruple}. 

\begin{lemma}
\label{lem:weil}

The subvarieties \emph{Exc} and \emph{Diag} are smooth Weil divisors of $\fY_{d,d+1,d+2}$, and they intersect in the codimension 2 locus $Z$ of Lemma \ref{lem:divisor}.

\end{lemma}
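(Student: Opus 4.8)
The statement is local on $\fY_{d,d+1,d+2}$, so the plan is to treat separately the complement of $(\pi^\uparrow)^{-1}(Z)$, where the geometry is soft, and a neighbourhood of it, where one falls back on an explicit local model for the nested Quot scheme as in \cite{negut4}. Away from $Z$ the map $\pi^\uparrow$ is an isomorphism: given a flag $E''\stackrel{y}{\hookrightarrow}E'\stackrel{x}{\hookrightarrow}E$ not lying in $Z$, the opposite middle sheaf $\tE'$ is uniquely determined, because the length-two torsion sheaf $E/E''$ admits a unique length-one subsheaf except when $x=y$ and $E/E''\cong\BC_x\oplus\BC_x$ --- if $x\ne y$ it is $\BC_x\oplus\BC_y$, whose only length-one subsheaves are the two summands, and if $x=y$ but $E/E''$ is not semisimple it is $\CO_C/\mathfrak m_x^2$, whose unique length-one subsheaf forces $\tE'=E'$. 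Hence $\fY_{d,d+1,d+2}\setminus(\pi^\uparrow)^{-1}(Z)$ is smooth, $\mathrm{Exc}=(\pi^\uparrow)^{-1}(Z)$ is empty there, and $\mathrm{Diag}$ is carried isomorphically onto the locus $\tZ\setminus Z$, which is smooth by Lemma~\ref{lem:divisor}.

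It then remains to analyse an \'etale neighbourhood of a point $p\in\mathrm{Exc}$, lying over some $x_0\in C$. Following the local coordinates for (nested) Quot schemes in \cite{negut4}, one encodes the length-two quotient $E/E''$ supported near $x_0$ by a $2\times2$ matrix $N$ (with entries $a,b,c,d$ and eigenvalues near $0$), the middle sheaf $E'$ by an $N$-invariant line $L_1$ spanned by $(1,u_1)$, and $\tE'$ by an $N$-invariant line $L$ spanned by $(1,u_2)$, the remaining (framing) coordinates of the Quot scheme contributing a smooth factor transverse to this computation. Invariance of $L_1$ reads $c=u_1(a-d)+u_1^2 b$, and a short computation then gives $\mathrm{disc}(N)=\big((a-d)+2bu_1\big)^2$. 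It follows that, on $\fY_{d,d+1,d+2}$, the locus $\{x=y\}$ is the vanishing of a single equation (in agreement with Lemma~\ref{lem:divisor}), that $\mathrm{Exc}$ --- where moreover $N$ is scalar --- is the vanishing of one further local equation, namely $b=0$, and that $\mathrm{Diag}=\{L_1=L\}$ is cut out by the single equation $u_1=u_2$. A direct check with these equations shows that $\fY_{d,d+1,d+2}$ is smooth of dimension $r(d+2)$ near $p$ and that $\mathrm{Exc}$ and $\mathrm{Diag}$ are smooth, locally principal divisors; since $\fY_{d,d+1,d+2}$ is then smooth everywhere, Weil and Cartier divisors coincide. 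Finally $\mathrm{Exc}\cap\mathrm{Diag}$ is cut out by two equations, and since $\tE'=E'$ along $\mathrm{Diag}$ the map $\pi^\uparrow$ restricts there to a closed immersion onto $\tZ$, so it restricts on $\mathrm{Exc}\cap\mathrm{Diag}$ to an isomorphism onto $\tZ\cap Z=Z$, as claimed.

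The main obstacle is the local model itself: one must verify that the scheme structure on $\fY_{d,d+1,d+2}$ coming from its modular description agrees with the explicit incidence locus $\{(N,L_1,L):NL_1\subseteq L_1,\ NL\subseteq L\}$ with the stratum ``$L_1=L$ and $N$ has distinct eigenvalues'' deleted, and to track precisely how the two projective directions $L_1$ and $L$ collide along $\mathrm{Exc}$. This sharpens the l.c.i.\ property of $\pi^\uparrow$ furnished by the analogue of \cite[Proposition 2.37]{negut4}; granting it, smoothness of $\fY_{d,d+1,d+2}$ and all the divisor-theoretic assertions above follow from the elementary computations sketched here.
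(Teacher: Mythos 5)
Your approach --- an explicit local model for $\fY_{d,d+1,d+2}$ in the spirit of ADHM-type descriptions of Quot schemes --- is genuinely different from the paper's. The paper proceeds softly: it observes that $\pi^\uparrow:\text{Exc}\to Z$ is a $\BP^1$-fibration and that $\text{Diag}\cong\tZ$, invokes the smoothness of $Z$ and $\tZ$ from Lemma~\ref{lem:divisor}, and then does a dimension count to conclude the codimensions; it never needs (and does not at this stage establish) smoothness of $\fY_{d,d+1,d+2}$ itself.

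There is, however, a genuine gap in your argument, and it sits exactly where you locate the ``main obstacle'': the incidence variety $\{(N,L_1,L): NL_1\subseteq L_1,\ NL\subseteq L\}$ is \emph{not} the local model for $\fY_{d,d+1,d+2}$, and ``deleting a stratum'' does not define a scheme that would fix it. In your coordinates, after imposing $NL_1\subseteq L_1$ (so $c=(a-d)u_1+bu_1^2$), the condition $NL\subseteq L$ is $(u_1-u_2)\big((a-d)+b(u_1+u_2)\big)=0$, a \emph{reducible} union of two smooth hypersurfaces meeting in codimension two, hence singular precisely along $\text{Exc}\cap\text{Diag}$. If this were the local model, $\fY_{d,d+1,d+2}$ would be singular there, $\text{Exc}$ and $\text{Diag}$ would not be Cartier, and your concluding ``direct check'' would fail. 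What is missing is that Definition~\ref{def:quadruples} imposes more than $N$-invariance of $L$: the labelled arrows in \eqref{eqn:quadruple} require that $E'/E''$ and $E/\tE'$ share the same support $x$ (and dually for $y$). In coordinates this is the condition that the eigenvalue of $N$ on $L_1$, namely $a+bu_1$, equals the eigenvalue of $N$ on $W/L$, namely $d-bu_2$; this single equation $(a-d)+b(u_1+u_2)=0$ cuts out a \emph{smooth} hypersurface, which subsumes the invariance of $L$. On this corrected model $\{x=y\}$ becomes $\{b(u_1-u_2)=0\}$, so $\text{Exc}=\{b=0\}$ and $\text{Diag}=\{u_1=u_2\}$ are smooth divisors meeting transversally in codimension two, as claimed. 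But you must derive this support/eigenvalue equation from the functor of points of $\fY_{d,d+1,d+2}$ before the rest of your computation is valid; as written, the proof works in the wrong ambient scheme.
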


\begin{proof} It is clear that the map $\pi^\uparrow$ is an isomorphism outside $\text{Exc}$, and that
$$
\pi^\uparrow : \text{Exc} \rightarrow Z
$$
is a $\BP^1$-fibration. Together with Lemma \ref{lem:divisor}, this implies that $\text{Exc}$ is smooth of dimension $r(d+2)-1$. On the other hand, $\text{Diag}$ is isomorphic to the subvariety $\tZ$ of Lemma \ref{lem:divisor}, and so it is also smooth of dimension $r(d+2)-1$. It is clear that the intersection of $\text{Exc}$ and $\text{Diag}$ is isomorphic to $Z$, which is smooth of dimension $r(d+2)-2$. Putting all the above remarks together, we conclude that $\fY_{d,d+1,d+2}$ has dimension $r(d+2)$, hence $\text{Exc}$ and $\text{Diag}$ are Weil divisors satisfying the conditions stated in the Lemma.
    
\end{proof}

\subsection{Line bundles on $\fY_{d,d+1,d+2}$} 
\label{sub:line}

We will now prove that the Weil divisors \text{Exc} and \text{Diag} are actually Cartier. To this end, let us consider the line bundles
$$
\mathcal{L}_1, \mathcal{L}_2, \widetilde{\mathcal{L}}_1,  \widetilde{\mathcal{L}}_2 \quad \text{on }\fY_{d,d+1,d+2}
$$
whose fibers in the notation of \eqref{eqn:quadruple} are $E'_x/E''_x$, $E_y/E'_y$, $\widetilde{E}'_y/E''_y$, $E_x/\widetilde{E}'_x$, respectively. 

\begin{lemma}
\label{lemma:divisors on y}

On $\fY_{d,d+1,d+2}$, there exist maps of line bundles 
$$
\CL_1 \rightarrow \tCL_2 \qquad \text{and} \qquad \tCL_1 \rightarrow \CL_2
$$
whose associated Weil divisor is $\emph{Diag}$, and maps of line bundles
$$
\tCL_2 \rightarrow \CL_1(\Delta) \qquad \text{and} \qquad \CL_2 \rightarrow \tCL_1(\Delta)
$$
whose associated Weil divisor is $\emph{Exc}$ (above, we write $\CO(\Delta)$ for the pull-back to $\fY_{d,d+1,d+2}$ of the line bundle corresponding to the diagonal in $C \times C$).

\end{lemma}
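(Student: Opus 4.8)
The plan is to exhibit each of the four maps as a morphism of line bundles on $\fY_{d,d+1,d+2}$ coming from an honest morphism of coherent sheaves on $\fY_{d,d+1,d+2}\times C$, and then to read off its zero divisor using \eqref{eqn:two divisors} together with Lemmas \ref{lem:divisor} and \ref{lem:weil}. Write $\Gamma_x,\Gamma_y\subset\fY_{d,d+1,d+2}\times C$ for the graphs of the two maps recording the support points, and recall that the universal length-$2$ torsion sheaf $\mathcal{T}:=\CE/\CE''$ carries the two filtrations induced by $\CE''\subset\CE'\subset\CE$ and $\CE''\subset\widetilde{\CE}'\subset\CE$:
\[
0\to\CL_1\otimes\CO_{\Gamma_x}\to\mathcal{T}\to\CL_2\otimes\CO_{\Gamma_y}\to 0,\qquad 0\to\widetilde{\CL}_1\otimes\CO_{\Gamma_y}\to\mathcal{T}\to\widetilde{\CL}_2\otimes\CO_{\Gamma_x}\to 0,
\]
where $\CL_i,\widetilde{\CL}_i$ are identified with line bundles on $\fY_{d,d+1,d+2}$ via $\Gamma_\bullet\cong\fY_{d,d+1,d+2}$. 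Since the symmetry $\pi^\uparrow\leftrightarrow\pi^\downarrow$ exchanges the two ``$\mathrm{Diag}$'' maps, and likewise the two ``$\mathrm{Exc}$'' maps, it is enough to construct one of each.

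For $\CL_1\to\widetilde{\CL}_2$ I would take the composite $\CL_1\otimes\CO_{\Gamma_x}\hookrightarrow\mathcal{T}\twoheadrightarrow\widetilde{\CL}_2\otimes\CO_{\Gamma_x}$ of the sub- and quotient-morphisms of $\mathcal{T}$ belonging to the two opposite filtrations; as source and target are supported on $\Gamma_x$, this is a morphism of line bundles on $\fY_{d,d+1,d+2}$. It is the zero map at a point exactly when $\CE'/\CE''$ lies in $\ker(\mathcal{T}\to\widetilde{\CL}_2\otimes\CO_{\Gamma_x})=\widetilde{\CE}'/\CE''$, i.e.\ when $E'\subseteq\widetilde{E}'$; as these subsheaves have equal colength this is the locus $E'=\widetilde{E}'$, which is $\mathrm{Diag}$ by \eqref{eqn:diagonal}. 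Since $\fY_{d,d+1,d+2}$ is smooth and $\mathrm{Diag}$ is irreducible, the zero divisor is $m\cdot\mathrm{Diag}$, and $m=1$ is seen at the generic point of $\mathrm{Diag}$, which by Lemma \ref{lem:weil} is isomorphic to that of $\tZ$, where $E/E''\cong\CO_C/\mathfrak{m}_x^2$ and the verification is the elementary local computation already underlying \eqref{eqn:tor nested}.

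For $\widetilde{\CL}_2\to\CL_1(\Delta)$ I would begin from the fact that $\mathcal{T}$ is annihilated by the product of the local equations of $\Gamma_x$ and $\Gamma_y$ in every case ($x\neq y$; $x=y$ with $E/E''$ curvilinear; $x=y$ with $E/E''\cong\BC_x\oplus\BC_x$), so that the inclusion $\CE(-\Gamma_x-\Gamma_y)\hookrightarrow\CE$ factors through an inclusion $\CE(-\Gamma_x-\Gamma_y)\hookrightarrow\CE''$. Restricting this to $\Gamma_x$ and using $\CO(-\Gamma_x)|_{\Gamma_x}\cong\CK_C$ (conormal of a graph) and $\CO(-\Gamma_y)|_{\Gamma_x}\cong\CO(-\Delta)$, a computation away from $\{x=y\}$ — extended by continuity — shows that the resulting morphism $\CE|_{\Gamma_x}\otimes\CK_C\otimes\CO(-\Delta)\to\CE''|_{\Gamma_x}$ factors through the sub-line-bundle $\ker(\CE''|_{\Gamma_x}\to\CE'|_{\Gamma_x})\cong\CL_1\otimes\CK_C$ of \eqref{eqn:tor nested} and annihilates the image of $\widetilde{\CE}'|_{\Gamma_x}\to\CE|_{\Gamma_x}$; hence it descends to a morphism $\widetilde{\CL}_2\to\CL_1(\Delta)$ (and symmetrically $\CL_2\to\widetilde{\CL}_1(\Delta)$), which is an isomorphism over $\{x\neq y\}$. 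On $\{x=y\}$ this morphism vanishes precisely where $E/E''\cong\BC_x\oplus\BC_x$, i.e.\ on $\mathrm{Exc}$ of \eqref{eqn:exceptional}, and as before the zero divisor is $m'\cdot\mathrm{Exc}$ with $m'=1$ checked at the generic point of $\mathrm{Exc}$, which lies over the generic point of $Z$. As an internal check, composing $\CL_1\to\widetilde{\CL}_2$ with $\widetilde{\CL}_2\to\CL_1(\Delta)$ must reproduce the tautological section of $\CO(\Delta)$ cutting out $\{x=y\}=\mathrm{Diag}\cup\mathrm{Exc}$.

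The main obstacle I anticipate is the ``$\mathrm{Exc}$'' half: unlike the ``$\mathrm{Diag}$'' maps, these are not induced directly by the subsheaf inclusions but only after passing through the $\mathrm{Tor}_1$ of skyscrapers (as in \eqref{eqn:tor nested}), so one must carefully reconcile the $\CK_C$-twist that this produces with the $\CO(\Delta)$-twist in the statement, and one must determine the multiplicity $m'$ by an explicit ``punctual length-$2$'' local model near the generic point of $\mathrm{Exc}$. None of this uses Proposition \ref{prop:blow up} — indeed the present lemma is one of its ingredients — though everything is of course compatible with $\pi^\uparrow$ being the stated blow-up.
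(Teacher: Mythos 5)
Your proposal is correct and takes essentially the same approach as the paper: the ``Diag'' maps are exactly the compositions through $\mathcal{T}=\CE/\CE''$ of the sub- and quotient-maps from the two filtrations (which is precisely the paper's maps $E'_x/E''_x \to E_x/\tE'_x$, etc.), and the ``Exc'' maps are the paper's construction via local sections of $\CO(\Delta)$, recast globally through the universal inclusion $\CE(-\Gamma_x-\Gamma_y)\hookrightarrow\CE''$. Two small remarks: you write ``away from $\{x\neq y\}$'' where you clearly mean the computation is done on the open locus $\{x\neq y\}$ and extended over $\{x=y\}$; and you are right to flag the multiplicity question, which the paper passes over silently --- the paper only verifies the set-theoretic zero locus and implicitly relies on reducedness, whereas your sketch of checking $m=m'=1$ at the curvilinear generic points of Diag and Exc is the honest way to complete the argument.
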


\begin{proof} The natural inclusions in the square \eqref{eqn:quadruple} yield maps of line bundles
\begin{align*}
&\CL_1 \rightarrow \tCL_2 \quad \text{induced by } E'_x/E''_x \rightarrow E_x/\tE'_x \\
&\tCL_1 \rightarrow \CL_2 \quad \text{induced by } \tE'_y/E''_y \rightarrow E_y/E'_y
\end{align*}
These maps of line bundles vanish precisely when $x=y$ and $E' = \tE'$, i.e. on the locus \text{Diag}. Meanwhile, choosing local sections $f \in \Gamma(C \times C, \CO(\Delta))$ allows us to construct maps of line bundles
\begin{align*}
&\tCL_2 \rightarrow \CL_1(\Delta) \quad \text{induced by } E_x/\tE'_x \xrightarrow{\text{multiplication by }f(x,-)} E'_x/E''_x\\
&\CL_2 \rightarrow \tCL_1(\Delta) \quad \text{induced by } E_y/E'_y \xrightarrow{\text{multiplication by }f(-,y)} \tE'_y/E''_y
\end{align*}
These maps of line bundles vanish precisely when $x=y$ and $E/E'' \cong \BC_x \oplus \BC_x$ (the latter condition being precisely equivalent to $f$ annihilating the length 2 quotient sheaf $E/E''$), i.e. on the locus \text{Exc}. 

\end{proof}

\begin{proof} \emph{of Proposition \ref{prop:blow up}:} Recall from the proof of Lemma \ref{lem:weil} that $\pi^\uparrow$ is an isomorphism outside of $Z$, while
$$
\pi^{\uparrow,-1}(Z) = \text{Exc}
$$
Since we have seen in Lemma \ref{lemma:divisors on y} that \text{Exc} is a Cartier divisor, there exists a morphism
$$
\fY_{d,d+1,d+2} \xrightarrow{\nu} \text{Bl}_Z\quot_{d,d+1,d+2}
$$
The blow-up on the right is smooth (seeing as how both $\quot_{d,d+1,d+2}$ and $Z$ are smooth, see Lemma \ref{lem:divisor}). The morphism $\nu$ is a bijection on closed points, because the fibers of both domain and codomain of $\nu$ over any point of $Z$ is a copy of $\BP(\Gamma(C,E/E''))$. We thus conclude that $\nu$ is an isomorphism, as we needed to show.
    
\end{proof}

As an immediate consequence of Proposition \ref{prop:blow up}, we have the formulas
\begin{equation}
\label{eqn:birational}
\pi^\uparrow_*(1) = \pi^\downarrow_*(1) = 1 
\end{equation}
as well as
\begin{equation}
\label{eqn:exc div}
\pi^\uparrow_*(\varepsilon) = \pi^\downarrow_*(\varepsilon) = 0 
\end{equation}
where $\varepsilon = [\text{Exc}] \in H^2(\fY_{d,d+1,d+2})$ is the class of the exceptional divisor. The formulas above allow us to replace various cohomology classes on $\quot_{d,d+1,d+2}$ by their pull-backs to $\fY_{d,d+1,d+2}$; in light of \eqref{eqn:birational}, this replacement yields the same correspondences on quot schemes. We will use this approach in Subsection \ref{sub:e-e} to produce an alternative proof for relation \eqref{eqn:rel ee_quot}.

\subsection{The cohomology of $\fY_{d,d+1,d+2}$} Recall the line bundles $\{\CL_i,\tCL_i\}_{i \in \{1,2\}}$ of Subsection \ref{sub:line}, and let
$$
\lambda_i = c_1(\CL_i), \ \tlambda_i = c_1(\tCL_i) \in H^2(\fY_{d,d+1,d+2})
$$
for all $i \in \{1,2\}$. Since
$$
[\mathcal{L}_1] + [\mathcal{L}_2] = [ \widetilde{\mathcal{L}}_1] + [ \widetilde{\mathcal{L}}_2] \in K(\fY_{d,d+1,d+2})
$$
then we have the identities
\begin{equation}
\label{eqn:chern y}
\lambda_1+\lambda_2 = \tlambda_1 + \tlambda_2 \quad \text{and} \quad \lambda_1\lambda_2 = \tlambda_1 \tlambda_2
\end{equation}
Moreover, Lemma \ref{lemma:divisors on y} immediately implies the identity 
\begin{equation}
\label{eqn:divisors on y}
\tlambda_2 - \lambda_1 = \lambda_2 - \tlambda_1 = \delta - \varepsilon
\end{equation}
in $\fY_{d,d+1,d+2}$, where $\delta$ is the class of the diagonal $\{x=y\}$ and $\varepsilon$ is the class of \text{Exc} (note that $\delta - \varepsilon = [\text{Diag}]$, as an immediate consequence of \eqref{eqn:two divisors}).

\begin{proposition}
\label{prop:h4}

We have the following identity in $H^4(\fY_{d,d+1,d+2})$
\begin{equation}
\label{eqn:divisors on y squared}
(\lambda_i - \tlambda_1)(\lambda_i - \tlambda_2) = (\lambda_1 - \tlambda_i)(\lambda_2-\tlambda_i) = 0
\end{equation}
for all $i \in \{1,2\}$.

\end{proposition}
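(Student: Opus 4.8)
The plan is to deduce \eqref{eqn:divisors on y squared} formally from the elementary symmetric identities \eqref{eqn:chern y}. Those identities assert precisely that the pairs $\{\lambda_1,\lambda_2\}$ and $\{\tlambda_1,\tlambda_2\}$ have the same first and second elementary symmetric polynomials in $H^*(\fY_{d,d+1,d+2})$, i.e.\ that the monic quadratic $Q(t) := t^2 - (\lambda_1+\lambda_2)t + \lambda_1\lambda_2 \in H^*(\fY_{d,d+1,d+2})[t]$ admits the two factorizations $Q(t) = (t-\lambda_1)(t-\lambda_2) = (t-\tlambda_1)(t-\tlambda_2)$. (Recall from Subsection~\ref{sub:line} that \eqref{eqn:chern y} itself comes from the $K$-theory equality $[\CL_1]+[\CL_2] = [\tCL_1]+[\tCL_2]$, which in turn follows from the two composition series $E'' \subset E' \subset E$ and $E'' \subset \tE' \subset E$ of the relative length~$2$ sheaf $E/E''$.)

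Granting this, I would obtain the first equality of \eqref{eqn:divisors on y squared} simply by evaluating the factorization $Q(t) = (t-\tlambda_1)(t-\tlambda_2)$ at $t = \lambda_i$, that is:
\begin{align*}
(\lambda_i - \tlambda_1)(\lambda_i - \tlambda_2) &= \lambda_i^2 - \lambda_i(\tlambda_1+\tlambda_2) + \tlambda_1\tlambda_2 \\
&= \lambda_i^2 - \lambda_i(\lambda_1+\lambda_2) + \lambda_1\lambda_2 = (\lambda_i-\lambda_1)(\lambda_i-\lambda_2),
\end{align*}
where the middle step uses \eqref{eqn:chern y}; this vanishes for $i \in \{1,2\}$ because one of the two factors is then zero. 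Symmetrically, evaluating $Q(t) = (t-\lambda_1)(t-\lambda_2)$ at $t = \tlambda_i$ and again invoking \eqref{eqn:chern y} gives $(\lambda_1 - \tlambda_i)(\lambda_2 - \tlambda_i) = (\tlambda_i-\tlambda_1)(\tlambda_i-\tlambda_2) = 0$, which is the second equality.

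There is essentially no obstacle here: the statement is a one-line algebraic consequence of \eqref{eqn:chern y}. The only point meriting a moment's care is that \eqref{eqn:chern y} be an identity of genuine cohomology classes rather than a merely set-theoretic statement, which is ensured once $[\CL_1]+[\CL_2] = [\tCL_1]+[\tCL_2]$ is verified in $K(\fY_{d,d+1,d+2})$ via the pertinent short exact sequences of universal sheaves. One could instead grind \eqref{eqn:divisors on y squared} out of \eqref{eqn:divisors on y} by writing each $\lambda_i - \tlambda_j$ in terms of $\delta - \varepsilon$ and $\lambda_1 - \lambda_2$, but that is a messier route relying on the same input.
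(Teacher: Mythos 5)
Your proof is correct, and it takes a genuinely cleaner route than the paper's. The paper argues geometrically: it observes that $\CL_i$ and $\tCL_i$ are isomorphic when restricted to the divisor $\text{Diag}$, yielding $[\text{Diag}]\,(\lambda_i - \tlambda_i)=0$, and then substitutes $[\text{Diag}]=\delta-\varepsilon=\tlambda_2-\lambda_1=\lambda_2-\tlambda_1$ from \eqref{eqn:divisors on y} to produce \eqref{eqn:divisors on y squared}. You bypass both the divisor relation \eqref{eqn:divisors on y} and the geometric restriction argument entirely: \eqref{eqn:chern y} already says that $\{\lambda_1,\lambda_2\}$ and $\{\tlambda_1,\tlambda_2\}$ are ``roots'' of the same monic quadratic over the (commutative, even-degree) ring $H^{*}(\fY_{d,d+1,d+2})$, and evaluating the two factorizations of that quadratic at $\lambda_i$ (resp.\ $\tlambda_i$) gives the vanishing immediately. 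What your approach buys is economy and robustness: the result is seen to follow from the $K$-theory identity $[\CL_1]+[\CL_2]=[\tCL_1]+[\tCL_2]$ alone, without reference to the fine structure of the divisors $\text{Exc}$ and $\text{Diag}$. What the paper's proof buys is conceptual alignment with the rest of that section, where the divisor relations \eqref{eqn:divisors on y} are needed anyway (for instance in the alternative proof of \eqref{eqn:rel ee_quot coefficient} that follows). One small nitpick: your closing remark that an alternative deduction from \eqref{eqn:divisors on y} would ``rely on the same input'' is slightly imprecise --- \eqref{eqn:divisors on y} carries the extra geometric information that $\lambda_2-\tlambda_1$ equals $\delta-\varepsilon$, which your argument does not use --- but this does not affect the correctness of your proof.
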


\begin{proof}  Because the line bundles $\CL_1$ and $\tCL_1$ (respectively $\CL_2$ and $\tCL_2$) are isomorphic on $\text{Diag}$, we obtain the identity
\begin{equation}
\label{eqn:identity with delta}
[\text{Diag}](\lambda_1 - \tlambda_1) = [\text{Diag}](\lambda_2 - \tlambda_2) = 0
\end{equation}
Substituting $\tlambda_2 - \lambda_1$ or $\lambda_2 - \tlambda_1$ for $[\text{Diag}] = \delta - \varepsilon$ in the above formula, which we are allowed to do because of \eqref{eqn:divisors on y}, yields formula \eqref{eqn:divisors on y squared}.

\end{proof}

\subsection{An alternative proof of the commutator of $e(z)$ and $e(w)$}
\label{sub:e-e}

We will now use the language of quadruple moduli spaces to produce an alternate proof of formula \eqref{eqn:rel ee_quot coefficient}, specifically the following equality for all $k,l \geq 0$ (see \eqref{eqn:rel ee_quot coefficient}):
\begin{equation}
\label{eqn:ee explicit}
e_{k+1} e_l - e_k e_{l+1} + \delta e_ke_l = e_l e_{k+1} - e_{l+1} e_k - \delta e_l e_k
\end{equation}
As a correspondence, the LHS of \eqref{eqn:ee explicit} is given by the cohomology class
$$
\lambda_1^k\lambda_2^l (\lambda_1-\lambda_2+\delta) \in H^*(\quot_{d,d+1,d+2})
$$
where $\lambda_1 = c_1(\CL_1)$ and $\lambda_2 = c_1(\CL_2)$. By \eqref{eqn:birational} and \eqref{eqn:exc div}, we can equivalently say that the LHS of \eqref{eqn:ee explicit} is given by the correspondence
\begin{equation}
\label{eqn:the left}
\lambda_1^k\lambda_2^l (\lambda_1-\lambda_2+\delta-\varepsilon) \in H^*(\fY_{d,d+1,d+2})
\end{equation}
Similarly, the RHS of \eqref{eqn:ee explicit} is given by the cohomology class
\begin{equation}
\label{eqn:the right}
\tlambda_1^l\tlambda_2^k (\tlambda_2-\tlambda_1-\delta+\varepsilon) \in H^*(\fY_{d,d+1,d+2})
\end{equation}
It therefore suffices to prove that the cohomology classes \eqref{eqn:the left} and \eqref{eqn:the right} are equal on $\fY_{d,d+1,d+2}$. However, this follows from the chain of equalities
$$
\text{the class \eqref{eqn:the left}} \stackrel{\eqref{eqn:divisors on y}}= \lambda_1^k \lambda_2^l(\tlambda_2-\lambda_2) \stackrel{\eqref{eqn:divisors on y squared}}= \tlambda_2^k \tlambda_1^l(\tlambda_2-\lambda_2) \stackrel{\eqref{eqn:divisors on y}}= \text{the class \eqref{eqn:the right}}
$$

\begin{remark}

The moduli space of quadruples can also be used to provide geometric proofs of relations \eqref{eqn:commute a intro} and \eqref{eqn:rel ef_quot}. Indeed, in the situation where quot schemes on curves are replaced by moduli spaces of sheaves on surfaces, the analogous spaces of quadruples have been used to obtain interesting relations at the level of $K$-theory groups (\cite{negut4}) and derived categories (\cite{Yu}).

\end{remark}


\begin{thebibliography}{EGL}

\bibitem[AJLOP] {ajlop}
N. Arbesfeld, D. Johnson, W. Lim, D. Oprea, R. Pandharipande, {\it The virtual $K$-theory of Quot schemes of surfaces,} J. Geom. Phys. 164 (2021). 

\bibitem [Ba] {baranovsky}
V. Baranovsky, {\it Moduli of sheaves on surfaces and action of the oscillator algebra,}  J. Diff. Geom. 55 (2000), 193 - 227. 

\bibitem [Be] {bertram}
A. Bertram, {\it Towards a Schubert Calculus for Maps from a Riemann Surface to a Grassmannian}, Internat. J.
Math 5 (1994), 811 - 825.

\bibitem [BDW]{bdw}
A. Bertram, G. Daskalopoulos, R. Wentworth, {\it Gromov Invariants for holomorphic maps from Riemann surfaces to Grassmannians,} J. Amer. Math. Soc. 9 (1996), no 2, 529 - 571.

\bibitem [BFP]{bfp}
M. Bagnarol, B. Fantechi, F. Perroni, {\it On the motive of zero-dimensional Quot schemes on a curve}, New York J. Math. 26 (2020), no. 2020, 138 - 148.

\bibitem [BGL] {bgl}
E. Bifet, F. Ghione, M. Letizia, {\it On the Abel-Jacobi map for divisors of higher rank on a curve,} Math. Ann. 299 (1994), no. 1, 641 - 672.

\bibitem[BCS] {bcs}
T. Braden, L. Chen, F. Sottile, {\it The equivariant Chow rings of Quot schemes,} Pac. J. Math. 238 (2008), 201 - 232. 

\bibitem[BFFR]{bffr}
A. Braverman, B. Feigin, M.  Finkelberg, L. Rybnikov, {\it A Finite Analog of the AGT Relation I: Finite W-Algebras and Quasimaps’ Spaces} Commun. Math. Phys. 308, 457–478 (2011). 

\bibitem[BK]{bk}
J. Brundan, A. Kleshchev, {\it Representations of shifted Yangians and finite $W$-algebras}, Mem. Amer. Math. Soc. 196 (2008), no.918, viii+107.MR2456464 (2009i:17020).

\bibitem [C] {chen}
L. Chen, {\it Poincar\'e polynomials of hyperquot schemes}, Math. Ann. 321 (2001), 235 - 251.

\bibitem[CK]{ck}
I. Ciocan-Fontanine, M. Kapranov, {\it Derived Quot schemes}, Ann. Sci. École Norm. Sup. (4) 34 (2001), no. 3, 403–440.

\bibitem [FFFR]{fffr}
B. Feigin, M. Finkelberg, I. Frenkel, L. Rybnikov, {\it Gelfand-Tsetlin algebras and cohomology rings of Laumon spaces,} Selecta Math. (N.S.) 17 (2011), no. 2, 337 - 361.

\bibitem [FK]{fk}
M. Finkelberg, A. Kuznetsov, {\it Global Intersection Cohomology of Quasimaps’ spaces,}  Int. Math. Res. Not. 7 (1997), 301 - 328.

\bibitem [G\"{o}] {gottsche}
L. G\"{o}ttsche, {\it The Betti numbers of the Hilbert scheme of points on a smooth projective surface,} Math. Ann. 286 (1990), 193 - 208. 

\bibitem [Gr]{grojnowski}
I. Grojnowski, {\it Instantons and affine algebras I. The Hilbert scheme and vertex operators}, Math. Res. Lett. 3 (1996), no. 2. 

\bibitem[Ka]{kaushik}
A. Kaushik, {\it The cohomology of Hyperquot schemes on curves via shifted Yangians in type A,} ar$\chi$iv:2603.16691.

\bibitem[Ki]{kivinen}
O. Kivinen, {\it Hecke correspondences for Hilbert schemes of reducible locally planar curves,} Algebr. Geom. 6 (2019), no. 5, 530–547.

\bibitem [L]{lehn}
M. Lehn, {\it Chern classes of tautological sheaves on Hilbert schemes of points on surfaces,} Invent. Math. 136 (1999), no. 1, 157 - 207.

\bibitem [Lim]{lim} 
W. Lim, {\it Virtual $\chi_{-y}$ genera of Quot schemes on surfaces,} J. Lond. Math. Soc. 104 (2021). 

\bibitem [M] {alina}
A. Marian, {\it On the intersection theory of Quot schemes and moduli of bundles with sections,} J. reine angew.
Math. 610 (2007), 13 - 27.

\bibitem [MO1] {mo1}
A. Marian, D. Oprea, {\it Virtual intersections on the Quot scheme and Vafa-Intriligator formulas}, Duke Math. J. 136 (2007), 81 - 131.

\bibitem [MO2]{mo2}
A. Marian, D. Oprea, {\it Counts of maps to Grassmannians and intersections on the moduli space of bundles,}
J. Diff. Geom. 76 (2007), 155 - 175.

\bibitem [MO3] {mo3}
A. Marian, D. Oprea, {\it The level-rank duality for nonabelian theta functions,} Invent. Math. 168 (2007), 225-247. 

\bibitem [MOS]{mos}
A. Marian, D. Oprea, S. Sam,  {\it On the cohomology of tautological bundles over the Quot scheme of curves,}  Algebra Number Theory (2026).

\bibitem [Na] {nakajima}
H. Nakajima, {\it Heisenberg algebra and Hilbert schemes of points on projective surfaces,} Ann. Math. 145, No. 2 (1997), 379 - 388.

\bibitem [Ne1] {negut1}
A. Negu\cb{t}, {\it Moduli of flags of sheaves and their $K$-theory,} Algebr. Geom. 2 (2015), 19 - 43. 

\bibitem [Ne2]{negut2}
A. Negu\cb{t}, {\it Shuffle algebras associated to surfaces,} Selecta Math. 25 (2019), no.3. 

\bibitem [Ne3]{negut3}
A. Negu\cb{t}, {\it $W$–algebras associated to surfaces,}
Proc. London Math. Soc. (3) 124 (2022), 601 - 679. 

\bibitem [Ne4]{negut4}
A. Negu\cb{t}, {\it  Hecke correspondences for smooth moduli spaces of sheaves,}
Publ. Math. Inst. Hautes \'{E}tudes Sci. 135 (2022), 337 - 418. 

\bibitem [OP] {op}
D. Oprea, R. Pandharipande, {\it Quot schemes of curves and surfaces: virtual classes, integrals, Euler characteristics}, Geom. Topol. 25 (2021), 3425 - 3505.

\bibitem[OS] {os}
D. Oprea, S. Sinha, {\it Euler characteristics of tautological bundles over Quot schemes of curves,} Adv. Math. 418 (2023). 

\bibitem[Po]{polishchuk}
A. Polishchuk, {\it Algebraic cycles on the relative symmetric powers and on the relative Jacobian of a family of curves. I}, Selecta Math. 13 (2008), 531 - 569.

\bibitem[Re]{rennemo}
J. Rennemo, {\it Homology of Hilbert schemes of points on a locally planar curve}, J. Eur. Math. Soc. (JEMS) 20 (2018), no. 7, 1629–1654.

\bibitem [R] {ricolfi}
A. Ricolfi, {\it On the motive of the Quot scheme of finite quotients of a locally free sheaf}, J. Math. Pures Appl. 144 (2020), 50 - 68.

\bibitem [St]{stark}
S. Stark, {\it Cosection localization and the Quot scheme $\quot^\ell_S (\mathcal E)$}, Proc. R. Soc. A 478 (2022).

\bibitem [S]{stromme}
S. Str{\o}mme, {\it On parametrized rational curves in Grassmann varieties}, Space curves, Rocca di Papa, 1985, Lecture Notes in Math. 1266 (1987), 251 - 272.

\bibitem [W]{witten}
E. Witten, {\it The Verlinde algebra and the cohomology of the Grassmannian,} Conf. Proc. Lect. Notes Geom. Topol. vol. 4, p. 357 - 422. Internat. Press, Cambridge, MA (1995).

\bibitem [Z]{Yu}
Y. Zhao, {\it Moduli space of sheaves and categorified commutator of functors,} ar$\chi$iv:2112.12434.

\end{thebibliography}
\end{document}